\newtheorem{thm}{Theorem}
\newtheorem*{thm*}{Theorem}
\newtheorem{defn}{Definition}
\newtheorem*{defn*}{Definition}
\newtheorem{claimm}{Claim}
\newtheorem{lemma}{Lemma}
\newtheorem*{cor*}{Corollary}
\newcommand{\drawat}[3]{\makebox[0pt][l]{\raisebox{#2}{\hspace*{#1}#3}}}
\newcommand{\N}{\mathbb{N}}
\newcommand{\R}{\mathbb{R}}
\newcommand{\Q}{\mathbb{Q}}
\newcommand{\iso}{\mathrm{iso}}
\newcommand{\funct}[2]{#1 \longrightarrow #2}
\newcommand{\restrict}[2]{#1\mspace{-2mu}\mathbin{\upharpoonright}\mspace{-1mu} #2}
\newcommand{\E}{\mathcal{E}}
\newcommand{\Ur}{\textbf{U}}
\newcommand{\s}{\mathbb{S}^{\infty}}
\newcommand{\dom}{\mbox{$\mathrm{dom}$}}
\newcommand{\m}[1]{\textbf{#1}}
\newcommand{\mc}[1]{\widetilde{\textbf{#1}}}
\author{L. Nguyen Van Th\'{e} and N. W. Sauer}
\address{Department of Mathematics and Statistics, University of Calgary, 2500 University Drive NW, Calgary, Alberta, Canada, T2N1N4}
\address{and}
\address{Institut de math\'ematiques, Universit\'e de Neuch\^atel, Emile-Argand 11, 2000 Neuch\^atel, Switzerland}
\email{nguyen@math.ucalgary.ca}
\address{Department of Mathematics and Statistics, University of Calgary, 2500 University Drive NW, Calgary, Alberta, Canada, T2N1N4.}
\email{nsauer@math.ucalgary.ca}
\title{Some weak indivisibility results in ultrahomogeneous metric spaces.}
\subjclass[2000]{Primary: 03E02. Secondary: 05C55, 05D10, 51F99}
\keywords{Ramsey theory, Metric geometry, Urysohn metric space}
\date{January, 2010}
\begin{document}

\begin{abstract}
We study the validity of a partition property known as weak indivisibility for the integer and the rational Urysohn metric spaces. We also compare weak indivisiblity to another partition property, called age-indivisibility, and provide 
an example of a countable ultrahomogeneous metric space which may be age-indivisible but not weakly indivisible.

\end{abstract}

\maketitle

\section{Introduction.}

The purpose of this article is the study of certain partition properties of particular metric spaces, called \emph{ultrahomogeneous}. A metric space $\m{X}$ is ultrahomogeneous when every isometry between finite
metric subspaces of $\m{X}$ can be extended to an isometry of $\m{X}$ onto itself. For example, when seen as a metric space, any Euclidean space $\R^n$ has this property. So do the separable infinite dimensional Hilbert space $\ell_2$ and its unit sphere $\s$. Another less known example of ultrahomogeneous metric space, though recently a well studied object (see \cite{Pr}), is the \emph{Urysohn space}, denoted $\Ur$: up to isometry, it is the unique complete separable ultrahomogeneous metric space into which every separable metric space embeds (here and in the rest of the paper, all the embeddings are \emph{isometric}, that is, distance preserving). This space also admits numerous countable analogs. For example, for various countable sets $S$ of positive reals (see \cite{DLPS} for the precise condition on $S$), there is, up to isometry, a unique countable ultrahomogeneous metric space into which every countable metric space with distances in $S$ embeds. When $S=\Q$ or $\N$ this gives raise to the  spaces denoted respectively $\Ur_{\Q}$ (the \emph{rational Urysohn space}) and $\Ur_{\N}$ (the \emph{integer Urysohn space}). Recently, separable ultrahomogeneous metric spaces have been at the center of active research because of a remarkable connection between their combinatorial behavior when submitted to finite partitions and the dynamical properties of their isometry group. For example, consider the following result. Call a metric space $\m{Z}=(Z,d^{\m{Z}})$ \emph{age-indivisible} if for every finite metric subspace $\m{Y}$ of $\m{Z}$ and every partition $Z = B\cup R$ of the underlying set $Z$ of $\m{Z}$ (thought as a coloring of the points of $Z$ with two colors, blue and red), the space $\m{Y}$ embeds in $B$ or $R$ (here and in the whole paper, boldface characters will refer to metric structures while lightface characters will refer to the corresponding underlying sets).

\begin{thm*}[Folklore]
The spaces $\Ur_{\Q}$ and $\Ur_{\N}$ are age-indivisible.
\end{thm*}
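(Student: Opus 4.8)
The plan is to reduce age-indivisibility to a finite Ramsey statement and then to resolve that statement via the Hales--Jewett theorem. Fix $S \in \{\N,\Q\}$, let $\m{Y}$ be a finite metric subspace of $\Ur_S$ with underlying set $\{y_1,\dots,y_n\}$ and metric $d$, and let $\Ur_S = B \cup R$ be a partition. Since $\Ur_S$ is universal for the countable metric spaces with distances in $S$, it is enough to produce a \emph{single} finite metric space $\m{W}$ with distances in $S$ with the property that every colouring of its points with two colours contains a monochromatic isometric copy of $\m{Y}$. Indeed, fixing an isometric embedding of $\m{W}$ into $\Ur_S$ and restricting the given colouring to the image of $\m{W}$ yields such a two-colouring; a monochromatic copy of $\m{Y}$ then lies entirely inside $B$ or entirely inside $R$, witnessing that $\m{Y}$ embeds in one of the two parts.

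To build $\m{W}$, I would take a power of $\m{Y}$ under the max metric. For $k \in \N$ let $\m{W}_k$ have underlying set $\{y_1,\dots,y_n\}^k$ and distance
\[
 d^{\m{W}_k}\bigl((a_1,\dots,a_k),(b_1,\dots,b_k)\bigr) = \max_{1 \le i \le k} d(a_i,b_i).
\]
This is readily checked to be a metric, and every nonzero value it takes is a maximum of nonzero values of $d$; as $S = \N$ or $S = \Q$ is closed under the binary maximum, $\m{W}_k$ is a finite metric space with distances in $S$ and hence embeds into $\Ur_S$. The crucial point is that $\m{W}_k$ is saturated with isometric copies of $\m{Y}$ arising as \emph{combinatorial lines}: given any word $w \in (\{y_1,\dots,y_n\}\cup\{\ast\})^k$ with at least one wildcard coordinate $\ast$, the map sending $y_\lambda$ to the tuple obtained from $w$ by substituting $y_\lambda$ for every $\ast$ is an isometry of $\m{Y}$ onto its image, since the fixed coordinates contribute $0$ to the max while each active coordinate contributes exactly $d(y_\lambda,y_\mu)$.

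It now remains only to invoke the Hales--Jewett theorem: there is $k = k(n)$ such that every two-colouring of $\{y_1,\dots,y_n\}^k$ admits a monochromatic combinatorial line. With this $k$, the space $\m{W}_k$ is exactly the finite Ramsey object required in the first paragraph, and the argument is complete; the same scheme handles any finite number of colours. The routine verifications --- that the max metric satisfies the triangle inequality and takes values in $S$, and that combinatorial lines really are isometric copies of $\m{Y}$ --- present no difficulty, the closure of $S$ under maxima being the only place the specific choice $S \in \{\N,\Q\}$ enters. Thus the entire combinatorial content is carried by Hales--Jewett, and the only genuine step is the recognition that the max-metric power converts copies of $\m{Y}$ into combinatorial lines; once the problem is recast this way there is essentially no obstacle.
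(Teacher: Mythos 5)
Your proof is correct. There is nothing in the paper to compare it against: the statement is quoted as folklore and the authors give no argument for it, so your write-up stands as a self-contained proof, and it is in fact one of the standard folklore routes. All the steps check out. The reduction in your first paragraph uses only universality of $\Ur_S$, not ultrahomogeneity, which is worth noticing. The max-metric power $\m{W}_k$ is indeed a metric space, and its nonzero distances all already occur as distances of $\m{Y}$ --- so your appeal to ``closure of $S$ under binary maxima'' is vacuously satisfied (the maximum of finitely many elements of any set of reals is one of those elements), which means your argument actually proves age-indivisibility of $\Ur_S$ for \emph{every} distance set $S$ for which the ultrahomogeneous universal space $\Ur_S$ exists, not just $S \in \{\N, \Q\}$. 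The verification that a combinatorial line with nonempty active set is an isometric copy of $\m{Y}$ is exactly as you say: inactive coordinates contribute $0$ to the maximum, each active coordinate contributes $d(y_\lambda, y_\mu)$, and injectivity is automatic since substituted tuples differ in the wildcard positions. Hales--Jewett then delivers the monochromatic line, and, as you observe, the same argument handles any finite number of colours. For context, the other common route to this folklore result goes through Ne\v{s}et\v{r}il's Ramsey theorem for classes of finite metric spaces (cited in the paper for a much stronger purpose, namely colouring copies rather than points); your Hales--Jewett argument is considerably more elementary, trading the heavy structural Ramsey machinery for the single classical pigeonhole-type theorem, at the cost of producing a Ramsey witness $\m{W}_k$ with no ambient ultrahomogeneity --- which is all that age-indivisibility requires.
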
 

There are at least two directions for possible generalizations. First, one may ask what happens if instead of coloring the points of, say, the space $\Ur_{\Q}$, we color the isometric copies of a fixed finite metric subspace $\m{X}$ of $\Ur_{\Q}$. We will not touch this subject here but Kechris, Pestov and Todorcevic showed in \cite{KPT} that the answer to this question (obtained by Ne\v{s}et\v{r}il in \cite{N1}) has spectacular consequences on the groups $\iso(\Ur_{\Q})$ and $\iso(\Ur)$ of surjective self-isometries of $\Ur_{\Q}$ and $\Ur$. For example, every continuous action of $\iso(\Ur)$ (equipped with the pointwise convergence topology) on a compact topological space admits a fixed point. 

Another direction of generalization is to ask whether any of those spaces is \emph{indivisible}, that is, whether $B$ or $R$ necessarily contains not only a copy of a fixed finite $\m{Y}$ but of the whole space itself. However, it is known that any indivisible metric space must have a bounded distance set. Therefore, the spaces $\Ur_{\Q}$ and $\Ur_{\N}$ are not indivisible. Still, in this article, we investigate whether despite this obstacle, a partition result weaker than indivisibility but stronger than age-indivisibility holds. Call a metric space $\m{X}$ \emph{weakly indivisible} when for every finite metric subspace $\m{Y}$ of $\m{X}$ and every partition $X = B \cup R$, either $\m{Y}$ embeds in $B$ or $\m{X}$ embeds in $R$. Building on techniques developed in \cite{LANVT} and \cite{NVTS}, we prove: 





\begin{thm}
\label{thm:Uomega}
The space $\Ur_{\N}$ is weakly indivisible. 
\end{thm}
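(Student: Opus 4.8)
The plan is to build, inside the red part $R$, a countable set $Z$ that carries the one-point extension property characterizing $\Ur_{\N}$: that for every finite $F\subseteq Z$ and every integer-valued Kat\v{e}tov function $g$ on $F$ (one for which $F$ together with a new point at the prescribed distances is again a metric space), some point of $Z$ realizes $g$. Since $\Ur_{\N}$ is the Fra\"{\i}ss\'{e} limit of the finite integer metric spaces, any countable integer metric space with this property is isometric to $\Ur_{\N}$, so producing such a $Z\subseteq R$ immediately gives $\Ur_{\N}\hookrightarrow R$, the second alternative in the definition of weak indivisibility. I would construct $Z$ by a one-point recursion, enumerating all extension tasks and, at each stage, adjoining to the current finite red configuration $W$ a red point realizing the required distances.

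The decisive difficulty is that some tasks ask for a point very close to $W$ (a Kat\v{e}tov function with a small value), and closeness cannot be arranged by the naive device of sending the new point off to large distance: the set of all realizations of a small-valued $g$ is a bounded metric space, and there is no reason for it to meet $R$. To control this I would maintain the invariant that the current finite red configuration sits inside a red copy of a bounded Urysohn space $\Ur_{\{1,\ldots,M\}}$, with $M$ allowed to grow without bound along the construction. This is exactly the leverage needed: a red copy of $\Ur_{\{1,\ldots,M\}}$ internally realizes every Kat\v{e}tov function of value at most $M$ by one of its own (red) points, including those demanding small distances, so each task whose values are bounded by the current $M$ can be met inside $R$.

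The engine that keeps the invariant alive is the indivisibility of the bounded integer Urysohn spaces $\Ur_{\{1,\ldots,M\}}$ available through the techniques of \cite{NVTS}, together with the unboundedness of the ambient distance set. When a task forces the scale past the current $M$, I would first embed the committed finite red configuration $W$ into a copy of $\Ur_{\{1,\ldots,M'\}}$ for $M'$ larger than all values now in play, and then split that copy by the partition $B\cup R$. Indivisibility produces a monochromatic subcopy of $\Ur_{\{1,\ldots,M'\}}$; if that subcopy is ever blue then, choosing $M'\ge\mathrm{diam}(\m{Y})$ so that $\m{Y}\hookrightarrow\Ur_{\{1,\ldots,M'\}}$, the blue subcopy already contains a copy of $\m{Y}$ and we are finished through the first alternative $\m{Y}\hookrightarrow B$. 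Otherwise the subcopy is red and serves as the enlarged ambient copy for the next stage. Thus the dichotomy of weak indivisibility is built into the recursion: either some application lands a blue bounded copy and blue swallows $\m{Y}$, or every application returns red, the recursion runs forever, and it manufactures the desired red copy of $\Ur_{\N}$.

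The step I expect to be the real obstacle is precisely the one hidden in the previous paragraph: plain indivisibility hands back a monochromatic copy of $\Ur_{\{1,\ldots,M'\}}$ located somewhere in the ambient copy, but the recursion needs that red subcopy to contain the finite red set $W$ already committed, whose points cannot be relocated without unravelling the part of $Z$ constructed so far. Upgrading indivisibility to this rooted form, a monochromatic copy through a prescribed finite set, is the technical heart. I would attack it by decomposing the ambient copy into the finitely many distance-types over $W$, each of which is again a bounded Urysohn space to which indivisibility applies, and then coordinating the monochromatic choices across types so that their union together with $W$ is a genuine copy of $\Ur_{\{1,\ldots,M'\}}$; making this coordination respect the cross-type triangle constraints, and forcing the common colour to be red because $W$ is, is where the arguments of \cite{LANVT,NVTS} must be pushed hardest.
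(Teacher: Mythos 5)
Your high-level plan coincides with the paper's goal (build inside $R$ a countable set with the one-point extension property, so that it is forced to be isometric to $\Ur_{\N}$), and you correctly isolate the crux: Kat\v{e}tov maps with small values have bounded realization sets that need not meet $R$. But the mechanism you propose for this crux does not work. Your invariant --- the committed set $W$ sits inside a red copy of $\Ur_{\{1,\ldots,M\}}$ --- and the ``rooted indivisibility'' dichotomy needed to sustain it (either a red subcopy of $\Ur_{\{1,\ldots,M'\}}$ \emph{containing} $W$, or a blue subcopy somewhere, hence $\m{Y}$ blue) admit a third outcome and the dichotomy is in fact false. Take a single point $x_0 \in \Ur_{\N}$ and color blue exactly its unit sphere, $B=\{z : d^{\Ur_{\N}}(z,x_0)=1\}$. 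This $B$ is the orbit of the Kat\v{e}tov map $g(x_0)=1$ and hence, by the observation following Lemma \ref{lem:mono}, is isometric to $\Ur_2$; so no blue copy of $\m{Y}$ (nor of $\Ur_{M'}$) exists once $\mathrm{diam}(\m{Y})\geq 3$. Yet the red point $x_0$ lies in \emph{no} red copy of $\Ur_{\{1,\ldots,M\}}$ for any $M\geq 1$: any such copy internally realizes distance $1$ to $x_0$, and every point at distance $1$ from $x_0$ is blue. So if $x_0$ is ever committed to $W$ --- and nothing in your recursion steers commitments away from such traps; for this coloring your invariant cannot even be initialized at $W=\{x_0\}$ --- the enlargement step stalls with neither horn available. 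The obstruction is not the cross-type coordination you flag as the technical heart; it is that monochromatic copies through a \emph{prescribed} root simply need not exist when the coloring kills one small sphere around a committed point. Redness of small balls around a point is not a property one can force after the point exists.

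The paper's proof (Theorem \ref{thm:Uomega'}) repairs exactly this, by weakening the invariant and introducing a threshold. Fixing $p$ with $\Ur_p\not\hookrightarrow B$ and $m=\lceil p/2\rceil$, it never asks that the copy under construction be inside a red bounded Urysohn space; it only maintains that the $(m-1)$-ball around each committed point, \emph{intersected with a shrinking ambient copy} $\m{D}_n$ of $\Ur_{\N}$, is red --- and this is arranged at the moment each point is created, never retrofitted. Concretely, before choosing $\tilde{x}_{n+1}$ realizing the target map $f$, one passes (Lemma \ref{lem:mono}, plus Lemma \ref{lem:red} to shrink orbit domains) to a copy $\m{C}$ in which every orbit $O(g,\m{C})$, for $g$ within $m-1$ of $f$ pointwise and with $\min g\geq m$, is monochromatic; such orbits are isometric to $\Ur_{2m}\supseteq\Ur_p$ and hence forced red. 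Then any $y$ within $m-1$ of $\tilde{x}_{n+1}$ in the ambient copy is either at distance $\geq m$ from all earlier points, whence its distance function lies in the prepared family and $y$ sits in a red orbit, or close to an earlier point and red by the inductive invariant. Note that both arguments rest on the same machinery of \cite{NVTS} --- your invocation of indivisibility of $\Ur_{\{1,\ldots,M\}}$ is that paper's main theorem, while this paper imports its core lemmas directly --- but only the threshold-$m$ split between ``small distance: covered by a previously secured ball'' and ``large distance: orbit contains $\Ur_p$, so it cannot be blue'' closes the gap your proposal leaves open.
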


As for $\Ur_{\Q}$, we are not able to prove or disprove weak indivisibility but we obtain the following weakening as a consequence of Theorem \ref{thm:Uomega}. If $\m{X}$ is a metric space, $Y \subset X$ and $\varepsilon > 0$, $(Y)_{\varepsilon}$ denotes the set
\[(Y) _{\varepsilon} = \{ x \in X : \exists y \in Y \ \ d ^{\m{X}} (x,y) \leq \varepsilon \}.\]

\begin{thm}
\label{thm:UQ}
Let $U_{\Q} = B \cup R$ and $\varepsilon > 0$. Assume that there is a finite metric subspace $\m{Y}$ of $\Ur_{\Q}$ that does not embed in $B$. Then $\Ur_{\Q}$ embeds in $(R)_{\varepsilon}$. 
\end{thm}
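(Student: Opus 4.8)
The plan is to deduce Theorem~\ref{thm:UQ} from the integer case (Theorem~\ref{thm:Uomega}) by a change of scale, and then to ``thicken'' the resulting monochromatic copy. First I fix a scale adapted to $\m{Y}$: let $q_1,\dots,q_r$ be the finitely many distances occurring in $\m{Y}$ and choose an integer $N$ that is a common multiple of their denominators and large enough that $\delta:=1/N\leqslant\varepsilon$. Then every distance of $\m{Y}$ lies in $\delta\N=\{\delta,2\delta,3\delta,\dots\}$, so $\m{Y}$ is a finite subspace of $\Ur_{\delta\N}$. Since $\Ur_{\delta\N}$ is isometric to $\Ur_{\N}$ after rescaling the metric by $N$, and since $\delta\N\subseteq\Q$, I may fix an isometric copy $\mathbf{G}$ of $\Ur_{\delta\N}$ inside $\Ur_{\Q}$. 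The partition $\Ur_{\Q}=B\cup R$ induces a partition of $\mathbf{G}$, and because $\m{Y}$ does not embed in $B$ it does not embed in $B\cap\mathbf{G}$ either. Weak indivisibility is invariant under rescaling, so Theorem~\ref{thm:Uomega} applies to $\mathbf{G}$: either $\m{Y}$ embeds in $B\cap\mathbf{G}$ or $\mathbf{G}$ embeds in $R\cap\mathbf{G}$. The first possibility is excluded, so I obtain an isometric copy $\mathbf{R}_0$ of $\Ur_{\delta\N}$ with $\mathbf{R}_0\subseteq R$.

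It then suffices to prove the following thickening statement: \emph{$\Ur_{\Q}$ embeds into $(\mathbf{R}_0)_{\varepsilon}$}; since $(\mathbf{R}_0)_{\varepsilon}\subseteq(R)_{\varepsilon}$, this gives the theorem. I would prove it by a one-point-extension construction: enumerate $\Ur_{\Q}$ as $p_0,p_1,\dots$ and build an isometric map $\phi$ into $(\mathbf{R}_0)_{\varepsilon}$ together with red witnesses $w_n\in\mathbf{R}_0$ satisfying $d(\phi(p_n),w_n)\leqslant\varepsilon$. At stage $n$ one has the exact distances $t_{ni}=d(p_n,p_i)$ to realize. Using the extension property of $\mathbf{R}_0\cong\Ur_{\delta\N}$ one first places a candidate witness $w_n\in\mathbf{R}_0$ whose distances to the earlier witnesses are the $\delta$-roundings of the $t_{ni}$; then, using the extension property of $\Ur_{\Q}$, one places $\phi(p_n)$ at the exact distances $t_{ni}$ from the $\phi(p_i)$ and at distance $\leqslant\varepsilon$ from $w_n$. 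Because $\delta\leqslant\varepsilon$, the $\varepsilon$-slack is meant to absorb both the $\delta$-quantization of $\mathbf{R}_0$ and the distances of $\Ur_{\Q}$ that are smaller than $\delta$ (such clusters being placed inside a single $\varepsilon$-ball around a common witness). Carrying this out for every $n$ defines an isometric embedding of $\Ur_{\Q}$ into $(\mathbf{R}_0)_{\varepsilon}$.

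The main obstacle is precisely the verification that the extension step can be performed at every stage, i.e.\ that one can always find a witness $w_n\in\mathbf{R}_0$ close enough to the ``ideal'' realizer so that $\phi(p_n)$ can be placed within distance $\varepsilon$ of $\mathbf{R}_0$ while keeping the distances to the $\phi(p_i)$ exact. The delicate point is that the errors coming from the $\delta$-rounding and from the $\varepsilon$-offsets of the previously placed points must not accumulate over the infinitely many steps: a naive single-witness bookkeeping loses a factor and lets the offsets drift beyond $\varepsilon$. I expect to control this by taking $\delta$ much smaller than $\varepsilon$ and, crucially, by exploiting the abundance of realizers in $\mathbf{R}_0$. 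The set of points of $\mathbf{R}_0$ realizing a prescribed $\delta\N$-type over the previous witnesses is again a copy of a bounded Urysohn space, and among them one should be able to select $w_n$ whose distances to the already-placed points $\phi(p_i)$ are as close as required to the targets $t_{ni}$. Making this selection effective—equivalently, showing that $\mathbf{R}_0$ is sufficiently dense near the ideal realizer so that the offsets stay bounded by $\varepsilon$—is the technical heart of the argument, and is exactly where the hypothesis $\delta\leqslant\varepsilon$ and the homogeneity of $\mathbf{R}_0$ must be used.
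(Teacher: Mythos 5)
Your first paragraph (rescale so that the distances of $\m{Y}$ lie in $\delta\N$, transfer the colouring to a copy of $\Ur_{\delta\N}$ inside $\Ur_{\Q}$, and apply Theorem \ref{thm:Uomega} to extract a red copy) coincides with the paper's argument. But the second half has a genuine gap, and it is exactly where you located it. The statement ``$\Ur_{\Q}$ embeds into $(\mathbf{R}_0)_{\varepsilon}$'' is not something you can hope to prove for an \emph{arbitrary} red copy $\mathbf{R}_0$ of $\Ur_{\delta\N}$, and the recursion you sketch cannot close. The reason is that the one-point extension property of $\mathbf{R}_0$, which is all that ``$\mathbf{R}_0$ is isometric to $\Ur_{\delta\N}$'' gives you, controls only distances \emph{internal} to $\mathbf{R}_0$: it produces points of $\mathbf{R}_0$ realizing prescribed $\delta\N$-valued types over the earlier witnesses $w_0,\dots,w_{n-1}$, but says nothing about the distances from those realizers to points of $\Ur_{\Q}\setminus\mathbf{R}_0$, such as the previously placed $\phi(p_i)$. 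Your proposed remedy --- that the set of realizers of the rounded type is itself a copy of a bounded Urysohn space, so one can select among them --- is abundance only with respect to the internal metric, and it is consistent (one can build such a configuration inside $\Ur_{\Q}$ by constructing the copy and the external points simultaneously) that \emph{every} realizer of the internal type has distances to the $\phi(p_i)$ off from the targets $t_{ni}$ by nearly $\varepsilon+\delta/2$, which destroys the Kat\v{e}tov condition $|t_{ni}-\varepsilon'|\leq d(\phi(p_i),w_n)\leq t_{ni}+\varepsilon'$ needed to place $\phi(p_n)$ at exact distances from the $\phi(p_i)$ and within $\varepsilon'\leq\varepsilon$ of $w_n$. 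So the ``technical heart'' you defer is not a quantitative bookkeeping issue that a smaller $\delta$ fixes; it is a structural obstruction.

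The paper resolves this by reversing the order of quantifiers, which is the one idea missing from your proposal. Instead of extracting a red copy first and thickening it afterwards, Lemma \ref{lem:rich} (imported from Lemma 2 and Proposition 5 of \cite{LANVT}) constructs, \emph{before the colouring is consulted}, a special copy $\Ur_{\N/q}^{*}$ of $\Ur_{\N/q}$ inside $\Ur_{\Q}$ with the property that for \emph{every} subspace $\mc{V}$ of $\Ur_{\N/q}^{*}$ isometric to $\Ur_{\N/q}$, the neighborhood $(\tilde{V})_{1/2q}$ already contains a copy of $\Ur_{\Q}$; the candidate copies of $\Ur_{\Q}$ are woven into the construction of $\Ur_{\N/q}^{*}$ itself, so that whichever sub-copy turns out to be red comes with its nearby copy of $\Ur_{\Q}$ prefabricated. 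Your argument then runs verbatim inside $\Ur_{\N/q}^{*}$: the colouring induces a partition of $\Ur_{\N/q}^{*}$ in which $\m{Y}$ does not embed in $B$, weak indivisibility yields a red sub-copy $\mc{V}$, and $(\tilde{V})_{1/2q}\subset(R)_{\varepsilon}$ contains the desired copy. Without an analogue of Lemma \ref{lem:rich}, your proof is incomplete at its central step.
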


Another consequence of Theorem \ref{thm:Uomega} is the following partition result for $\Ur$.

\begin{thm}
\label{thm:U}
Let $U = B \cup R$ and $\varepsilon > 0$. Assume that there is a compact metric subspace $\m{K}$ of $\Ur$ that does not embed in $(B)_{\varepsilon}$. Then $\Ur$ embeds in $(R)_{\varepsilon}$.  
\end{thm}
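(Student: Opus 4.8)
The plan is to deduce Theorem~\ref{thm:U} for the complete Urysohn space $\Ur$ from the already-established Theorem~\ref{thm:UQ} for the rational Urysohn space $\Ur_{\Q}$, exploiting the fact that $\Ur$ is the metric completion of a copy of $\Ur_{\Q}$. First I would fix a copy of $\Ur_{\Q}$ sitting densely inside $\Ur$, and given the partition $\Ur = B \cup R$ together with $\varepsilon > 0$, I would restrict the coloring to $\Ur_{\Q}$, writing $\Ur_{\Q} = (B \cap \Ur_{\Q}) \cup (R \cap \Ur_{\Q})$. The goal is to transfer the hypothesis (some compact $\m{K}$ does not embed into $(B)_{\varepsilon}$) into the rational hypothesis of Theorem~\ref{thm:UQ} (some finite $\m{Y}$ does not embed into $B \cap \Ur_{\Q}$), apply that theorem to obtain a copy of $\Ur_{\Q}$ inside $(R \cap \Ur_{\Q})_{\varepsilon'}$ for a suitable smaller $\varepsilon'$, and then take completions to produce a copy of $\Ur$ inside $(R)_{\varepsilon}$.

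The key steps, in order, are as follows. I would first run a contrapositive/approximation argument to extract a \emph{finite} rational obstruction $\m{Y}$ from the \emph{compact} real obstruction $\m{K}$: choose a finite $\delta$-net $\m{Y}_0$ of $\m{K}$ for small $\delta$, approximate it by a finite metric space $\m{Y}$ with rational distances (embeddable in $\Ur_{\Q}$) that is within $\delta$ of $\m{Y}_0$, and argue that if such a $\m{Y}$ \emph{did} embed into $B \cap \Ur_{\Q}$, then by a perturbation argument (embedding the whole net and thickening) $\m{K}$ would embed into $(B)_{\varepsilon}$, contradicting the hypothesis. This gives a finite rational $\m{Y}$ not embedding into $B \cap \Ur_{\Q} \subseteq \Ur_{\Q}$. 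Second, I would apply Theorem~\ref{thm:UQ} with a strictly smaller parameter $\varepsilon' < \varepsilon$ to conclude that $\Ur_{\Q}$ embeds into $(R \cap \Ur_{\Q})_{\varepsilon'}$, where the thickening is taken inside $\Ur_{\Q}$. Third, I would take the metric completion of this embedded copy of $\Ur_{\Q}$: since $\Ur$ is (up to isometry) the completion of $\Ur_{\Q}$, the completion is a copy of $\Ur$, and I would verify that completing a set contained in $(R \cap \Ur_{\Q})_{\varepsilon'}$ lands inside $(R)_{\varepsilon}$, using that limit points are within $\varepsilon - \varepsilon'$ of the approximating sequence and hence within $\varepsilon$ of $R$.

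The main obstacle I expect is the careful bookkeeping of the $\varepsilon$'s across the three passages to and from the rationals and the completion. Concretely, one must ensure that the thickening constant degrades only in a controlled way: the net approximation of $\m{K}$ consumes some slack, the passage from rational distances to real distances consumes more, and the final completion step consumes the remaining gap $\varepsilon - \varepsilon'$. The subtle point is that being within $\varepsilon'$ of $R \cap \Ur_{\Q}$ inside $\Ur_{\Q}$ must be promoted to being within $\varepsilon$ of $R$ inside all of $\Ur$, which requires that the nearby red rational point remains red as a point of $\Ur$ (automatic, since $R \cap \Ur_{\Q} \subseteq R$) and that taking completions does not increase distances to $R$ beyond the allotted budget. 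A second, more delicate point is guaranteeing that the completion of the embedded copy of $\Ur_{\Q}$ is genuinely all of (an isometric copy of) $\Ur$ rather than a proper subspace; this rests on the characterization of $\Ur$ as the unique complete separable ultrahomogeneous space into which every separable metric space embeds, so I would confirm that the completed image retains the defining extension/universality property inherited from the dense copy of $\Ur_{\Q}$.
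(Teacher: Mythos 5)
Your overall route is sound and is packaged genuinely differently from the paper's. You deduce Theorem \ref{thm:U} from Theorem \ref{thm:UQ} by restricting the coloring to a dense copy of $\Ur_{\Q}$, applying Theorem \ref{thm:UQ} with a strictly smaller parameter $\varepsilon'<\varepsilon$, and then completing; the paper instead re-runs the proof of Theorem \ref{thm:UQ} inside $\Ur$ itself, folding the completion step into Lemma \ref{lem:rich'} (the $\Ur$-version of Lemma \ref{lem:rich}, obtained by exactly your completion observation) and invoking weak indivisibility of $\Ur_{\N/q}$ directly. Your $\varepsilon$-bookkeeping at the completion stage is correct: a copy of $\Ur_{\Q}$ inside $(R\cap \Ur_{\Q})_{\varepsilon'}$ has closure isometric to $\Ur$ --- no universality argument is needed, since completion is functorial for isometries and $\Ur$ is the completion of $\Ur_{\Q}$ --- and the closure lies in $(R)_{\varepsilon}$ because limit points are at distance $0$ from the copy and $R\cap\Ur_{\Q}\subseteq R$. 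What your route buys is economy: Theorem \ref{thm:U} becomes an honest corollary of Theorem \ref{thm:UQ}, whereas the paper repeats the argument with $\Ur$-adapted lemmas.

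The one place where your sketch is substantially thinner than it needs to be is the extraction of the finite rational obstruction, i.e.\ the implication ``if $\m{Y}$ embeds in $B$ then $\m{K}$ embeds in $(B)_{\varepsilon}$.'' This is precisely the content of the paper's Lemma \ref{lem:rich''}, and ``perturbation'' alone does not prove it: an arbitrary isometric copy $\mc{Y}\subset B$ of the rationalized space $\m{Y}$ is only \emph{approximately} a copy of the net $\m{Y}_0\subset \m{K}$, and ultrahomogeneity of $\Ur$ transports \emph{exact} copies only, so there is no off-the-shelf way to move a copy of $\m{K}$ next to $\mc{Y}$. The missing construction (which your phrase ``embedding the whole net and thickening'' gestures at without supplying) is to realize $\m{Y}$ and the net inside one auxiliary metric space: the paper builds the two-level space $X=Z\times\{0,1\}$, gluing $\m{Y}=(Z,\lceil d^{\m{Z}}\rceil_q)$ above the net $\m{Z}$ at distance $\varepsilon/2$, verifies the triangle inequality by hand, embeds $\m{X}$ in $\Ur$, and then, given \emph{any} copy $\mc{Y}$, extends the isometry $Y\to\tilde{Y}$ to a global isometry of $\Ur$ to plant an exact copy of $\m{Z}$ (and hence of $\m{K}$, by the same trick once more in Claim \ref{claim:K}) within $\varepsilon$ of $\mc{Y}$. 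Equivalently, you could glue $\m{Y}$ to all of $\m{K}$ along the net with slack $\delta/2$ and use finite injectivity of $\Ur$ to extend any embedding of $\m{Y}$ to the glued separable space. Either way, this amalgam-plus-homogeneity step is a required idea, not a routine approximation; once it is supplied, your proof goes through and is a clean alternative to the paper's.
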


Note that these results do not answer the following: for a countable ultrahomogeneous metric space is weak indivisibility a strictly stronger property than age-indivisibility? In the last section of this paper, we indicate an example of a countable ultrahomogeneous metric space which might be age-indivisible but not weakly indivisible. To our knowledge, this could be one of the first two known examples of a countable ultrahomogeneous relational structure witnessing that weak indivisibility and age-indivisiblity are distinct properties (the other example will appear in \cite{LNS}). Let $\E _{\Q}$ be the class of all finite metric spaces $\m{X}$ with distances in $\Q$ which embed isometrically into the unit sphere $\mathbb{S} ^{\infty}$ of $\ell _2$ with the property that $\{ 0_{\ell _2} \} \cup \m{X}$ is affinely independent. It is known that there is a unique countable ultrahomogeneous metric space $\s _{\Q}$ whose class of finite metric subspaces is exactly $\E _{\Q}$, and that the metric completion of $\s _{\Q}$ is $\s$ (for a proof, see \cite{NVT} or \cite{NVT'}). 

\begin{thm}
\label{thm:ageind'}
The space $\s _{\Q}$ is age-indivisible. 
\end{thm}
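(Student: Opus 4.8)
The plan is to reduce age-indivisibility to a finite partition statement about the age $\E_\Q$ and then to produce the required finite witnesses by geometric constructions on the sphere.

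\emph{Reduction.} Since $\s_\Q$ is the Fra\"{\i}ss\'e limit of $\E_\Q$, every member of $\E_\Q$ embeds isometrically into $\s_\Q$. Hence it suffices to prove: for every $\m{A} \in \E_\Q$ and every $k \in \N$ there is $\m{B} \in \E_\Q$ such that for every partition of $\m{B}$ into $k$ parts one part contains an isometric copy of $\m{A}$; in symbols $\m{B} \to (\m{A})^{1}_{k}$. Indeed, given a $k$-colouring of $\s_\Q$ and a finite $\m{Y} \in \E_\Q$, embed such a $\m{B}$ (with $\m{A}=\m{Y}$) into $\s_\Q$; the colouring restricted to the image of $\m{B}$ yields a monochromatic copy of $\m{Y}$ inside $\s_\Q$, and this settles general $k$ at once. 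First I would record the base cases (a single point is trivial, and an edge at rational distance $d$ is handled by a large equilateral set whenever $d<\sqrt{2}$, since then its Gram matrix stays positive definite for all sizes), in order to isolate exactly where the sphere geometry bites.

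\emph{Producing the witnesses.} The crucial difference with $\Ur_\Q$ is that members of $\E_\Q$ must be affinely independent together with the origin, i.e. realised by \emph{linearly independent} unit vectors; thus the usual metric-composition/Hales--Jewett construction, whose images are linearly dependent, is unavailable. Instead I would use operations that stay inside $\E_\Q$. If $a_1,\dots,a_n$ realise $\m{A}$ as a basis of $\R^n$, then the $n^N$ tensors $a_{x_1}\otimes\cdots\otimes a_{x_N}$, for $x\in\{1,\dots,n\}^N$, are linearly independent unit vectors with rational inner products $\prod_i \langle a_{x_i},a_{y_i}\rangle$, and each line that moves a single coordinate is an \emph{exact} copy of $\m{A}$. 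Similarly, a direct sum with a common tilt toward a fixed pole realises the composition of $\m{A}$ with a large equilateral space inside $\E_\Q$. Both operations pack many genuine copies of $\m{A}$ into a single $\m{B}\in\E_\Q$ while preserving rationality and affine independence, thereby reducing $\m{B}\to(\m{A})^{1}_{k}$ to a purely combinatorial partition statement about a controlled family of copies.

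The main obstacle is that these copies must \emph{interlock}: a family indexed by a product or a composition is defeated by any colouring that respects the product coordinate (colour each $\m{A}$-block, or each tensor factor, separately), so neither a single tensor power nor a single composition arrows $\m{A}$. The heart of the proof is therefore to force the interlocking while staying affinely independent, which is precisely what the cube embedding cannot do. I would achieve this by iterating the amalgamation inside the Fra\"{\i}ss\'e class $\E_\Q$ in the spirit of a partite construction, building $\m{B}$ in stages so that fixing the colour of one copy propagates to its neighbours and the hypergraph of copies acquires covering number larger than $k$; equivalently, one imports a Ramsey (or ordering/Ramsey-expansion) property for $\E_\Q$ and specialises it to one-element subsets. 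As a fallback I would attempt a concentration-of-measure argument in the completion $\s$ to locate a copy in the majority colour and then transfer it into $\s_\Q$ by ultrahomogeneity; here the delicate point is passing from an approximate copy on $\s$ to an exact, rational, linearly independent copy inside $\s_\Q$, which is exactly the difficulty the affine/rational structure imposes.
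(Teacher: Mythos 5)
Your opening reduction---age-indivisibility of $\s_\Q$ follows once one shows that every $\m{A}\in\E_\Q$ has a witness $\m{B}\in\E_\Q$ with $\m{B}\to(\m{A})^1_k$---is exactly how the paper proceeds. But your proposal stops precisely where the work lies. You correctly observe that tensor powers and compositions pack copies of $\m{A}$ into members of $\E_\Q$ yet are defeated by colourings respecting the product structure, and your substitute for them---``iterating the amalgamation \ldots in the spirit of a partite construction,'' or ``importing a Ramsey property for $\E_\Q$''---is a restatement of the goal, not an argument: the point-partition Ramsey property for spherical, affinely independent configurations is the nontrivial content of the theorem, no elementary partite construction for it is known, and the general sphere-Ramsey problem remains hard even in the vertex case. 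The paper closes this gap by invoking a deep theorem of Matou\v{s}ek and R\"odl: every affinely independent finite $\m{X}\subset\s$ of circumradius $r$ admits, for each $\alpha>0$, a finite $\m{Z}\subset\s$ of circumradius $r+\alpha$ with $\m{Z}\to(\m{X})^1_2$. The quantitative circumradius control is essential and your sketch never engages it: affine independence of $\{0_{\ell_2}\}\cup\m{Z}$ is \emph{equivalent} to circumradius $<1$, so the witness must be kept strictly inside the sphere. Your own base case already exposes the problem: equilateral sets handle an edge only when $d<\sqrt2$, while for $\sqrt2\le d<2$ arbitrarily large equilateral sets fail positive semidefiniteness, so even the two-point case of your programme needs a Frankl--R\"odl/Matou\v{s}ek--R\"odl-type input that you do not supply.

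A second missing step is the transfer into the age: the sphere-Ramsey witness has arbitrary real distances, and one must deform it into a member of $\E_\Q$ (rational distances, affine independence together with the origin) \emph{without destroying the copies of} $\m{Y}$. The paper does this with a stability argument based on Schoenberg's criterion (strict negative-type inequalities are open conditions, so small symmetric perturbations of the metric keep the space spherical and affinely independent), combined with a shrink-and-reinflate trick: apply Matou\v{s}ek--R\"odl to $(Y,d^{\m{Y}}-\varepsilon)$, then add $\varepsilon$ back to all distances of the witness---which simultaneously restores exact copies of $\m{Y}$, forces affine independence, and keeps the circumradius below $1$---and finally perturb only the distances not already rational, leaving the copies of $\m{Y}$ untouched. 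Your concentration-of-measure fallback cannot substitute for any of this: it yields only copies in $(B)_\varepsilon$ or $(R)_\varepsilon$, and you concede yourself that passing from approximate to exact copies is ``exactly the difficulty''; the distortion phenomenon of Odell and Schlumprecht, used in Section 5.2 of this very paper to prove that $\s_S$ is \emph{not} weakly indivisible, shows how badly naive measure-theoretic arguments on $\s$ can fail. In short, the frame of your proof is right, but its heart---the finite arrow statement inside the sphere with circumradius control, and the perturbation machinery returning the witness to $\E_\Q$---is absent.
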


The proof of this result requires the use of a deep theorem due to Matou\v{s}ek and R\"odl in Euclidean Ramsey theory. As for the negation of weak indivisibility of $\s _{\Q}$, its proof is conditioned by the validity of a strong form of the Odell-Schlumprecht distortion theorem in Banach space theory, see Section \ref{section:age not weak} for more details. 

The paper is organized as follows. In Section \ref{section:Uomega}, we prove Theorem \ref{thm:Uomega}. In Section \ref{section:U}, we prove Theorem \ref{thm:U}. Theorem \ref{thm:UQ} is proved in section \ref{section:UQ}, and 
Theorem \ref{thm:ageind'} is proved in Section \ref{section:age not weak}, where a discussion of weak indivisibility of $\s _{\Q}$ is also included. 

\

\textbf{Acknowledgements.} We would like to thank Thomas Schlumprecht for helpful discussions concerning the potential negation of weak indivisibility of $\s _{\Q}$. We would also like to express our gratitude to three anonymous referees, thanks to whom the quality of this paper was considerably improved. L. Nguyen Van Th\'e would like to acknowledge the support of the Department of Mathematics \& Statistics Postdoctoral Program at the University of Calgary, as well as the support of the Swiss National Fund via the postdoctoral fellowship \# 20-118014-1. N. W. Sauer was supported by NSERC of Canada Grant \# 691325.

\section{Proof of Theorem \ref{thm:Uomega}} \label{section:Uomega}

The purpose of this section is to prove Theorem \ref{thm:Uomega}. In fact, we prove a slightly stronger result. We mentioned in introduction that there are various countable sets $S$ of positive reals for which there is, up to isometry, a unique countable ultrahomogeneous metric space into which every countable metric space with distances in $S$ embeds. It can be proved that when $p\in \N$, the integer interval $\{1,\ldots,p \}$ is such a set. The corresponding countable ultrahomogeneous metric space is denoted $\Ur_p$. 

\begin{thm}
\label{thm:Uomega'}
Let $U_{\N} = B \cup R$. Assume that there is $p \in \N$ such that $\Ur_p$ does not embed in $B$. Then $\Ur_{\N}$ embeds in $R$. 
\end{thm}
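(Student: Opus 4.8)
The plan is to embed $\Ur_{\N}$ isometrically into $R$ by building the copy one point at a time, using as the main external input the indivisibility of the bounded integer Urysohn spaces: for every $q$ the space $\Ur_q$ is indivisible (see, e.g., \cite{DLPS}), so any partition of a copy of $\Ur_q$ leaves a copy of $\Ur_q$ in one of the two classes. First I would put the hypothesis in monotone form. Since $\Ur_p$ embeds in $\Ur_q$ whenever $q \geq p$, the assumption $\Ur_p \not\hookrightarrow B$ upgrades to $\Ur_q \not\hookrightarrow B$ for all $q \geq p$; hence for any copy $C \cong \Ur_q$ in $\Ur_{\N}$ with $q \geq p$, indivisibility forces $C \cap R$ to contain a copy of $\Ur_q$. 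Now enumerate $\Ur_{\N} = \{a_n : n \in \omega\}$ and build an isometry $f \colon \Ur_{\N} \to R$ recursively, writing $F_n = f(\{a_0, \dots, a_{n-1}\}) \subseteq R$. To place $a_n$ I must find a point of $R$ realising over $F_n$ the one-point distance pattern $t_n$ that $a_n$ makes over $\{a_0,\dots,a_{n-1}\}$ (transported by $f$). Thus everything reduces to a type-realisation statement, and the back-and-forth bookkeeping around it is routine.

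The clean case is that of a \emph{far} type, where $m := \min_{c \in F} t(c)$ satisfies $2m \geq p$. Here the set $S_t$ of all realisations of $t$ in $\Ur_{\N}$ already contains a copy of $\Ur_{2m}$: given any finite integer metric space with distances $\leq 2m$, glue it onto $F$ so that each of its points realises $t$; the only new triangle constraints are, for $c \in F$ and realisations $y, y'$, the inequality $d(y,y') \leq 2t(c)$, which holds because $d(y,y') \leq 2m \leq 2t(c)$, so the glued space is a metric space and embeds into $\Ur_{\N}$ over $F$. As $S_t$ has diameter $\leq 2m$ and $2m \geq p$, applying indivisibility of $\Ur_{2m}$ to this copy, together with $\Ur_{2m} \not\hookrightarrow B$, produces a copy of $\Ur_{2m}$ inside $S_t \cap R$; any of its points realises $t$ in $R$. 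I would record the stronger output: realising a far type over $F$ in fact places an entire copy of $\Ur_{2m}$ in $R$, all of whose points sit at the prescribed distances to $F$.

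The difficulty, and the step where I expect to need the machinery of \cite{LANVT} and \cite{NVTS}, is the \emph{close} case $2m < p$, where $t$ forces the new point near $F$. Then $S_t$ is only a copy of $\Ur_{2m}$ with $2m < p$, a space that may embed entirely into $B$; so $t$ can simply fail to be realised in $R$ over a badly chosen $F$, and one cannot hope to realise close types over arbitrary finite sets. The copy must instead be built forward-lookingly so that the sets $F_n$ that arise are never bad. Concretely, I would strengthen the recursion by maintaining the invariant that $F_n$ lies inside a genuine bounded copy $W_n \cong \Ur_{D_n}$ with $F_n \subseteq W_n \subseteq R$, where $D_n$ exceeds every distance that will be needed before the diameter of the partial copy is next enlarged. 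Any addition whose type takes values $\leq D_n$ — in particular every close addition — is then realised inside $W_n$, which, being an honest copy of $\Ur_{D_n}$ contained in $R$, realises all such types internally.

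The price of this invariant is the obligation to enlarge the ambient bounded copy: from $W_n \cong \Ur_{D_n}$ in $R$ one must produce $W_{n+1} \cong \Ur_{D_{n+1}}$ with $W_n \subseteq W_{n+1} \subseteq R$ and $D_{n+1} > D_n$. Plain indivisibility does not suffice, since it yields a monochromatic bounded copy somewhere rather than one extending the prescribed $W_n$; what is needed is a \emph{relative} form of indivisibility, letting a copy of $\Ur_{D_n}$ already found in $R$ be extended, within $R$, to a copy of any larger $\Ur_{D_{n+1}}$ containing it. The natural mechanism is the strengthened far-type lemma above — attaching far material to $W_n$ places whole bounded copies in $R$ at prescribed distances — so the task becomes to amalgamate these pieces with $W_n$ into a single larger bounded copy inside $R$. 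Carrying out this amalgamation-inside-$R$ is the main obstacle of the proof, and is exactly what the techniques of \cite{LANVT} and \cite{NVTS} are designed to supply; once it is available the recursion runs to completion and the image of $f$ is the desired copy of $\Ur_{\N}$ in $R$.
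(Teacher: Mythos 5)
Your far-type step is sound and is in substance the paper's own mechanism: the set of realisations over a finite set of a type with minimum $m \geq \lceil p/2 \rceil$ is itself isometric to $\Ur_{2m}$ (you do not even need your gluing argument --- the paper records this for orbits), it contains $\Ur_p$, hence cannot lie in $B$, and one extracts a red realiser. (Minor point: the indivisibility of the bounded spaces $\Ur_q$ that you invoke is the main theorem of \cite{NVTS}, not of \cite{DLPS}; the paper uses the underlying machinery of \cite{NVTS} directly, in the form of Lemmas \ref{lem:mono} and \ref{lem:red}.) The genuine gap is your close case. The invariant you propose --- a chain of infinite red copies $W_n \cong \Ur_{D_n} \subseteq R$ containing the partial image, extended inside $R$ by a ``relative indivisibility'' --- is not something the cited techniques can supply, and for an arbitrary red copy it is simply false. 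The orbit lemmas of \cite{NVTS} fix a \emph{finite} set $G$ and pass to a subcopy of $\Ur_{\N}$; they cannot hold an infinite $W_n$ fixed while controlling colours around it. Worse, the hypothesis $\Ur_p \not\hookrightarrow B$ puts no constraint whatsoever on small-diameter orbits: for $p \geq 3$, the set of one-point extensions of a finite $F \subseteq W_n$ by a type with minimum $1$ is a copy of $\Ur_2$, which $B$ is perfectly allowed to contain entirely. So a fixed red copy of $\Ur_{D_n}$ may admit no red one-point extension at distance $1$ from a prescribed point, and the step ``produce $W_{n+1}$ with $W_n \subseteq W_{n+1} \subseteq R$'' can fail outright unless $W_n$ was built with foresight --- which is exactly the problem you started with, deferred rather than solved.

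The paper's actual solution replaces your infinite red copy by a much weaker, finite-stage invariant, and this is the idea your proposal is missing. It keeps a decreasing chain of ambient copies $\m{D}_n$ of $\Ur_{\N}$ (these are \emph{not} red) together with the placed points $\tilde{x}_0, \ldots, \tilde{x}_n$, maintaining only that $(\{\tilde{x}_k\})_{m-1} \cap D_n \subseteq R$ for each $k \leq n$, where $m = \lceil p/2 \rceil$. When placing $\tilde{x}_{n+1}$ with target type $f$, it monochromatises \emph{in advance}, via Lemma \ref{lem:mono}, the finite family $\mathcal{G}$ of all Kat\v{e}tov maps $g$ over $\{\tilde{x}_0, \ldots, \tilde{x}_n\}$ satisfying $|g - f| \leq m-1$ pointwise and $\min g \geq m$: each such orbit is a copy of $\Ur_{2\min g}$, contains $\Ur_p$, and is therefore forced red. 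Any point within $m-1$ of $\tilde{x}_{n+1}$ is then either within $m-1$ of an older $\tilde{x}_k$ (red by the earlier invariant) or realises some $g \in \mathcal{G}$ over the old points, by the triangle inequality (red by the forcing); so the red $(m-1)$-ball around the new point comes for free from forced-red \emph{far} orbits, and every later close extension lands in an already-reddened ball --- no small-minimum orbit ever needs to be controlled. Since each $\tilde{x}_k$ lies in its own red ball, the completed copy sits inside $R$. This prospective reddening of the $(m-1)$-neighbourhood of each point at the moment of its placement, over a finite base, is what makes the recursion close, and it cannot be replaced by the extension-of-a-fixed-red-copy step as you formulated it.
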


This section is devoted to the proof of Theorem \ref{thm:Uomega'}. We fix $p \in \N$ as well as a partition $U_{\N} = B \cup R$ such that $\Ur_p$ does not embed in $B$. Our goal is to prove that $\Ur_{\N}$ embeds into $R$. Let $m:=\left\lceil p/2\right\rceil$ (the least integer larger than or equal to $p/2$). Recall that if $Y\subset U_{\N}$, the set $(Y)_{\varepsilon}$ is defined by \[(Y) _{\varepsilon} = \{ x \in U_{\N} : \exists y \in Y \ \ d ^{\Ur_{\N}} (x,y) \leq \varepsilon \}.\] 

In particular, if $x\in U_{\N}$, the set $(\{ x \})_{m-1}$ denotes the set of all elements of $U_{\N}$ at distance $\leq m-1$ from $x$. We are going to construct $\tilde{U} \subset R$ isometric to $\Ur_{\N}$ recursively such that for every $x \in \tilde{U}$, \[ (\{ x \})_{m-1} \cap \tilde{U} \subset R.\] 

More precisely, fix an enumeration $\{ x_n : n \in \N\}$ of $U_{\N}$. We are going to construct $\{\tilde{x}_n: n \in \N\} \subset U_{\N}$ recursively together with a decreasing sequence $\m{D}_0 , \m{D}_1 , \ldots$ of metric subspaces of $\Ur_{\N}$ such that $x_n \mapsto \tilde{x}_n$ is an isometry and, for every $n \in \N$, each $\m{D}_n$ is isometric to $\Ur_{\N}$, $\{\tilde{x}_k: k\leq n \} \subset D_n$, and $(\{ \tilde{x}_n \})_{m-1} \cap D_n \subset R$. To do so, we will need the notion of \emph{Kat\v{e}tov} map as well as several technical lemmas. 

\begin{defn}
Given a metric space $\m{X} = (X, d^{\m{X}})$, a map $f:\funct{X}{(0,+\infty)}$ is \emph{Kat\v{e}tov over $\m{X}$} when \[\forall x, y \in X, \ \ |f(x) - f(y)| \leq d^{\m{X}} (x,y) \leq f(x) + f(y).\] 
\end{defn}

Equivalently, one can extend the metric $d^{\m{X}}$ to $X \cup \{ f \}$ by defining, for every $x, y$ in $X$, $ \widehat{d^{\m{X}}} (x, f) = f(x)$ and $\widehat{d^{\m{X}}} (x, y) = d^{\m{X}} (x, y)$. The corresponding metric space is then written $\m{X} \cup \{ f
\}$. The set of all Kat\v{e}tov maps over $\m{X}$ is written $E(\m{X})$. For a metric subspace $\m{X}$ of $\m{Y}$ and a Kat\v{e}tov map $f \in E(\m{X})$, the \emph{orbit of $f$ in $\m{Y}$} is the set $O(f,\m{Y})$ defined by \[ O(f,\m{Y}) = \{ y \in Y : \forall x \in X \ \ d^{\m{Y}}(y,x) = f(x)\}. \]

Any element $y \in O(f, \m{Y})$ is said to \emph{realize} $f$ over $\m{X}$. Here, the concepts of Kat\v{e}tov map and orbit are relevant because of the following standard reformulation of the notion of ultrahomogeneity, which will be used in the rest of the paper: 

\begin{lemma}
\label{prop:extension} Let $\m{X}$ be a countable metric space. Then $\m{X}$ is ultrahomogeneous
iff for every finite subspace  $\m{F}$ of $\m{X}$ and every Kat\v{e}tov map $f$ over $\m{F}$, if
$\m{F} \cup \{ f \}$ embeds into $\m{X}$, then $O(f, \m{X}) \neq \emptyset$.
\end{lemma}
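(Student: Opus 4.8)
The plan is to prove both implications, the forward one directly from the definition of ultrahomogeneity and the backward one by a back-and-forth construction exploiting the countability of $\m{X}$.

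For the forward direction, I would assume $\m{X}$ is ultrahomogeneous and fix a finite $\m{F} \subset \m{X}$ together with a Kat\v{e}tov map $f$ over $\m{F}$ such that $\m{F} \cup \{f\}$ embeds into $\m{X}$, say via an isometric embedding $\phi$. First observe that $\restrict{\phi}{F}$ is an isometry from $\m{F}$ onto the finite subspace $\phi(\m{F})$ of $\m{X}$; by ultrahomogeneity its inverse extends to a global self-isometry $g$ of $\m{X}$ satisfying $g(\phi(x)) = x$ for every $x \in F$. Setting $y := g(\phi(f))$ and using that $g$ preserves distances, one checks that for every $x \in F$ we have $d^{\m{X}}(y, x) = d^{\m{X}}(\phi(f), \phi(x)) = f(x)$, so $y \in O(f, \m{X})$ and the orbit is nonempty.

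For the backward direction, I would assume the orbit condition and let $h_0$ be an isometry between two finite subspaces of $\m{X}$. Fixing an enumeration $X = \{x_n : n \in \N\}$, I build an increasing chain of finite partial isometries extending $h_0$ by alternately forcing points into the domain (the \emph{forth} step) and into the range (the \emph{back} step). The key move is the one-point extension: given a finite partial isometry $h$ with domain the subspace $\m{A}'$ and range the subspace $\m{B}'$, and a point $x_n$ to be added to the domain, define $f$ on $A'$ by $f(a) = d^{\m{X}}(x_n, a)$, which is automatically Kat\v{e}tov over $\m{A}'$, and transport it through $h$ to a Kat\v{e}tov map $g$ over $\m{B}'$ by $g(b) = f(h^{-1}(b))$. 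Since $h$ is an isometry, $\m{B}' \cup \{g\}$ is isometric to $\m{A}' \cup \{f\}$, and the latter embeds into $\m{X}$ because $x_n$ realizes $f$ over $\m{A}'$; hence the hypothesis yields $O(g, \m{X}) \neq \emptyset$. Any $y$ in this orbit lies outside $B'$, since Kat\v{e}tov maps are strictly positive and so $y \neq b$ for all $b \in B'$, and setting $h(x_n) = y$ produces a finite partial isometry extending $h$ whose domain now contains $x_n$. The symmetric back step adds a prescribed enumerated point to the range. Taking the union of the chain then gives an isometry of $\m{X}$ onto itself extending $h_0$.

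The main obstacle, and the only point where the hypothesis is genuinely used, is this one-point extension step: one must realize, over the current transported configuration $\m{B}'$, the Kat\v{e}tov map dictated by the point being matched. The subtlety worth recording is that the orbit condition applies only to maps whose one-point extension embeds into $\m{X}$, but in the back-and-forth this is automatic, since the configuration we seek to realize is isometric to one already present in $\m{X}$. Strict positivity of Kat\v{e}tov maps is what guarantees that each newly found point is distinct from those already used, so the construction never stalls and the successive domains and ranges exhaust $X$.
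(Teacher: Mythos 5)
Your proof is correct. The paper does not actually spell out a proof of this lemma --- it simply cites \cite{Fr} and \cite{NVT} --- and your argument (forward direction: extend the inverse of $\restrict{\phi}{F}$ to a global self-isometry and take the image of the point realizing $f$; converse: back-and-forth with finite partial isometries, using one-point Kat\v{e}tov extensions transported through the current partial map) is exactly the standard argument in those references, with the two genuine subtleties handled properly: the embedding hypothesis $\m{B}' \cup \{g\}\hookrightarrow \m{X}$ is automatic because it is isometric to $\m{A}' \cup \{f\}$, which is realized by $x_n$, and strict positivity of Kat\v{e}tov maps guarantees the realizing point is new, so the construction never stalls.
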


\begin{proof} Postponed to section \ref{subsection:extension}. \end{proof}

\begin{lemma}

\label{lem:mono}

Let $G$ be a finite subset of $U_{\N}$, and $g$ a Kat\v{e}tov map with domain $G$ and with values in $\N$. Then there exists an isometric copy $\m{C}$ of $\Ur _{\N}$ inside $\Ur _{\N}$ such that:
\begin{enumerate} 
\item $G \subset C$,
\item $O(g, \m{C})\subset B$ or $O(g, \m{C})\subset \ R$.  
\end{enumerate}
\end{lemma}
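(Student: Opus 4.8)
The plan is to produce the copy $\m{C}$ by first isolating a large monochromatic portion of the orbit of $g$ and then growing a full copy of $\Ur_{\N}$ around it. Note first that $O(g,\Ur_{\N})\neq\emptyset$: since $G\cup\{g\}$ embeds in $\Ur_{\N}$ by universality, Lemma \ref{prop:extension} gives a realizing point. The key preliminary observation is that $O(g,\Ur_{\N})$, with the induced metric, is itself isometric to a \emph{bounded} integral Urysohn space. Indeed, for $z\in G$ and any two orbit points $y,y'$ the triangle inequality through $z$ gives $d^{\Ur_{\N}}(y,y')\leqslant 2g(z)$, so the distances of $O(g,\Ur_{\N})$ lie in $\{1,\dots,q\}$ with $q=2\min_{z\in G}g(z)$; and one checks directly from the extension property of $\Ur_{\N}$ (Lemma \ref{prop:extension}) that $O(g,\Ur_{\N})$ satisfies the corresponding extension property, hence is isometric to $\Ur_q$. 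The relevant feature of $\Ur_q$ is that it is indivisible (a known property of bounded integral Urysohn spaces for integer intervals $\{1,\dots,q\}$). Applying this to the partition of $O(g,\Ur_{\N})$ induced by $B\cup R$ yields a monochromatic $\mc{O}\subseteq O(g,\Ur_{\N})$ isometric to $\Ur_q$; say $\mc{O}\subseteq R$ (the case $\mc{O}\subseteq B$ being symmetric).

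Next I would grow a copy of $\Ur_{\N}$ around $\mc{O}$. The goal is to build recursively, using Lemma \ref{prop:extension}, an increasing chain $G=\m{F}_0\subseteq\m{F}_1\subseteq\cdots$ of finite subspaces of $\Ur_{\N}$ whose union $\m{C}$ satisfies the extension property, hence is isometric to $\Ur_{\N}$, while maintaining the invariant that every point of $\m{F}_n$ lying in $O(g,\Ur_{\N})$ in fact lies in $\mc{O}$. Since $G\cap O(g,\Ur_{\N})=\emptyset$ and, for a point $y$ added at some stage, membership in $O(g,\m{C})$ is equivalent to $y$ realizing $g$ over $G$, this invariant forces $O(g,\m{C})\subseteq\mc{O}\subseteq R$, as desired. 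At a stage where a Kat\v{e}tov map $h$ over $\m{F}_n\supseteq G$ must be realized, there are two cases: if $\restrict{h}{G}\neq g$, the realizing point automatically avoids the orbit and may be taken anywhere in $\Ur_{\N}$ by Lemma \ref{prop:extension}; if $\restrict{h}{G}=g$, the realizing point lands in the orbit and must be found \emph{inside} $\mc{O}$.

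This last case is the main obstacle. The constraints of $h$ toward $G$ are free (every point of $\mc{O}$ already realizes $g$ over $G$), and the constraints toward the orbit points of $\m{F}_n$ can be met by ultrahomogeneity of $\mc{O}\cong\Ur_q$; the trouble is the constraints toward the \emph{non-orbit} points added at earlier stages, for a bare application of indivisibility gives no control over how $\mc{O}$ sits relative to those external points. The heart of the argument is therefore a \emph{relative} form of the above: one cannot choose $\mc{O}$ first and then place external points against it, but must produce the monochromatic orbit and the external skeleton of $\m{C}$ in a coordinated way, so that $\mc{O}$ remains rich enough to answer every future orbit demand. I expect to resolve this by interleaving the indivisibility argument with the extension property rather than applying them in succession, adding each external point generically with respect to $\mc{O}$ (via Lemma \ref{prop:extension}) so that $\mc{O}$ keeps realizing all the Kat\v{e}tov maps over $G$ that later arise, in the spirit of the amalgamation techniques of \cite{NVTS}. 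Once this relative indivisibility is established, the recursion runs without further difficulty and delivers a copy $\m{C}$ with $G\subseteq C$ and $O(g,\m{C})$ monochromatic.
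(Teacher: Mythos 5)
Your reduction is correct as far as it goes: the orbit $O(g,\Ur_{\N})$ is indeed isometric to $\Ur_q$ with $q=2\min g$ (the paper makes the same parenthetical observation), and the indivisibility of $\Ur_q$ is a true theorem --- it is the main combinatorial result of \cite{NVTS}. But your proof has a genuine gap, and you name it yourself. The sequential strategy (first fix a monochromatic suborbit $\mc{O}\cong\Ur_q$ by indivisibility, then grow $\m{C}$ around it) fails for exactly the reason you identify: once non-orbit points have been added to the skeleton, realizing a Kat\v{e}tov map $h$ with $\restrict{h}{G}=g$ requires a point of $\mc{O}$ at prescribed distances from those \emph{external} points, and neither indivisibility nor ultrahomogeneity of $\mc{O}$ gives any control over such external distances --- Lemma \ref{prop:extension} realizes maps over finite sets somewhere in the ambient $\Ur_{\N}$, never inside a prescribed subcopy. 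The ``relative indivisibility'' you then postulate, in which the monochromatic orbit is produced in coordination with the growing external skeleton so that it stays generic with respect to every future finitely-constrained demand, is not a routine interleaving: it is essentially the exact content of Lemma 2 of \cite{NVTS}, which the present paper explicitly describes as ``the core of \cite{NVTS}'' (the paper proving the oscillation stability of the Urysohn sphere). Your argument thus reduces Lemma \ref{lem:mono} to an unproved statement of the same depth as the lemma itself, and the sentence ``I expect to resolve this by interleaving\dots'' is the point at which the proposal stops being a proof.

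For comparison, the paper does not reprove this step either: its proof of Lemma \ref{lem:mono} is a citation, combining Lemma 2 and Lemma 3 of \cite{NVTS} after replacing $\Ur_p$ by $\Ur_{\N}$, with the remark that both proofs transfer verbatim. Note also that your route is, if anything, heavier than the paper's rather than lighter: the indivisibility of $\Ur_q$ that you invoke as a black box is itself obtained in \cite{NVTS} from the very orbit-control machinery in question, so you are importing the downstream consequence of that machinery while still owing its key relative step. A complete write-up along your lines would have to reproduce (or cite) that machinery, at which point one may as well cite Lemmas 2 and 3 of \cite{NVTS} directly, as the paper does.
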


In words, Lemma \ref{lem:mono} asserts that going to a subcopy of $\Ur_{\N}$ if necessary, we may assume that the orbit of $g$ is completely included in one of the parts of the partition. Observe that as a metric space, the orbit $O(g, \m{C})$ is isometric to $\Ur _n$ where $n = 2\min g$ (indeed, it is countable ultrahomogeneous with distances in $\{1,\ldots ,n \}$ and embeds every countable metric space with distances in $\{1,\ldots ,n \}$). 

\begin{proof}
The proof of Lemma \ref{lem:mono} can be found in \cite{NVTS}. More precisely, Lemma \ref{lem:mono} can be obtained by combining Lemma 2 \cite{NVTS} and Lemma 3 \cite{NVTS} after having replaced $\Ur_p$ by $\Ur_{\N}$ in these statements. The proof of Lemma 3 \cite{NVTS} is elementary, while the proof of Lemma 2 \cite{NVTS} represents the core of \cite{NVTS} and is too lengthy to be presented here. These two proofs can be carried out with no modification once $\Ur_p$ has been replaced by $\Ur_{\N}$.  
\end{proof}

\begin{lemma}

\label{lem:red}

Let $G_0 \subset G$ be finite subsets of $U_{\N}$, and let $\mathcal{G}$ a finite family of Kat\v{e}tov maps with domain $G$ and such that for all $g, g' \in \mathcal{G}$: \[ \max(\restrict{|g - g'|}{G_0}) = \max | g - g'|, \] \[ \min(\restrict{(g+g')}{G_0}) = \min(g + g'),\] \[ \min(\restrict{g}{G_0})=\min(g). \]

Then there exists an isometric copy $\m{C}$ of $\Ur _{\N}$ inside $\Ur _{\N}$ such that:
\begin{enumerate} 
\item $G \cap C = G_0$,
\item $\forall g \in \mathcal{G} \ \ O(\restrict{g}{G_0} , \m{C}) \subset O(g, \Ur _{\N}).$  
\end{enumerate} 

\end{lemma}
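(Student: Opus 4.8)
The plan is to avoid any explicit recursion and instead build, in one stroke, an auxiliary countable $\N$-valued metric space $\m{W}$ that contains both an abstract copy of $\Ur_{\N}$ and a copy of $G = G_0 \cup H$ (where $H = G \setminus G_0$), with exactly the interaction we want, and then to transport it inside $\Ur_{\N}$ using universality together with ultrahomogeneity. Concretely, I would start from an abstract copy $\m{C}^\ast$ of $\Ur_{\N}$ carrying a fixed isometric copy of $G_0$, and define a metric on the amalgam $\m{C}^\ast \cup H$ over $G_0$ that keeps the given distances on $G$ and on $\m{C}^\ast$. The only freedom, and the whole content of the lemma, is the choice of the cross distances $\rho(c,z)$ for $c \in \m{C}^\ast$ and $z \in H$.

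For each $z \in H$ I would first prescribe $c \mapsto \rho(c,z)$ on the set $P = G_0 \cup \bigcup_{g \in \mathcal{G}} O(\restrict{g}{G_0}, \m{C}^\ast)$, by setting $\rho(x,z) = d^{\Ur_{\N}}(x,z)$ for $x \in G_0$ and $\rho(c,z) = g(z)$ whenever $c \in O(\restrict{g}{G_0}, \m{C}^\ast)$; the first hypothesis forces $g \mapsto \restrict{g}{G_0}$ to be injective on $\mathcal{G}$, so this is well defined. I would then extend to all of $\m{C}^\ast$ by the integer McShane formula $\rho(c,z) = \min_{p \in P}\bigl(\rho(p,z) + d^{\m{C}^\ast}(c,p)\bigr)$, the largest $1$-Lipschitz extension of the prescribed data, which automatically takes positive integer values.

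The heart of the proof is the verification that $\m{C}^\ast \cup H$ is a genuine $\N$-valued metric space, and this is exactly where the three hypotheses enter. One first checks that the prescribed data is $1$-Lipschitz on $P$, so that the extension reproduces it there: the nontrivial case is two orbit points of maps $g,g'$, where the needed $|g(z)-g'(z)| \le d^{\m{C}^\ast}(c,c')$ follows from $\max(\restrict{|g-g'|}{G_0}) = \max|g-g'|$ combined with $d^{\m{C}^\ast}(c,c') \ge \max_{x\in G_0}|g(x)-g'(x)|$. The inequalities $|\rho(c,z)-\rho(c',z)| \le d^{\m{C}^\ast}(c,c')$ are then automatic from $1$-Lipschitzness, while the reverse inequalities $d^{\m{C}^\ast}(c,c') \le \rho(c,z)+\rho(c',z)$ and $d^{\Ur_{\N}}(z,z') \le \rho(c,z)+\rho(c,z')$ reduce, after unwinding the minima, to checking bounds of the shape $\rho(p,z)+\rho(q,z') \ge d^{\m{C}^\ast}(p,q)$ on $P$. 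The only delicate case is again two orbit points, where $d^{\m{C}^\ast}(p,q) \le \min_{x\in G_0}(g(x)+g'(x))$ must be absorbed; this is precisely the content of $\min(\restrict{(g+g')}{G_0}) = \min(g+g')$, its diagonal instance $g=g'$ (equivalent to $\min(\restrict{g}{G_0}) = \min g$, the third hypothesis) covering two orbit points of a single $g$. I expect this triangle-inequality bookkeeping — and in particular matching the correct hypothesis to the correct case — to be the main obstacle; everything else is soft.

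Finally, granting that $\m{W} = \m{C}^\ast \cup H$ is a countable $\N$-valued metric space, I would embed it into $\Ur_{\N}$ by universality, note that $G \subset \m{W}$ is isometric to the original $G \subset \Ur_{\N}$, and invoke ultrahomogeneity of $\Ur_{\N}$ to adjust the embedding so that it fixes $G$ pointwise (this is where Lemma \ref{prop:extension} is implicitly used). The image $\m{C}$ of $\m{C}^\ast$ is then a copy of $\Ur_{\N}$ with $G_0 \subset C$; since $\rho(c,z) \ge 1$ for all $c \in \m{C}^\ast$ and $z \in H$, the set $C$ is disjoint from $H$, so $G \cap C = G_0$, which is (1). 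For (2), any $y \in O(\restrict{g}{G_0}, \m{C})$ is the image of an orbit point $c$, whence $\rho(c,\cdot) = g$ on all of $G$ by construction, i.e. $y \in O(g, \Ur_{\N})$.
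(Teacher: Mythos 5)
Your argument is correct, and it is worth noting that the paper itself contains no self-contained proof of Lemma \ref{lem:red}: it is quoted from \cite{NVTS} (Lemma 5 there, with $\Ur_p$ replaced by $\Ur_{\N}$), where the copy $\m{C}$ is built \emph{recursively} inside $\Ur_{\N}$, point by point: at each stage a Kat\v{e}tov map over the finite piece of $C$ constructed so far (together with $G_0$) is extended over $G$ by essentially the same largest $1$-Lipschitz min-plus formula you use, the three hypotheses enter in verifying that the extension is Kat\v{e}tov, and the new point is realized via the extension property (Lemma \ref{prop:extension}). Your one-shot version --- build the abstract amalgam $\m{C}^{\ast}\cup H$ over $G_0$, apply the McShane formula once to all of $\m{C}^{\ast}$ simultaneously (including the infinite orbit sets), verify that the result is an $\N$-valued metric space, then embed by universality and fix $G$ pointwise by ultrahomogeneity --- is a clean repackaging with the identical combinatorial core, and your matching of hypotheses to cases is exactly right: the max-hypothesis gives injectivity of $g \mapsto \restrict{g}{G_0}$ (so the prescription on $P$ is well defined, also using positivity of Kat\v{e}tov maps to keep orbits off $G_0$) and the Lipschitz bound $|g(z)-g'(z)| \le d^{\m{C}^{\ast}}(p,q)$ across two orbit points; the min-of-sums hypothesis absorbs $d^{\m{C}^{\ast}}(p,q) \le \min(\restrict{(g+g')}{G_0})$ in the triangle $d^{\m{C}^{\ast}}(p,q)\le \rho(p,z)+\rho(q,z)$; and the third hypothesis is indeed the diagonal instance. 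What the recursion buys is never handling infinite prescription sets; what your version buys is concentrating all bookkeeping in a single metric verification. Two small inaccuracies in your sketch, neither fatal: (a) the triangles with two vertices in $H$ contribute not only $d^{\Ur_{\N}}(z,z') \le \rho(c,z)+\rho(c,z')$ but also the Lipschitz-in-$z$ family $|\rho(c,z)-\rho(c,z')| \le d^{\Ur_{\N}}(z,z')$, which you omit (it follows by the same unwinding, using the Kat\v{e}tov property of each $g$ on all of $G$ at orbit points of $P$); and (b) after unwinding, the mixed inequality to check is $d^{\Ur_{\N}}(z,z') \le \rho(p,z) + d^{\m{C}^{\ast}}(p,q) + \rho(q,z')$, not the shape $\rho(p,z)+\rho(q,z') \ge d^{\m{C}^{\ast}}(p,q)$ you state, and in the two-orbit-point case it is closed by the \emph{max}-hypothesis again (via $\max|g-g'| = \max(\restrict{|g-g'|}{G_0}) \le d^{\m{C}^{\ast}}(p,q)$), not by the min-hypotheses. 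Both checks go through, so the proof stands as a valid, and arguably more self-contained, alternative to the paper's citation.
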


\begin{proof} Postponed to section \ref{subsection:red}. \end{proof}

\subsection{Construction of $\tilde{x}_0$ and $\m{D}_0$}



First, pick an arbitrary $u \in U_{\N}$ and consider the map $g : \funct{\{u\}}{\N}$ defined by $g(u)=m$. By Lemma \ref{lem:mono}, find an isometric copy $\m{C}$ of $\Ur _{\N}$ inside $\Ur _{\N}$ such that:
\begin{enumerate} 
\item $u \in C$,
\item $O(g, \m{C})\subset B$ or $O(g, \m{C})\subset \ R$.  
\end{enumerate}

Note that since $g$ has minimum $m$, the orbit $O(g, \m{C})$ is isometric to $\Ur_{2m}$ and therefore contains a copy of $\Ur_p$. Hence, because $\Ur_p$ does not embed in $B$, the inclusion $O(g, \m{C})\subset B$ is excluded and we really have $O(g, \m{C})\subset R$. Let $\tilde{x}_0~\in~O(g, \m{C})$ and for every $k\leq m$ let $g_k:\funct{\{ u, \tilde{x}_0 \}}{\N}$ be such that $g_k(u)=m$ and $g_k(\tilde{x}_0)=k$. The sets $G_0 = \{ \tilde{x}_0 \}$ and $G = \{ u, \tilde{x}_0 \}$, and the family $\mathcal{G}=\{ g_k:k\leq m\}$ satisfy the hypotheses of Lemma \ref{lem:red}, which allows us to obtain an isometric copy $\m{D}_0$ of $\Ur _{\N}$ inside $\m{C}$ such that:
\begin{enumerate} 
\item $\{ u, \tilde{x}_0 \} \cap D_0 = \{ \tilde{x}_0 \}$,
\item $\forall k\leq m \ \ O(\restrict{g_k}{\{ \tilde{x}_0 \}} , \m{D}_0) \subset O(g_k, \m{C})$.   
\end{enumerate}

Note that for every $k\leq m$, we have $O(g_k, \m{C}) \subset O(g, \m{C})\subset R$. Therefore, in $\m{D}_0$, all the spheres around $\tilde{x}_0$ with radius $k\leq m$ are included in $R$. So \[(\{ \tilde{x}_0 \})_{m-1} \cap D_0 \subset R. \ \ \qed \]

\subsection{Induction step}

Assume that we constructed $\{\tilde{x}_k: k \leq n \} \subset U_{\N}$ together with a decreasing sequence $\m{D}_0 ,\ldots, \m{D}_n$ of metric subspaces of $\Ur_{\N}$ such that $x_k \mapsto \tilde{x}_k$ is an isometry (recall that $\{ x_n : n \in \N\}$ is the enumeration of $U_{\N}$ we fixed at the beginning of the proof), each $\m{D}_k$ is isometric to $\Ur_{\N}$, $\{\tilde{x}_k: k\leq n \} \subset D_n$ and $(\{ \tilde{x}_k \})_{m-1} \cap D_n \subset R$ for every $k \leq n$. We are going to construct $\tilde{x}_{n+1}$ and $\m{D}_{n+1}$. Consider the map $f : \funct{\{ \tilde{x}_0,\ldots,\tilde{x}_n \}}{\N}$ where \[ \forall k\leq n \ \ f(\tilde{x}_k) = d^{\Ur_{\N}}(x_k, x_{n+1}).\] 

Recalling that $E(\{ \tilde{x}_0,\ldots,\tilde{x}_n \})$ denotes the set of all Kat\v{e}tov maps from the set $\{ \tilde{x}_0,\ldots,\tilde{x}_n \}$ to $\N$, consider the set $\mathcal{G}$ defined by \[ \{ g \in E(\{ \tilde{x}_0,\ldots,\tilde{x}_n \}) : \forall k \leq n \ (\left|f(\tilde{x}_k) - g(\tilde{x}_k))\right| \leq m-1 \ \ \textrm{and} \ \ g(\tilde{x}_k) \geq m)\}. \] 

This set is finite and a repeated application of Lemma \ref{lem:mono} allows us to construct an isometric copy $\m{D}_{n+1}$ of $\Ur _{\N}$ inside $\m{D}_n$ such that:
\begin{enumerate} 
\item $\{ \tilde{x}_0,\ldots,\tilde{x}_n \} \subset D_{n+1}$,
\item $\forall g \in \mathcal{G}, \ \ O(g, \m{D}_{n+1})\subset B \ \textrm{or} \ R$.  
\end{enumerate}

Note that since every $g \in \mathcal{G}$ has minimum $m$, the orbit $O(g, \m{D}_{n+1})$ is isometric to $\Ur_{2m}$ and therefore contains a copy of $\Ur_p$. Because $\Ur_p$ does not embed in $B$, we consequently have \[ \forall g \in \mathcal{G}, \ \ O(g, \m{D}_{n+1})\subset R. \]

Let $\tilde{x}_{n+1} \in O(f, \m{D}_{n+1})$. We claim that $\tilde{x}_{n+1}$ and $\m{D}_{n+1}$ are as required. Note that, because $\tilde{x}_{n+1} \in O(f, \m{D}_{n+1})$, we have \[ \forall k\leq n \ \ d^{\Ur_{\N}}(\tilde{x}_{n+1}, \tilde{x}_k) = f(\tilde{x}_k) = d^{\Ur_{\N}}(x_k, x_{n+1}).\] 

Therefore, $x_k \mapsto \tilde{x}_k$ is an isometry. Next we prove that $(\{ \tilde{x}_{n+1} \})_{m-1} \cap D_{n+1} \subset R$. Indeed, let $y\in (\{ \tilde{x}_{n+1} \})_{m-1} \cap D_{n+1}$. If $d^{\Ur_{\N}}(\tilde{x}_k,y) \geq  m$ for every $k\leq n$, then the map $d^{\Ur_{\N}}(\cdot,y)$ is in $\mathcal{G}$ and so $y \in O(d^{\Ur_{\N}}(\cdot,y), \m{D}_{n+1})\subset R$. Otherwise, we have $d^{\Ur_{\N}}(\tilde{x}_k,y) < m$ for some $k\leq n$ and \[ y \in \ (\{ \tilde{x}_k \})_{m-1} \cap D_{n+1} \ \subset \ (\{ \tilde{x}_k \})_{m-1} \cap D_{n} \ \subset R. \ \ \qed \]

\subsection{Proof of Lemma \ref{prop:extension}}

\label{subsection:extension}

The proof is standard but we detail it here for completeness. Assume that $\m{X}$ is ultrahomogeneous. Let $\varphi : \funct{\m{F} \cup \{ f \}}{\m{X}}$ be an embedding. By ultrahomogeneity of $\m{X}$, there is an isometry $\psi$ of $\m{X}$ onto itself such that: \[ \forall x \in F, \ \ \psi (x) = \varphi (x). \]

Then, the point $\psi ^{-1} (\varphi (f))$ is in $O(f, \m{X})$.  

For the converse, assume that $\{ x_0 ,\ldots, x_n \}$ and $\{z_0 ,\ldots, z_n \}$ are isometric finite subspaces of $\m{X}$ and that $\varphi : x_k \mapsto z_k$ is an isometry. We wish to extend $\varphi$ to an isometry of $\m{X}$ onto itself. We do that thanks to a back and forth method. First, extend $\{ x_0 ,\ldots, x_n \}$ and $\{z_0 ,\ldots, z_n \}$ so that $\{ x_k : k \in \N \} = \{ z_k : k \in \N \} = X$. For $k\leq n$, let $\sigma (k) = \tau (k) = k$. Then, set $\sigma (n+1) = n+1$. Consider the map $f _{n+1}$ defined on $\{\varphi ( x_{\sigma (k)}) : k \leq n \}$ by:

\begin{center}
$\forall k \leq n, \ \ f_{n+1} (\varphi (x_{\sigma (k)}))  = d^{\m{X}}(x_{\sigma (n+1)} , x _{\sigma (k)})$.
\end{center}

Observe that $f_{n+1}$ is Kat\v{e}tov over $\{\varphi ( x_{\sigma (k)}) : k \leq n \}$ and that the space $\{\varphi ( x_{\sigma (k)}) : k \leq n \} \cup \{ f_{n+1}\}$ is isometric to $\{ x_{\sigma (k)} : k \leq n+1 \}$. By hypothesis on $\m{X}$, we can consequently find a point in $O(f_{n+1}, \m{X})$, call it $\varphi (x_{\sigma (n+1)})$. Next, set: \[ \tau (n+1) = \min \{ k \in \N : z_k \notin \{ \varphi (x_{\sigma (i)}) : i \leq n \} \}.\]

Consider the map $g_{n+1}$ defined on $\{ x_{\sigma (k)} : k \leq n \}$ by: \[\forall k \leq n, \ \ g_{n+1} (x_{\sigma (k)}) = d^{\m{X}}(z_{\tau (n+1)} , \varphi (x_{\sigma (k)})).\]

Then $g_{n+1}$ is Kat\v{e}tov over the space $\{x_{\sigma (k)} : k \leq n \}$ and the corresponding union $\{x_{\sigma (k)} : k \leq n \} \cup \{ g_{n+1}\}$ is isometric to $\{ \varphi (x_{\sigma (k)}) : k \leq n \} \cup \{ z_{\tau (n+1)} \}$. So again, by hypothesis on $\m{X}$, we can find a point in $O(g_{n+1},\m{X})$, call it $\varphi ^{-1} (z_{\tau (n+1)})$. In general, if $\sigma$ and $\tau$ have been defined up to $m$ and $\varphi$ has been defined on $T_m := \{x_{\sigma (0)},\ldots , x_{\sigma (m)} \} \cup \{ \varphi ^{-1} (z_{\sigma (0)}),\ldots , \varphi (z_{\sigma (m)}) \}$, set: \[\sigma (m+1) = \min \{ k \in \N : x_k \notin T_m \}.\]

Consider the map $f _{m+1}$ defined on $\varphi (T_m)$ by:

\[
\forall k \leq m, \ \left\{ 
\begin{array}{l} 
f_{m+1} ( \varphi (x _{\sigma (k)})) = d^{\m{X}}(x_{\sigma (m+1)} , x _{\sigma (k)})\\ 
f_{m+1} ( z _{\tau (k)})) = d^{\m{X}}(x_{\sigma (m+1)} ,\varphi ^{-1} (z _{\tau (k)}) )
\end{array} \right.
\]

Observe that $f_{m+1}$ is Kat\v{e}tov over $\varphi (T_m)$ and that $\varphi (T_m \cup \{ f_{m+1}\})$ is isometric to $T_m \cup \{ x_{\sigma (m+1)} \}$. By hypothesis on $\m{X}$, we can consequently find a point in $O(f_{m+1},\m{X})$, call it $\varphi (x_{\sigma (m+1)})$. Next, let:

\begin{center}
$\tau (m+1) = \min \{ k \in \N : z_k \notin \{ \varphi (x_{\sigma (i)}) : i < n+1 \} \}$ 
\end{center}

Consider the map $g_{m+1}$ defined on $T_m$ by:

\[
\forall k \leq m, \ \left\{ 
\begin{array}{l} 
g_{m+1} (x _{\sigma (k)}) = d^{\m{X}}(z _{\tau (m+1)} , \varphi ( x _{\sigma (k)}))\\ 
g_{m+1} (\varphi ^{-1} (z _{\tau (k)})) = d^{\m{X}}(z_{\tau (m+1)} , z _{\tau (k)} )
\end{array} \right.
\] 

Then $g_{n+1}$ is Kat\v{e}tov over $T_m$ and the union $T_m \cup \{ g_{m+1}\}$ is isometric to $ \varphi (T_m \cup \{ z_{\tau (m+1)} \})$. So again, by hypothesis on $\m{X}$, we can find a point in $O(g_{m+1},\m{X})$, call it $\varphi ^{-1} (z_{\tau (m+1)})$. After infinitely many steps, we are left with an isometry $\varphi$ with domain $ \m{X} = \{ x_k : k \in \N \}$ and range $\m{X} = \{ z_k : k \in \N \}$. This finishes the proof.

\subsection{Proof of Lemma \ref{lem:red}}

\label{subsection:red}

Lemma \ref{lem:red} is a modified version of a result proved in \cite{NVTS}, namely Lemma 5. Let $G_0 \subset G$ be finite subsets of $\Ur_{\N}$, $\mathcal{G}$ a family of Kat\v{e}tov maps with domain $G$ and such that for every $g, g' \in \mathcal{G}$: \[ \max(\restrict{|g - g'|}{G_0}) = \max | g - g'|, \] \[ \min(\restrict{(g+g')}{G_0}) = \min(g + g').\] 

We need to produce an isometric copy $\m{C}$ of $\Ur _{\N}$ inside $\Ur _{\N}$ such that:
\begin{enumerate} 
\item $G \cap C = G_0$.
\item $\forall g \in \mathcal{G} \ \ O(\restrict{g}{G_0} , \m{C}) \subset O(g, \Ur _{\N}).$  
\end{enumerate} 

First, observe that it suffices to provide the proof assuming that $G$ is of the form $G_0\cup\{ z\}$. The general case is then handled by repeating the procedure. 
  
\begin{lemma}

\label{lem:red1}

Let $\m{X}$ be a finite subspace of $\bigcup \{ O(\restrict{g}{G_0}, \Ur_{\N}) : g \in \mathcal{G}\}$. Then there is an isometry $\varphi$ on $\Ur _{\N}$ fixing $G_0 \cup (X \cap \bigcup \{ O(g, \Ur_{\N}) : g \in \mathcal{G}\})$ and such that: \[ \forall g \in \mathcal{G} \ \ \varphi \left( X \cap O(\restrict{g}{G_0}, \Ur_{\N})\right) \subset O(g, \Ur_{\N}).\]

\end{lemma}

\begin{proof}
For $x \in X$, there is a unique element $g_x \in \mathcal{G}$ such that $x \in O(\restrict{g_x}{G_0}, \Ur_{\N})$. Let $k$ be the map defined on $G_0 \cup X$ by
\begin{displaymath}
k(x) = \left \{ \begin{array}{cl}
 d^{\Ur _{\N}}(x,z) & \textrm{if $x \in G_0$,} \\
 g_x(z) & \textrm{if $x \in X$.}
 \end{array} \right.
\end{displaymath}

\begin{claimm}
The map $k$ is Kat\v{e}tov. 
\end{claimm}

\begin{proof}
The metric space $G_0\cup \{z\}$ witnesses that $k$ is Kat\v{e}tov over $G_0$. Hence, it suffices to check that for every $x \in X$ and $y \in G_0\cup X$, \[ |k(x) - k(y)| \leq d^{\Ur _{\N}} (x,y) \leq k(x) + k(y).\]

Consider first the case $y \in G_0$. Then $d^{\Ur}(x,y) = g_x(y)$ and we need to check that \[ |g_x(z) - d^{\Ur _{\N}}(y,z)| \leq g_x(y) \leq g_x(z) + d^{\Ur _{\N}}(y,z).\] 

Or equivalently, \[ |g_x(z) - g_x(y)| \leq d^{\Ur _{\N}}(y,z) \leq g_x(z) + g_x(y).\] 

But this is true since $g_x$ is Kat\v{e}tov over $G_0\cup \{z\}$. Consider now the case $y\in X$. Then $k(y) = g_y(z)$ and we need to check \[ |g_x(z) - g_y(z)| \leq d^{\Ur _{\N}}(x,y) \leq g_x(z) + g_y(z).\] 

But since $\m{X}$ is a subspace of $\bigcup \{ O(\restrict{g}{G_0}, \Ur_{\N}) : g \in \mathcal{G}\}$, we have, for every $u \in G_0$, \[ |d^{\Ur _{\N}}(x,u) - d^{\Ur _{\N}}(u,y)| \leq d^{\Ur _{\N}}(x,y) \leq d^{\Ur _{\N}}(x,u) + d^{\Ur _{\N}}(x,u).\]

Since $x \in O(\restrict{g_x}{G_0}, \Ur_{\N})$ and $y \in O(\restrict{g_y}{G_0}, \Ur_{\N})$, this is equivalent to \[ |g_x(u) - g_y(u)| \leq d^{\Ur _{\N}}(x,y) \leq g_x(u) + g_y(u).\] 

Therefore, \[ \max (\restrict{|g_x - g_y|}{G_0}) \leq d^{\Ur _{\N}}(x,y) \leq \min (\restrict{(g_x + g_y)}{G_0}) .\] 

Now, by hypothesis on $\mathcal{G}$, this latter inequality remains valid if $G_0$ is replaced by $G_0\cup \{ z\}$. The required inequality follows. \end{proof}

By ultrahomogeneity of $\Ur _{\N}$ (or, more precisely, by its equivalent reformulation provided in Lemma \ref{prop:extension}), we can pick a point $z' \in O(k, \Ur_{\N})$. The metric space $G_0\cup(\m{X}\cap\bigcup \{ O(g, \Ur_{\N}) : g \in \mathcal{G}\})\cup \{k\}$ being isometric to the subspace of $\Ur _{\N}$ supported by $G_0\cup(X\cap\bigcup \{ O(g, \Ur_{\N}) : g \in \mathcal{G}\})\cup \{z\}$, so is the subspace of $\Ur _{\N}$ supported by $G_0\cup(X\cap\bigcup \{ O(g, \Ur_{\N}) : g \in \mathcal{G}\})\cup \{z'\}$. By ultrahomogeneity again, we can find a surjective isometry $\varphi$ of $\Ur _{\N}$ fixing $G_0\cup(X\cap\bigcup \{ O(g, \Ur_{\N}) : g \in \mathcal{G}\})$ and such that $\varphi(z')=z$. Then $\varphi$ is as required: let $g \in \mathcal{G}$ and $x \in O(\restrict{g}{G_0}, \Ur_{\N})$. Then: \[d^{\Ur _{\N}}(\varphi(x),z) = d^{\Ur _{\N}}(\varphi(x), \varphi(z')) = d^{\Ur _{\N}}(x,z') = k(x) = g(z). \] 

That is, $\varphi(x) \in O(g, \Ur_{\N})$. \end{proof}

\begin{lemma}

\label{lem:red2}

There is an isometric embedding $\psi$ of $G_0 \cup \bigcup \{ O(\restrict{g}{G_0}, \Ur_{\N}) : g \in \mathcal{G})\}$ into $G_0 \cup \bigcup \{ O(g, \Ur_{\N}) : g \in \mathcal{G})\}$ fixing $G_0$ such that: \[ \forall g \in \mathcal{G} \ \ \psi \left( O(\restrict{g}{G_0}, \Ur_{\N})\right) \subset O(g, \Ur_{\N}).\] 

\end{lemma}

\begin{proof}
Let $\{ x_n : n \in \N\}$ enumerate $\bigcup \{ O(\restrict{g}{G_0}, \Ur_{\N}) : g \in \mathcal{G})\}$. For $n \in \N$, let $g_n$ be the only $g \in \mathcal{G}$ such that $x_n \in O(\restrict{g_n}{G_0}, \Ur_{\N})$. Apply Lemma \ref{lem:red1} inductively to construct a sequence $(\psi _n)_{n \in \N}$ of surjective isometries of $\Ur _{\N}$ such that for every $n\in \N$, $\psi_n$ fixes $G_0 \cup \psi_{n-1}\left(\{x_k : k < n \}\right)$ and $\psi_n(x_n) \in O(g_n, \Ur_{\N})$. Then $\psi$ defined on $G_0 \cup \{ x_n : n \in \N\}$ by $\restrict{\psi}{G_0} = id_{G_0}$ and $\psi(x_n) = \psi_n(x_n)$ is as required. \end{proof}

We now turn to the proof of Lemma \ref{lem:red}. Let $\m{Y}$ and $\m{Z}$ be the metric subspaces of $\Ur _{\N}$ supported by $G \cup \bigcup \{ O(g, \Ur_{\N}) : g \in \mathcal{G})\}$ and $G_0 \cup \bigcup \{ O(\restrict{g}{G_0}, \Ur_{\N}) : g \in \mathcal{G})\}$ respectively. Let $i_0 : \funct{\m{Z}}{\Ur _{\N}}$ be the isometric embedding provided by the identity. By Lemma~\ref{lem:red2}, the space $\m{Z}$ embeds isometrically into $\m{Y}$ via an isometry $j_0$ that fixes $G_0$. We can therefore consider the metric space $\m{W}$ obtained by gluing $\Ur _{\N}$ and $\m{Y}$ via an identification of $\m{Z} \subset \Ur _{\N}$ and $j_0\left(\m{Z}\right) \subset \m{Y}$. The space $\m{W}$ is described in Figure 1. 

Formally, the space $\m{W}$ can be constructed thanks to a property of  countable metric spaces with distances in $\N$ known as \emph{strong amalgamation}: we can find a countable metric space $\m{W}$ with distances in $\N$ and isometric embeddings $i_1 : \funct{\Ur _{\N}}{\m{W}}$ and $j_1 : \funct{\m{Y}}{\m{W}}$ such that: 
\begin{itemize}
	\item $i_1 \circ i_0 = j_1 \circ j_0$,
	\item $W = i_1\left(U_{\N}\right) \cup j_1\left(Y\right)$,
	\item $i_1\left(U_{\N}\right) \cap j_1\left(Y\right) = (i_1 \circ i_0)\left(Z\right) = (j_1 \circ j_0)\left(Z\right)$, 
	\item for every $x \in U_{\N}$ and $y \in Y$: \begin{align*}d^{\m{W}}(i_1(x),j_1(y)) & = \min \{ d^{\m{W}}(i_1(x),i_1 \circ i_0 (z)) + d^{\m{W}}(j_1 \circ j_0 (z),j_1(y)) : z \in Z\} \\
& = \min \{ d^{\Ur _{\N}}(x,i_0 (z)) + d^{\m{Y}}(j_0 (z),y) : z \in Z\} \\
& = \min \{ d^{\Ur _{\N}}(x,z) + d^{\m{Y}}(j_0 (z),y) : z \in Z\}.
\end{align*}
\end{itemize}
\vskip-5pt
\begin{figure}[h]
\begin{center}
\hskip-10pt\includegraphics[width=132.00mm]{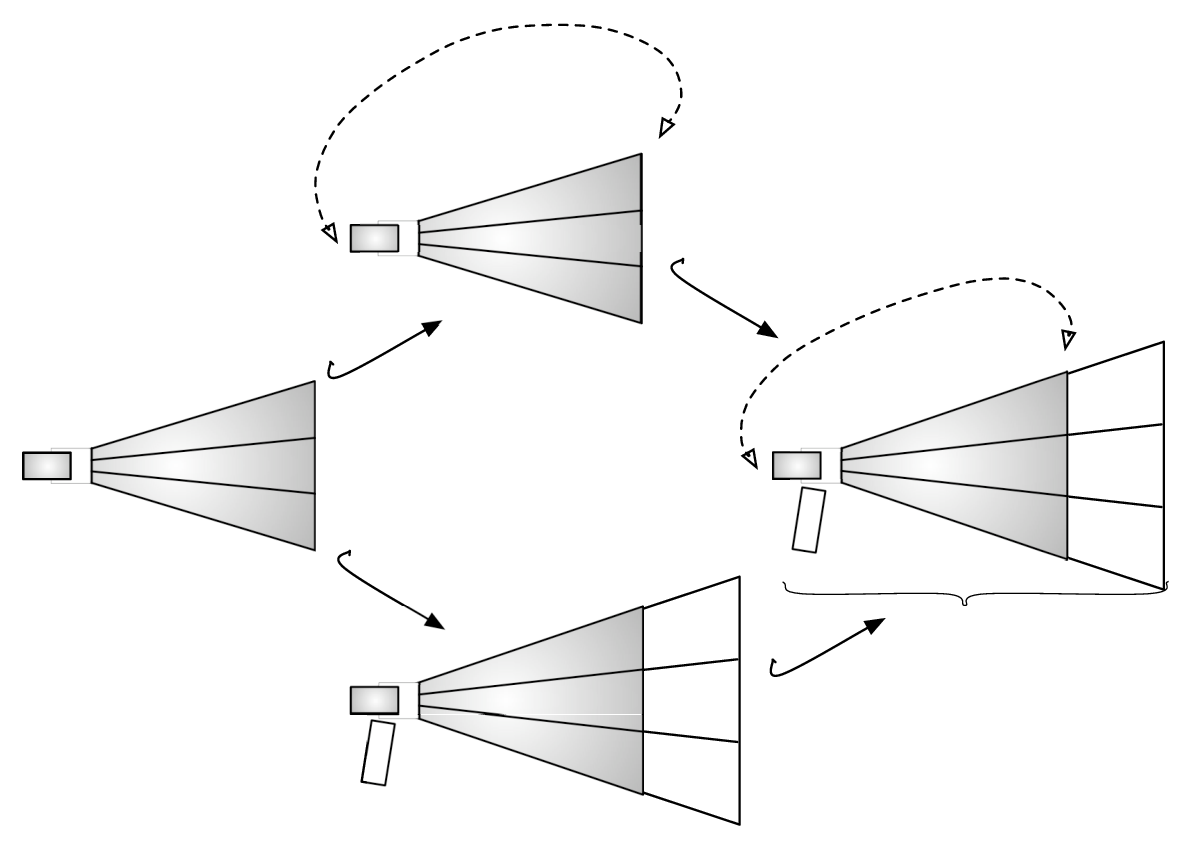}%
\drawat{-75.26mm}{93.04mm}{$\Ur _{\N}$}%
\drawat{-128.91mm}{44.55mm}{$G_0$}%
\drawat{-114.80mm}{46.80mm}{$O(\restrict{g_1}{G_0}, \Ur _{\N})$}%
\drawat{-114.80mm}{41.mm}{$O(\restrict{g_2}{G_0}, \Ur _{\N})$}%
\drawat{-114.80mm}{36.mm}{$O(\restrict{g_3}{G_0}, \Ur _{\N})$}%
\drawat{-93.35mm}{26.26mm}{$j_0$}%
\drawat{-93mm}{19mm}{$G_0$}%
\drawat{-95.19mm}{10mm}{$G$}%
\drawat{-77.50mm}{22.41mm}{$O(g_1, \Ur _{\N})$}%
\drawat{-77.50mm}{14mm}{$O(g_2, \Ur _{\N})$}%
\drawat{-77.50mm}{7mm}{$O(g_3, \Ur _{\N})$}%
\drawat{-54.64mm}{57.79mm}{$i_1$}%
\drawat{-91.30mm}{50.69mm}{$i_0$}%
\drawat{-24.49mm}{64.92mm}{$ i_1 \left(\Ur _{\N}\right)$}%
\drawat{-27.5mm}{23mm}{$\m{W}$}%
\drawat{-40.80mm}{19.mm}{$j_1$}%
\drawat{-50.80mm}{36mm}{$j_1\left(G\right)$}%
\end{center}
\caption{The space $\m{W}$}
\end{figure}

The crucial point here is that in $\m{W}$, every $x \in i_1 \left( U_{\N}\right)$ realizing some $\restrict{g}{G_0}$ over $i_1 \left( G_0\right)$ also realizes $g$ over $j_1 \left( G\right)$.  

Using $\m{W}$, we show how $\m{C}$ can be constructed inductively: consider an enumeration $\{ x_n : n \in \N\}$ of $i_1\left(U_{\N}\right)$ admitting $i_1\left(G_0\right)$ as an initial segment. Assume that the points $\varphi (x_0),\ldots , \varphi(x_n)$ are constructed so that: 
\begin{itemize}
	\item the map $\varphi$ is an isometry, 
	\item $\dom \varphi \subset i_1\left(U_{\N}\right)$,
	\item $\varphi (x_0),\ldots , \varphi(x_n) \in U_{\N}$,
	\item $\varphi(i_1(x)) = x$ whenever $x \in G_0$,
	\item $d^{\Ur _{\N}} (\varphi (x_k) , z) = d^{\m{W}} (x_k , j_1(z))$ whenever $z \in G$ and $k \leq n$.
\end{itemize}

We want to construct $\varphi (x _{n+1})$. Consider $e$ defined on $\{ \varphi (x_k) : k \leq n\} \cup G $ by: 
\begin{displaymath}
\left \{ \begin{array}{l}
 \forall k \leq n \ \ e(\varphi(x_k)) = d^{\m{W}} (x_k , x_{n+1}), \\
 \forall z \in G \ \ e(z) = d^{\m{W}} (j_1(z) , x_{n+1}).
 \end{array} \right.
\end{displaymath}

Observe that the metric subspace of $\m{W}$ given by $\{x_k : k \leq n+1\} \cup j_1\left(G\right) $ witnesses that $e$ is Kat\v{e}tov. It follows that the set $O(e, \Ur_{\N})$ is not empty and $\varphi (x_{n+1})$ can be chosen in it. $\qed$

\section{Proof of Theorem \ref{thm:U}} \label{section:U}

The purpose of this section is to prove Theorem \ref{thm:U}. So let $U = B \cup R$ and $\varepsilon > 0$. Assume that there is a compact metric subspace $\m{K}$ of $\Ur$ that does not embed in $(B)_{\varepsilon}$. We wish to show that $\Ur$ embeds in $(R)_{\varepsilon}$. 

\subsection{Proof of Theorem \ref{thm:U}}

We will use the result of Theorem \ref{thm:Uomega} as well as two technical lemmas, whose proofs are postponed to sections \ref{subsection:lemrich'} and \ref{subsection:lemrich''}. 

\begin{lemma}
\label{lem:rich'}
Let $q \in \N$ be positive. Then there is an isometric copy $\Ur_{\N/q} ^*$ of $\Ur_{\N/q}$ in $\Ur$ such that for every subspace $\mc{V}$ of $\Ur_{\N/q} ^* $ isometric
to $\Ur_{\N/q}$, the set $(\tilde{V})_{1/q}$ includes an isometric copy of $\Ur$.
\end{lemma}


The second lemma states that in $\Ur$, the copies of the compact space $\m{K}$ can be captured by a single finite metric subspace of $\Ur$: 

\begin{lemma}
\label{lem:rich''}
There is a finite metric subspace $\m{Y}$ of $\Ur$ with rational distances such that $\m{K}$ embeds in $(\tilde{Y})_{\varepsilon/2}$ for every subspace $\mc{Y}$ of $\Ur$ isometric to $\m{Y}$.
\end{lemma}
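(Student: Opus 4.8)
The plan is to establish Lemma~\ref{lem:rich''} by exploiting the compactness of $\m{K}$ together with the fact that $\Ur$ contains copies of every separable metric space with a great deal of homogeneity. First I would fix a finite $\varepsilon/2$-net $\{k_1,\ldots,k_N\}$ of $\m{K}$, so that every point of $\m{K}$ lies within $\varepsilon/2$ of some $k_i$. The distances $d^{\m{K}}(k_i,k_j)$ are arbitrary reals, so the next step is to replace them by nearby rationals: I would choose a finite metric space $\m{Y}=(\{y_1,\ldots,y_N\},d^{\m{Y}})$ with rational distances such that $|d^{\m{Y}}(y_i,y_j)-d^{\m{K}}(k_i,k_j)|$ is very small, say bounded by $\delta$ for a $\delta$ to be fixed in terms of $\varepsilon$. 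The point of this perturbation is that the map $k_i\mapsto y_i$ is then an approximate isometry, so any isometric copy of $\m{Y}$ in $\Ur$ serves as an approximate placement of the net of $\m{K}$; one must check that a small enough rational perturbation of a finite metric space remains a metric space, which is routine since the triangle inequalities are strict away from degenerate configurations and can be preserved under sufficiently fine approximation.

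The heart of the argument is then to show that, given any isometric copy $\mc{Y}$ of $\m{Y}$ in $\Ur$, the full compact space $\m{K}$ embeds into $(\tilde{Y})_{\varepsilon}$. Here I would use universality and ultrahomogeneity of $\Ur$: since $\Ur$ embeds every separable metric space, it embeds the space $\m{K}\cup\{y_1,\ldots,y_N\}$ obtained by gluing the net points $y_i$ to $\m{K}$ at the distances $d(y_i,k)=$ roughly $d^{\m{K}}(k_i,k)$ (perturbed to remain Kat\v{e}tov-consistent with the rational distances among the $y_i$). By ultrahomogeneity, the copy of $\{y_1,\ldots,y_N\}$ inside this embedded configuration can be carried by a global isometry of $\Ur$ onto the given $\mc{Y}$, transporting the attached copy of $\m{K}$ to a copy $\m{K}'$ in $\Ur$ whose net points sit exactly on the $\tilde{y}_i$. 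A point of $\m{K}'$ corresponding to $k\in\m{K}$ lies within $\varepsilon/2$ of some $k_i$, hence within $\varepsilon/2+\delta$ of the corresponding $\tilde{y}_i$; choosing $\delta\le\varepsilon/2$ guarantees it lies in $(\tilde{Y})_{\varepsilon}$, so $\m{K}'\subset(\tilde{Y})_{\varepsilon}$ as required.

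The main obstacle I anticipate is the bookkeeping in the gluing step: one must produce a genuine metric (not merely a family of target distances) on $\m{K}\cup\{y_1,\ldots,y_N\}$ that simultaneously (i) restricts to the exact rational metric $d^{\m{Y}}$ on the $y_i$ and (ii) places each $y_i$ within $\varepsilon/2$ of the point $k_i$ while keeping all triangle inequalities valid, and this requires the perturbation $\delta$ to be controlled against both $\varepsilon$ and the minimal gaps in the triangle inequalities of the net. The clean way to handle this is to define the distance from $y_i$ to a point $k\in\m{K}$ directly by a Kat\v{e}tov-type formula such as $\min_j\bigl(d^{\m{Y}}(y_i,y_j)+d^{\m{K}}(k_j,k)\bigr)$ truncated from below by $\max_j|d^{\m{Y}}(y_i,y_j)-d^{\m{K}}(k_j,k)|$, which is automatically a metric extension, and then verify that for $\delta$ small this formula indeed keeps $d(y_i,k_i)\le\varepsilon/2$. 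Once this single finite-dimensional estimate is in place, the embedding and ultrahomogeneity arguments are standard and the lemma follows.
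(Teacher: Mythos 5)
Your overall strategy is essentially the paper's: fix a finite $\varepsilon/2$-net of $\m{K}$, rationalize its distances to obtain $\m{Y}$, glue $\m{Y}$ to the original configuration at pointwise distance about $\varepsilon/2$, embed the glued space into $\Ur$, and use ultrahomogeneity to carry the distinguished copy of $\m{Y}$ onto an arbitrary copy $\mc{Y}$, dragging the attached copy of $\m{K}$ into $(\tilde{Y})_{\varepsilon}$. The one structural difference is that you glue $\m{Y}$ directly to the whole compact space $\m{K}$ and invoke universality of $\Ur$ for separable spaces, whereas the paper stays finite throughout: it first shows (by the same ultrahomogeneity-transport argument, their Claim 1) that every copy of the exact net $\m{Z}$ captures $\m{K}$, and then builds the finite two-level amalgam on $Z\times\{0,1\}$ with cross-distances $d^{\m{Z}}(z,z')+\varepsilon/2$, so that every copy of $\m{Y}$ captures a copy of $\m{Z}$. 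Your one-step shortcut is legitimate: $\m{K}\cup\{y_1,\ldots,y_N\}$ is separable, and ultrahomogeneity is applied only to the finite subspace carried onto $\mc{Y}$; your Kat\v{e}tov-type cross-distance is indeed $1$-Lipschitz and superadditive in each variable, so the gluing you flag as the main obstacle does go through (though a formula truncated only from below by $\max_j|d^{\m{Y}}(y_i,y_j)-d^{\m{K}}(k_j,k)|$ can return $0$ at the pair $(y_i,k_i)$; gluing at a fixed positive distance, $\min_j\bigl(d^{\m{Y}}(y_i,y_j)+\varepsilon/2+d^{\m{K}}(k_j,k)\bigr)$, avoids this and is what the paper's cross-distance amounts to).

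There is, however, one genuinely wrong assertion: that a sufficiently fine \emph{arbitrary} rational perturbation of the net's distance matrix remains a metric ``since the triangle inequalities are strict away from degenerate configurations.'' Degenerate configurations cannot be assumed away: if $\m{K}$ is, say, a segment of the real line, every net of it contains triples with $d(a,c)=d(a,b)+d(b,c)$, and then a generic small perturbation violates the triangle inequality no matter how fine it is. The repair is to perturb in a structured, monotone way rather than generically, and this is exactly what the paper does: round every distance up to the least multiple of $1/q$ via $\lceil\cdot\rceil_q$ with $1/q<\varepsilon/2$. Since $\lceil\cdot\rceil_q$ is increasing and subadditive, $\lceil d\rceil_q$ is automatically a metric, it is rational-valued, and each distance moves by less than $\varepsilon/2$, which is also what makes the gluing at distance $\varepsilon/2$ consistent. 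With your unspecified perturbation replaced by this rounding, your argument is complete and yields the lemma.
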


%
%

Assuming Lemma \ref{lem:rich'} and Lemma \ref{lem:rich''}, the proof of Theorem \ref{thm:U} goes as follows: choose $q \in \N$ large enough so that $1/q\leq \varepsilon$ and all distances appearing in $\m{Y}$ are integer multiples of $1/q$. The partition $U = B\cup R$ induces a partition of $U_{\N/q} ^*$ provided by Lemma \ref{lem:rich'}. Note that $\m{Y}$ does not embed in $B$: indeed, if a subspace $\mc{Y}$ of $B$ were isometric to $\m{Y}$, then $(\tilde{Y})_{\varepsilon} \subset (B)_{\varepsilon}$ and by Lemma \ref{lem:rich''}, the space $\m{K}$ would embed in $(B)_{\varepsilon}$, which is not the case. Observe now that by weak indivisibility of the space $\Ur_{\N}$ (Theorem \ref{thm:Uomega}), the space $\Ur_{\N/q}$ is weakly indivisible as well, so there is a subspace $\mc{V}$ of $\Ur_{\N/q} ^* $ isometric to $\Ur_{\N/q}$ such that $\tilde{V} \subset R$. By construction of $\Ur_{\N/q} ^*$, the set $(\tilde{V})_{1/q}$ includes an isometric copy $\mc{U}$ of $\Ur$. To complete the proof, notice that $\tilde{U} \subset (\tilde{V})_{1/q} \subset (\tilde{V})_{\varepsilon} \subset (R)_{\varepsilon}$.

\subsection{Proof Lemma \ref{lem:rich'}}

\label{subsection:lemrich'}

Lemma \ref{lem:rich'} is a modified version of a result proved in \cite{LANVT}, whose statement appears at the very beginning of Proposition 5. Its proof is an easy modification of Lemma 2 \cite{LANVT} and is included here for completeness. The core of the proof is contained in Lemma
\ref{lem:hedgehog} which we present now. Fix an enumeration $\{y_n : n \in \N \}$ of $U
_{\Q}$. For a number $\alpha$, let $\lceil \alpha \rceil _q$ denote the smallest number $\geq \alpha$ of the form $l/q$ with $l$ integer. The function $\lceil \cdot \rceil_q$ is subadditive and non decreasing. Hence, the composition $\lceil d^{\m{Z}}\rceil_q =\lceil \cdot \rceil_q \circ d^{\Ur _{\Q}}$ is a metric on $U_{\Q}$. Let $\m{X} _q$ be the metric space $(U _{\Q} , \left\lceil d^{\Ur _{\Q}} \right\rceil _q)$.
The underlying set of $\m{X} _q$ is really $\{y_n : n \in \N \}$ but to avoid confusion, we
write it $\{x_n : n \in \N \}$, being understood that for every $n \in \N$, $x_n = y_n$. On
the other hand, note that $\Ur _{\N/q}$ and $\m{X} _q$ embed isometrically into each other: $\m{X} _q$ embeds in $\Ur _{\N/q}$ because any countable metric space with distances in $\N/q$ embeds in $\Ur_{\N/q}$, and $\Ur_{\N/q}$ embeds in $\m{X}_q$ because any copy of $\Ur_{\N/q}$ in $\Ur_{\Q}$ remains isometric to $\Ur_{\N/q}$ in $\m{X}_q = (U _{\Q} , \left\lceil d^{\Ur _{\Q}} \right\rceil _q)$.

\begin{lemma}
\label{lem:hedgehog} There is a countable metric space $\m{Z}$ including $\m{X} _q$ such that for every strictly increasing $\sigma : \funct{\N}{\N}$
 such that  $x_n \mapsto x_{\sigma (n)}$ is an isometry, the set  $ (\{ x_{\sigma (n)} : n \in \N \})_{1/q}$
 includes an isometric copy of $\Ur _{\Q}$.
\end{lemma}
Assuming Lemma \ref{lem:hedgehog}, we now show how we can construct $\Ur ^* _{\N/q}$. The space $\m{Z}$ is
countable so we may assume that it is a subspace of $\Ur$. Now, take $\Ur
_{\N/q} ^*$ a subspace of $\m{X}_q$ and isometric to $\Ur _{\N/q}$. We claim that $\Ur _{\N/q} ^* $ works: let
$\mc{V}$ be a subspace of $\Ur _{\N/q} ^* $ isometric to $\Ur _{\N/q}$. We first show that $(\tilde{V})_{1/q}$ includes a copy of $\Ur _{\Q}$. The enumeration $\{x_n : n \in \N \}$ induces a linear ordering $<$ of $\tilde{V}$. According to Lemma \ref{lem:hedgehog}, it suffices to show that $(\mc{V},<)$ includes a copy of $\{x_n : n \in \N \}_<$ seen as an ordered metric space. To do that, observe that since $\m{X} _q$ embeds isometrically into $\Ur _{\N/q}$, there is a
linear ordering $<^*$ of $\Ur _{\N/q}$ such that $\{x_n : n \in \N \}_<$ embeds into
$(\Ur _{\N/q} , <^*)$ as ordered metric space. Therefore, it is enough to show:
\begin{claimm}
$(\mc{V},<)$ includes a copy of $(\Ur _{\N/q} , <^*)$.
\end{claimm}
\begin{proof}
Write
\begin{align*}
(U_{\N/q} , <^*)& = \{s_n : n \in \N \}_{<^*}\\
(\tilde{V},<) &  = \{t_n : n \in \N \}_{<}.
\end{align*}

Let $\sigma (0) = 0$. If $\sigma (0) < \dots < \sigma (n)$ are chosen such that $s_k \mapsto
t_{\sigma (k)}$ is a finite isometry, observe that the following set is infinite
\[\{ i \in \N : \forall k \leqslant n \ \ d^{\Ur _{\N/q}}(t_{\sigma (k)} , t_i) = d^{\Ur _{\N/q}}(s_k ,
s_{n+1})\}.\]

Therefore, simply take $\sigma (n+1) $ in that set and larger than $\sigma (n)$.
\end{proof} Observe that since the metric completion of $\Ur _{\Q}$ is $\Ur$,
the closure of $(\tilde{V})_{1/q}$ in $\Ur$ includes a  copy of $\Ur$. Hence we are done since
$(\tilde{V})_{1/q}$ is closed in $\Ur$.

We now turn to the proof of Lemma \ref{lem:hedgehog}. The strategy is first to provide the set $Z$
where the required metric space $\m{Z}$ is supposed to be based on, and then to argue that the
distance $d^{\m{Z}}$ can be obtained (Lemmas \ref{lem:3} to \ref{lem:6}). To construct $Z$, proceed
as follows: for $t \subset \N$, write $t$ as the strictly increasing enumeration of its
elements:

\begin{center}
$t = \{t_i : i \in |t| \}_<$.
\end{center}

Now, let $T$ be the set of all finite nonempty subsets $t$ of $\N$ such that $x_n \mapsto
x_{t_n}$ is an isometry between $\{ x_n : n \in |t| \}$ and $\{x_{t_n} : n \in |t|\}$. This
set $T$ is a tree (in the order-theoretic sense) when ordered by end-extension. Let

\begin{center}
$Z = X_q \overset{.}{\cup} T$.
\end{center}

For $z \in Z$, define
\begin{displaymath}
\pi(z) = \left \{ \begin{array}{cl}
 z & \textrm{if $z \in X_q$.} \\
 x_{\max z } & \textrm{if $z \in T$.}
 \end{array} \right.
\end{displaymath}

Now, consider an edge-labelled graph structure on $Z$ by defining   $\delta$ with domain
$\mathrm{dom} (\delta) \subset Z \times Z $ as follows:
\begin{itemize}
\item If $s, t \in T$, then $(s,t) \in \mathrm{dom}(\delta)$ iff $s$ and $t$ are $<_T$ comparable. In this case, \[\delta (s,t) = d^{\Ur _{\Q}} (y_{|s|-1 }, y_{|t|-1}).\]

\item If $x, y \in X_q$, then $(x,y)$ is always in $\mathrm{dom}(\delta)$ and
\[\delta (x,y) = d^{\m{X}_q} (x, y).\]

\item If $t \in T$ and $x \in X_q$, then $(x,s)$ and $(s,x)$ are in $\mathrm{dom}(\delta)$ iff $x = \pi (t)$. In this case
\[\delta (x,s) = \delta (s,x) = \frac1{q}.\]
\end{itemize}

For a branch $b$ of $T$ and $i \in \N$, let $b(i)$ be the unique element of $b$ with height $i$
in $T$. Observe that $b(i)$ is a $(i+1)$-element subset of $\N$. Observe also that for every $i,j \in \N$,
$b(i)$ is connected to $\pi (b(i))$ and $b(j)$, and
\begin{enumerate}
\item $\delta (b(i), \pi (b(i)) = 1/{q}$,
\item $\delta (b(i), b(j)) = d^{\Ur _{\Q}} (y_i , y_j)$,
\item $\delta(\pi(b(i)),\pi(b(j)))$ is equal to any of the following quantities: \[d^{\m{X}_q}(x_{\max b(i)},x_{\max b(j)})= d^{\m{X}_q}(x_i,x_j)=  \lceil d^{\Ur_Q}(y_i,y_j) \rceil_q.\]
\end{enumerate}

In particular, if $b$ is a branch of $T$, then $\delta$ induces a metric on $b$ and the map from
$\Ur _{\Q}$ to $b$ mapping  $y_i$ to $b(i)$ is a surjective isometry. We claim that if we can show
that $\delta$ can be extended to a metric $d^{\m{Z}}$ on $Z$, then Lemma
\ref{lem:hedgehog} will be proved. Indeed, let
\[\tilde{X} _q = \{ x_{\sigma (n)} : n \in \N \} \subset X _q,\] with $\sigma : \funct{\N}{\N}$ strictly increasing and
$x_n \mapsto x_{\sigma (n)}$ distance preserving. See $\mathrm{ran}(\sigma)$, the range of $\sigma$, as a branch $b$ of $T$. Then
$(b, d^{\m{Z}}) = (b, \delta)$ is isometric to $\Ur _{\Q}$ and
\[b \subset (\pi (b) )_{ 1/{q}} = (\tilde{X} _q)_{1/{q}} .\]

Our goal now is consequently to show that $\delta$ can be extended to a metric on $Z$ with values
in $[0,1]$. For $x, y \in Z$, and $n \in \N$ strictly positive, define \emph{a path from $x$ to
$y$ of size $n$} as a finite sequence $\gamma = (z_i)_{i<n}$ such that $z_0 = x$, $z_{n-1} = y$
and for every $i<n-1$,
\[(z_i, z_{i+1}) \in \dom(\delta).\]

For $x, y$ in $Z$, let $P(x,y)$ be the set of all paths from $x$ to $y$. If $\gamma = (z_i)_{i<n}$ is
in $P(x,y)$, $ \| \gamma \|$ is defined as:
\[ \| \gamma \| = \sum _{i=0} ^{n-1} \delta (z_i , z_{i+1} ).\]

We are going to see that the required metric can be obtained with
$d^{\m{Z}}$ defined by
\[ d^{\m{Z}}(x,y) = \inf \{ \| \gamma \| : \gamma \in P(x,y)\} .\]

Equivalently, we are going to show that for every  $(x,y) \in \dom (\delta)$, every path $\gamma$
from $x$ to $y$ is \emph{metric}, i.e. satisfies the following inequality: 
\begin{equation}
\label{hojthurhgr}\delta (x,y) \leqslant \| \gamma \|. 
\end{equation}

Let $x, y \in Z$. Call a path $\gamma$ from $x$ to $y$ \emph{trivial} when $\gamma = (x,y)$ and
\emph{irreducible} when no proper subsequence of $\gamma$ is a non-trivial path from $x$ to $y$.
Finally, say that $\gamma$ is a \emph{cycle} when $(x,y) \in \dom (\delta)$. It should be clear
that to prove that $d^{\m{Z}}$ works, it is enough to show that the previous inequality
\eqref{hojthurhgr} is true for every irreducible cycle. Note that even though $\delta$ takes only
rational values, it might not be the case for $d^{\m{Z}}$. We now turn to the study of the
irreducible cycles in $Z$.

\begin{lemma}

\label{lem:3}

Let $x, y \in T$. Assume that $x$ and $y$ are not $<_T$-comparable. Let $\gamma$ be an irreducible
path from $x$ to $y$ in $T$. Then there is $z \in T$ such that $z <_T x$, $z <_T y$ and $\gamma =
(x,z,y)$.

\end{lemma}

\begin{proof}

Write $\gamma = (z_i)_{i<n+1}$. $z_1$ is connected to $x$ so $z_1$ is $<_T$-comparable with $x$.
We claim that $z_1 <_T x$ : otherwise, $x <_T z_1$ and every element of $T$ which is
$<_T$-comparable with $z_1$ is also $<_T$-comparable with $x$. In particular, $z_2$ is
$<_T$-comparable with $x$, a contradiction since $z_2$ and $x$ are not connected. We now claim that
$z_1 <_T y$. Indeed, observe that $z_1 <_T z_2$ : otherwise, $z_2 <_T z_1 <_T x$ so $z_2 <_T x$
contradicting irreducibility. Now, every element of $T$ which is $<_T$-comparable with $z_2$ is
also $<_T$-comparable with $z_1$, so no further element can be added to the path. Hence $z_2 = y$
and we can take $z_1 = z$. \end{proof}

\begin{lemma}

\label{lem:4}

Every non-trivial irreducible cycle in $X_q$ has size $3$.

\end{lemma}

\begin{proof}

Obvious since $\delta $ induces the metric $d^{\m{X}_q}$ on $X_q$. \end{proof}

\begin{lemma}

\label{lem:4'}

Every non-trivial irreducible cycle in $T$ has size $3$ and is included in a branch.

\end{lemma}

\begin{proof}

Let $c = (z_i)_{i<n}$ be a non-trivial irreducible cycle in $T$. We may assume that $z_0 <_T
z_{n-1}$. Now,  observe that every element of $T$ comparable with $z_0$ is also comparable with
$z_{n-1}$. In particular, $z_1$ is such an element. It follows that $n = 3$ and that $z_0, z_1,
z_2$ are in a same branch. \end{proof}

\begin{lemma}

\label{lem:5}

Every irreducible cycle in $Z$ intersecting both $X_q$ and $T$ is supported by a set whose form is one of the following ones.
\begin{center}
\begin{figure}[h]
\includegraphics[scale=0.73]{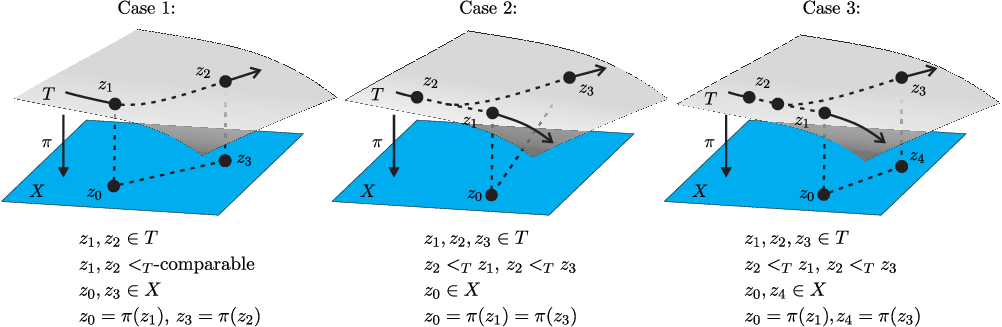}
\caption{Irreducible cycles}\label{figu1}
\end{figure}
\end{center}
%

\end{lemma}

\begin{proof}

Let $C$ be a set supporting an irreducible cycle $c$ intersecting both $X_q$ and $T$. It should be
clear that $|C \cap X_q| \leqslant 2$: otherwise since any two points in $X_q$ are connected, $c$ would admit a strict subcycle, contradicting irreducibility.

If $C \cap X_q$ has size $1$, let $z_0$ be its unique element. In $c$, $z_0$ is connected to two
elements which we denote $z_1$ and $z_3$. Note that $z_1, z_3 \in T$ so $\pi (z_1) = \pi (z_3) =
z_0$. Since elements in $T$ which are connected never project on a same point, it follows that
$z_1, z_3$ are $<_T$-incomparable. Now, $c$ induces an irreducible path from $z_1$ to $z_3$ in $T$
so from Lemma \ref{lem:3}, there is $z_2 \in C$ such that $z_2 <_T z_1$, $z_2 <_T z_3$, and we are
in case 2.

Assume now that $C \cap X_q = \{ z_0 , z_4 \}$. Then there are $z_1, z_3 \in C \cap T$ such that
$\pi(z_1) = z_0$  and $\pi(z_3) = z_4$. Note that since $z_0 \neq z_4$, we must have $z_1 \neq
z_3$. Now, $C \cap T$ induces an irreducible path from $z_1$ to $z_3$ in $T$. By Lemma \ref{lem:3},
either $z_1$ and $z_3$ are compatible and in this case, we are in case 1, or $z_1$ and $z_3$ are
$<_T$-incomparable and there is $z_2$ in $C\cap T$ such that $z_2 <_T z_1$, $z_2 <_T z_3$ and we
are in case 3. \end{proof}

\begin{lemma}

\label{lem:6}

Every non-trivial irreducible cycle in $Z$ is metric.

\end{lemma}

\begin{proof}
Let $c$ be an irreducible cycle in $Z$. If $c$ is supported by $X_q$, then, by Lemma \ref{lem:4}, $c$
has size $3$ and  is metric since $\delta$ induces a metric on $X_q$. If $c$ is supported by $T$,
then, by Lemma \ref{lem:4'}, $c$ also has size $3$ and is included in a branch $b$ of $T$. Since
$\delta$ induces a metric on $b$, $c$ is metric. We consequently assume that $c$ intersects both
$X_q$ and $T$. According to Lemma \ref{lem:5}, $c$ is supported by a set $C$ whose form is covered
by one of the cases 1, 2 or 3. So to prove the present lemma, it is enough to show every cycle
obtained from a re-indexing of the cycles described in those cases is metric.

Case 1: The required inequalities are obvious after having observed that \[\delta (z_0 , z_3) = \left\lceil \delta (z_1, z_2) \right\rceil _q  \text{ and }  \delta
(z_0 , z_1) = \delta (z_2 , z_3) = \frac1{q}.\]

Case 2: Notice that $\delta (z_0 , z_1) = \delta (z_0 , z_3) = 1/q$. So the inequalities we need to prove are \begin{align}
\delta (z_1 , z_2) & \leqslant   \delta (z_2, z_3) + \frac2q, \label{lulu1}\\
\delta (z_2 , z_3) & \leqslant  \delta (z_1, z_2) + \frac2q. \label{lulu2}
\end{align}

By symmetry, it suffices to verify that \eqref{lulu1} holds. Observe that since $\pi (z_1) = \pi (z_3) = z_0$, we must have $\left\lceil \delta (z_1 , z_2) \right\rceil_q=\left\lceil \delta(z_2 , z_3)\right\rceil_q$. So: \[\delta (z_1 , z_2) \leqslant  \left\lceil \delta (z_1 , z_2) \right\rceil _q = \left\lceil \delta (z_2 , z_3) \right\rceil _q \leqslant  \delta (z_2 , z_3) + \frac2q.\]

Case 3: Observe that $\delta (z_0 , z_1) = \delta (z_3 , z_4) = 1/q$, so the inequalities we need
to prove are
\begin{align}
\delta (z_1 , z_2) & \leqslant   \delta (z_2, z_3) + \delta (z_0 , z_4) + \frac2q, \label{ohrtjuer1}\\
\delta (z_0 , z_4) & \leqslant  \delta (z_1, z_2) + \delta (z_2 , z_3) + \frac2q. \label{ohrtjuer2}
\end{align}

For \eqref{ohrtjuer1}:
\begin{align*}
\delta (z_1 , z_2) & \leqslant  \left\lceil \delta (z_1 , z_2) \right\rceil _q \\
& =  \delta (\pi (z_1) , \pi (z_2) ) \\
& =  \delta (z_0 , \pi (z_2) ) \\
& \leqslant  \delta (z_0 , z_4) + \delta (z_4 , \pi (z_2)) \\
& =  \delta (z_0 , z_4) + \left\lceil \delta (z_3 , z_2) \right\rceil _q \\
& \leqslant  \delta (z_0 , z_4) + \delta (z_2 , z_3) + \frac2q.
\end{align*}

For \eqref{ohrtjuer2}: Write $z_1 = b(j)$, $z_3 = b'(k)$, $z_2 = b(i) = b'(i)$. Then $z_0 = \pi
(z_1) = x_{\max b(j)}$ and $z_4 = \pi (z_3) = x_{\max b'(k)}$. Observe also that $\delta (z_1 ,
z_2) = d^{\Ur _{\Q}}(y_j , y_i)$ and that $\delta (z_2 , z_3) = d^{\Ur _{\Q}}(y_i , y_k)$. So:
\begin{align*}
\delta (z_0 , z_4) & =  d^{\m{X}_q}(x_{\max b(j)}, x_{\max b'(k)})\\
& \leqslant  d^{\m{X}_q}(x_{\max b(j)}, x_{\max b(i)}) + d^{\m{X}_q}(x_{\max b'(i)}, x_{\max b'(k)}) \\
& = d^{\m{X}_q}(x_j, x_i) + d^{\m{X}_q}(x_i, x_k) \\
& =  \left\lceil d^{\Ur _{\Q}}(y_j, y_i)\right\rceil _q + \left\lceil d^{\Ur _{\Q}}(y_i, y_k) \right\rceil _q \\
& = \left\lceil \delta (z_1, z_2) \right\rceil _q + \left\lceil \delta (z_2 , z_3) \right\rceil _q \\
& \leqslant  \delta (z_1, z_2) + \frac1q + \delta (z_2 , z_3) + \frac1q \\
& = \delta (z_1, z_2) + \delta (z_2 , z_3) + \frac2q. \qedhere
\end{align*}
\end{proof}

\subsection{Proof of Lemma \ref{lem:rich''}}

\label{subsection:lemrich''}

Using compactness of $\m{K}$, find a finite subspace $\m{Z}$ of $\m{K}$ such that $K \subset (Z)_{\varepsilon/2}$. 

\begin{claimm}
\label{claim:K}
The space $\m{K}$ embeds in $(\tilde{Z})_{\varepsilon}$ for every subspace $\mc{Z}$ of $\Ur$ isometric to $\m{Z}$. 
\end{claimm}

\begin{proof}
This follows from ultrahomogeneity of $\Ur$: if $\mc{Z}$ is a subspace of $\Ur$ isometric to $\m{Z}$, let $\phi : \funct{Z}{\tilde{Z}}$ be an isometry. By ultrahomogeneity of $\Ur$, find $\Phi:\funct{U}{U}$ extending $\phi$. Then $\Phi(K)$ is isometric to $\m{K}$ and is included in \[ \Phi((Z)_{\varepsilon/2}) = (\Phi(Z))_{\varepsilon/2} = (\tilde{Z})_{\varepsilon/2}. \qedhere\] 
\end{proof}

Therefore, the space $\m{Z}$ is almost as required except that it may not have rational distances. To arrange that, consider $q \in \N$ large enough so that $1/q < \varepsilon /2$. Recall that for a number $\alpha$, $\lceil \alpha \rceil _q$ denotes the smallest number $\geq \alpha$ of the form $l/q$ with $l$ integer. The function $\lceil \cdot \rceil_q$ is subadditive and non decreasing. Hence, the composition $\lceil d^{\m{Z}}\rceil_q =\lceil \cdot \rceil_q \circ d^{\m{Z}}$ is a metric on $Z$. Let $\m{Y}$ be defined as the metric space $(Z,\lceil d^{\m{Z}}\rceil_q)$. It obviously has rational distances. We are going to show that it is as required. Consider the set $X = Z \times \{ 0, 1\}$ and define

\begin{displaymath}
\delta((z, i),(z',i')) = \left \{ \begin{array}{ll}
 d^{\m{Z}}(z,z')  & \textrm{if $i=i'=0$,} \\
 \lceil d^{\m{Z}} (z,z')\rceil_q  & \textrm{if $i=i'=1$,} \\
 d^{\m{Z}}(z,z') + \varepsilon/2 & \textrm{if $i\neq i'$.}
 \end{array} \right.
\end{displaymath}

In spirit, the structure $(X,\delta)$ is obtained by putting a copy of $\m{Y} \ (=(Z,\lceil d^{\m{Z}}\rceil_q))$ above a copy of $\m{Z}$ such that the distance between any point $(z,0) \in Z\times \{ 0\}$ and its counterpart $(z,1)$ in $Z\times \{ 1\}$ is $\varepsilon/2$. 

\begin{claimm}
The map $\delta$ is a metric on $X$. 
\end{claimm}

\begin{proof}

The maps $d^{\m{Z}}$ and $\lceil d^{\m{Z}}\rceil_q$ being metrics on $Z\times\{ 0\}$ and $Z\times \{1\}$, it suffices to verify that the triangle inequality is satisfied on triangles of the form $\{(x,0), (y,0), (z,1)\}$ and $\{(x,1), (y,1), (z,0)\}$, with $x, y, z \in Z$.

Assume that $x, y, z \in Z$, and consider the triangle $\{(x,1), (y,1), (z,0)\}$. Then \begin{align*}
\delta ((x,1) , (z,0)) & = d^{\m{Z}} (x,z) + \frac{\varepsilon}{2} \\
& \leq d^{\m{Z}} (x,y) + d^{\m{Z}} (y,z) + \frac{\varepsilon}{2} \\
& \leq \left\lceil d^{\m{Z}} (x,y) \right\rceil_q + d^{\m{Z}} (y,z) + \frac{\varepsilon}{2} \\
& \leq \delta ((x,1),(y,1)) + \delta ((y,1),(z,0)).
\end{align*}

Similarly, \[\delta ((y,1) , (z,0)) \leq \delta ((y,1),(x,1)) + \delta ((x,1),(z,0)).\] 

And finally, \begin{align*}
\delta ((x,1) , (y,1)) & = \left\lceil d^{\m{Z}} (x,y) \right\rceil_q \\
& \leq d^{\m{Z}} (x,y) + \frac{1}{q} \\
& \leq d^{\m{Z}} (x,y) + \frac{\varepsilon}{2} \\
& \leq d^{\m{Z}} (x,z) + d^{\m{Z}} (z,y) + \frac{\varepsilon}{2} \\
& \leq d^{\m{Z}} (x,z) + \frac{\varepsilon}{2} + d^{\m{Z}} (z,y) + \frac{\varepsilon}{2} \\
& \leq \delta ((x,1),(z,0)) + \delta ((z,0),(y,1)).
\end{align*}

Next, consider the triangle $\{(x,0), (y,0), (z,1)\}$. We have \begin{align*}
\delta ((x,0) , (z,1)) & = d^{\m{Z}} (x,z) + \frac{\varepsilon}{2} \\
& \leq d^{\m{Z}} (x,y) + d^{\m{Z}} (y,z) + \frac{\varepsilon}{2} \\
& \leq \delta ((x,0),(y,0)) + \delta ((y,0),(z,1)).
\end{align*}

Similarly, \[\delta ((y,0) , (z,1)) \leq \delta ((y,0),(x,0)) + \delta ((x,0),(z,1)).\] 

Finally, \begin{align*}
\delta ((x,0) , (y,0)) & = d^{\m{Z}} (x,y) \\
& \leq d^{\m{Z}} (x,z) + d^{\m{Z}} (z,y) \\
& \leq d^{\m{Z}} (x,z) + \frac{\varepsilon}{2} + d^{\m{Z}} (z,y) + \frac{\varepsilon}{2}\\
& \leq \delta ((x,0),(z,1)) + \delta ((z,1),(y,0)). \ \ \qedhere
\end{align*}

\end{proof}

Denote the space $(X,\delta)$ by $\m{X}$. Recall that every finite metric space embeds isometrically in $\Ur$. Hence, without loss of generality, we may suppose $Y \subset X \subset U$. We claim that $\m{Y}$ is as required. By construction, the space $\m{Y}$ is a finite subspace of $\Ur$ with distances in $\N /q$. Observe that $X \subset (Y)_{\varepsilon/2}$. Assume that a subspace $\mc{Y}$ of $\Ur$ is isometric to $\m{Y}$. By an argument similar to the one used in Claim \ref{claim:K}, the space $\m{X}$ embeds in $(\tilde{Y})_{\varepsilon/2}$. Thus, because $\m{Z}$ embeds in $\m{X}$, the set $(\tilde{Y})_{\varepsilon/2}$ contains a copy of $\m{Z}$, call it $\mc{Z}$. By Claim \ref{claim:K}, the set $(\tilde{Z})_{\varepsilon/2}$ contains a copy of $\m{K}$, call it $\mc{K}$. Then \[\tilde{K} \subset (\tilde{Z})_{\varepsilon/2} \subset ((\tilde{Y})_{\varepsilon/2})_{\varepsilon/2} \subset (\tilde{Y})_{\varepsilon}.\] This finishes the proof of Lemma \ref{lem:rich''}. 

\section{Proof of Theorem \ref{thm:UQ}} \label{section:UQ}

In this section, we think of $\Ur_{\Q}$ as a dense metric subspace of $\Ur$. We fix a partition of $\Ur_{\Q}$ as well as $\varepsilon > 0$, and we assume that there is a finite metric subspace $\m{Y}$ of $\Ur_{\Q}$ that does not embed in $B$. Our goal is to show that $\Ur_{\Q}$ embeds in $(R)_{\varepsilon}$. We start with the following technical lemma: 

\begin{lemma}

\label{thm: s_Q in s}

Let $\m{V}$ be a countable subspace of $\Ur$ with rational distances. Then for every $\varepsilon>0$ the  subspace $\Ur_{\Q} \cap (V)_{\varepsilon}$ includes a copy of $\m{V}$.
\end{lemma}

Assuming this result for the moment, here is how we prove Theorem \ref{thm:UQ}: let $U_{\Q} = B \cup R$ and $\varepsilon > 0$, and assume that there is a finite metric subspace $\m{Y}$ of $\Ur_{\Q}$ that does not embed in $B$. We wish to show that $\Ur_{\Q}$ embeds in $(R)_{\varepsilon}$. Choose $q \in \N$ large enough so that $2/q\leq \varepsilon$ and all distances appearing in $\m{Y}$ are integer multiples of $1/q$. Working in $\Ur$, set \[ B^* = \{ x \in U : (\{ x\})_{1/2q} \cap U_{\Q} \subset B\},\] \[R^*=U \smallsetminus B^*= U \cap (R)_{1/2q}.\]

Consider the space $U_{\N/q} ^*$ coming from Lemma \ref{lem:rich'}. The partition $U = B^* \cup R^*$ induces a partition of $U_{\N/q} ^*$. Observe that by weak indivisibility of $\Ur_{\N}$, the space $\Ur_{\N/q}$ is weakly indivisible as well. We also claim that the space $\m{Y}$ does not embed in $U_{\N/q} ^* \cap B^*$. Indeed, otherwise, we could find a copy $\mc{Y}$ of $\m{Y}$ in $U_{\N/q} ^* \cap B^*$. Lemma \ref{thm: s_Q in s} applied to $\m{V} = \m{Y}$ would then guarantee that $U _{\Q} \cap (\tilde{Y})_{1/2q}$ contains a copy of $\m{Y}$. But by construction, $U _{\Q} \cap (\tilde{Y})_{1/2q} \subset B$. So $\m{Y}$ would embed in $B$, a contradiction. Therefore, by weak indivisibility of $\Ur_{\N/q}$, there is a subspace $\m{T}$ of $\Ur_{\N/q} ^* $ isometric to $\Ur_{\N/q}$ such that $T\subset R^*$. By construction of $\Ur_{\N/q} ^*$, the set $(T)_{1/q}$ includes an isometric copy of $\Ur$, hence an isometric copy $\mc{U}$ of $\Ur_{\Q}$. By Lemma \ref{thm: s_Q in s} applied to $\m{V}=\mc{U}$, the set $U_{\Q} \cap (\tilde{U})_{1/2q}$ contains a copy of $\Ur_{\Q}$. Observe now that: \[ U_{\Q} \cap (\tilde{U})_{1/2q} \subset ((T)_{1/q})_{1/2q} \subset ((R^*)_{1/q})_{1/2q} \subset (((R)_{1/2q})_{1/q})_{1/2q} \subset (R)_{2/q} \subset (R)_{\varepsilon}.\]

Theorem \ref{thm:UQ} is proved. The rest of this section is therefore devoted to a proof of Lemma \ref{thm: s_Q in s}.

\begin{proof}[Proof of Lemma \ref{thm: s_Q in s}]

The proof of this lemma closely follows the proof of \cite{LANVT} Proposition 2, and is included here for completeness. We construct the required copy of $\m{V}$ inductively. Consider $\{ y_n : n \in \N \}$ an enumeration of $\m{V}$. For $k \in \N$, set \[\delta _k = \frac{\varepsilon}{2} \sum
_{i = 0} ^k \frac{1}{2^i}.\] 

Set also
 \[\eta _k = \frac{\varepsilon}{3} \frac{1}{2^{k+1}}.\]
 
$\Ur_{\Q}$ being dense in $\Ur$, choose $z_0 \in \Ur _{\Q}$ such that $d^{\Ur}(y_0 , z_0) < \delta _0$.
Assume now that $z_0,\dots, z_n \in \Ur _{\Q}$ were constructed such that for every $k, l \leqslant
n$
\begin{displaymath}
\left \{ \begin{array}{l}
 d^{\Ur}(z_k , z_l)=d^{\Ur} (y_k , y_l), \\
 d^{\Ur} (z_k , y_k) < \delta _k.
 \end{array} \right.
\end{displaymath}

Again by denseness of $\Ur_{\Q}$ in $\Ur$, fix $z \in \Ur _{\Q}$ such that

\begin{center}
$d^{\Ur} (z,y_{n+1}) < \eta _{n+1}$.
\end{center}

Then for every $k \leqslant n$,
\begin{align*}
\left| d^{\Ur} (z , z_k) - d^{\Ur} (y_{n+1} , y_k) \right| & =  \left| d^{\Ur} (z , z_k) - d^{\Ur}(z_k
, y_{n+1})  + d^{\Ur}(z_k , y_{n+1})
 - d^{\Ur} (y_{n+1} , y_k) \right| \\
 & \leqslant  d^{\Ur} (z , y_{n+1}) + d^{\Ur} (z_k , y_k) \\
 & <  \eta _{n+1} + \delta _k \\
 & <  \eta _{n+1} + \delta_n.
\end{align*}

It follows that there is $z_{n+1} \in \Ur _{\Q}$ such that
\begin{displaymath}
\left \{ \begin{array}{l}
 \forall k \leqslant n \ \ d^{\Ur} (z_{n+1} , z_{k}) = d^{\Ur} (y_{n+1} , y_k) \\
 d^{\Ur} (z_{n+1} , z) < \eta _{n+1} + \delta _n.
 \end{array} \right.
\end{displaymath}

Indeed, consider the map $f$ defined on $\{ z_k : k \leqslant n \} \cup \{ z \}$ by:
\begin{displaymath}
\left \{ \begin{array}{l}
 \forall k \leqslant n \ \ f(z_k) = d^{\Ur} (y_{n+1} , y_k), \\
 f(z) =  \max\{\left| d^{\Ur} (z , z_k) - d^{\Ur} (y_{n+1} , y_k) \right| : k \leqslant n \}.
 \end{array} \right.
\end{displaymath}

\begin{claimm}
$f$ is Kat\v{e}tov.
\end{claimm}

\begin{proof}

The metric space $\{y_k: k \leqslant n+1 \}$ witnesses that $f$ is Kat\v{e}tov over the set $\{ z_k : k \leqslant n \}$ so it suffices to prove that for every $k\leqslant n$, \[ \left| f(z) - f(z_k) \right| \leqslant d^{\Ur}(z,z_k) \leqslant f(z) + f(z_k).\] 

Equivalently, for every $k\leqslant n$, \[ \left| d^{\Ur}(z,z_k) - f(z_k) \right| \leqslant f(z) \leqslant d^{\Ur}(z,z_k) + f(z_k).\] 

There is nothing to do for the left-hand side because by definition of $f$, we have \[ f(z) =  \max\{\left| d^{\Ur} (z , z_k) - f(z_k) \right| : k \leqslant n \}.\] 

For right-hand side, what we need to show is that for every $k,l\leqslant n$, \[ \left|d^{\Ur} (z , z_l) - d^{\Ur} (y_{n+1} , y_l)\right| \leqslant d^{\Ur} (z , z_k) + d^{\Ur} (y_{n+1} , y_k).\]

Equivalently, 

\begin{displaymath}
\left \{ \begin{array}{l}
 d^{\Ur} (z , z_l) - d^{\Ur} (y_{n+1} , y_l) \leqslant d^{\Ur} (z , z_k) + d^{\Ur} (y_{n+1} , y_k), \\
 d^{\Ur} (y_{n+1} , y_l) - d^{\Ur} (z , z_l) \leqslant d^{\Ur} (z , z_k) + d^{\Ur} (y_{n+1} , y_k).
 \end{array} \right.
\end{displaymath}

The first inequality is equivalent to \[ d^{\Ur} (z , z_l) - d^{\Ur} (z , z_k) \leqslant d^{\Ur} (y_{n+1} , y_k) + d^{\Ur} (y_{n+1} , y_l).\]

But this is satisfied because \[ d^{\Ur} (z , z_l) - d^{\Ur} (z , z_k) \leqslant d^{\Ur}(z_l , z_k) = d^{\Ur}(y_k,y_l) \leqslant d^{\Ur} (y_k , y_{n+1}) + d^{\Ur} (y_{n+1} , y_l).\]

Similarly, the second inequality is equivalent to \[ d^{\Ur} (y_{n+1} , y_l) - d^{\Ur} (y_{n+1} , y_k) \leqslant d^{\Ur} (z , z_k) + d^{\Ur} (z , z_l).\]

This holds because \[ d^{\Ur} (y_{n+1} , y_l) - d^{\Ur} (y_{n+1} , y_k) \leqslant d^{\Ur}(y_k,y_l) = d^{\Ur}(z_k,z_l) \leqslant d^{\Ur} (z , z_k) + d^{\Ur} (z , z_l). \qedhere\]

\end{proof}

The map $f$ being Kat\v{e}tov, consider a point $z_{n+1} \in \Ur _{\Q}$ realizing $f$ over the set $\{ z_k : k \leqslant n \} \cup \{ z \}$. Observe then that
\begin{eqnarray*}
d^{\Ur}(z_{n+1} , y_{n+1}) & \leqslant & d^{\Ur}(z_{n+1},z) + d^{\Ur}(z,y_{n+1})\\
& < & \eta _{n+1} + \delta _n + \eta _{n+1}\\
& < & \delta _{n+1}.
\end{eqnarray*}

After infinitely many steps, we are left with $\{ z_n : n \in \N \} \subset \Ur _{\Q} \cap
(V)_{\varepsilon}$ isometric to $\m{V}$. \end{proof}

\section{Age-indivisibility may not imply weak indivisibility} \label{section:age not weak}

In what follows, the set $S$ is a fixed dense subset of $[0,2]$. Let $\E _S$ be the class of all finite metric spaces $\m{X}$ with distances in $S$ which embed isometrically into the unit sphere $\mathbb{S} ^{\infty}$ of $\ell _2$ with the property that $\{ 0_{\ell _2} \} \cup \m{X}$ is affinely independent. 

\begin{claimm}
There is a unique countable ultrahomogeneous metric space $\s _S$ whose class of finite metric subspaces is exactly $\E _S$. Moreover, the metric completion of $\s _S$ is $\s$.
\end{claimm}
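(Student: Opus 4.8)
The plan is to prove this existence-and-uniqueness claim via the standard Fra\"{\i}ss\'e-theoretic machinery, so the work reduces to checking that $\E_S$ is a Fra\"{\i}ss\'e class, i.e.\ that it is a class of finite metric spaces closed under isometric isomorphism, hereditary (closed under taking subspaces), and satisfies the joint embedding and amalgamation properties. Uniqueness of the ultrahomogeneous limit is then automatic from Fra\"{\i}ss\'e's theorem, since $\E_S$ is clearly countable up to isometry (distances lie in the countable set $S$). The statement about the completion will be handled separately at the end.

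First I would verify heredity and closure under isomorphism, which are immediate: a subspace of a space that embeds into $\s$ with $\{0_{\ell_2}\}\cup\m{X}$ affinely independent inherits both properties, since restricting to a subset of an affinely independent set keeps it affinely independent and keeps the embedding into $\s$. Joint embedding follows from amalgamation together with the fact that $\E_S$ contains a one-point (in fact two-point, for $S$ nonempty) space, so I would focus the real effort on amalgamation. Here the geometric content enters: given $\m{X}_0 \in \E_S$ embedded in $\s$ and two one-point extensions $\m{X}_0\cup\{a\}$ and $\m{X}_0\cup\{b\}$ in $\E_S$, I must realize both extension points simultaneously in $\s$, keeping all distances in $S$ and keeping the whole configuration together with $0_{\ell_2}$ affinely independent. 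The idea is that since $\{0_{\ell_2}\}\cup \m{X}_0$ is affinely independent and finite, it spans a proper finite-dimensional subspace; one can place the images of $a$ and $b$ using the abundant remaining dimensions of $\ell_2$, and then perturb within $S$ (using density of $S$ in $[0,2]$) to fix the single free distance $d(a,b)$ while preserving affine independence of the amalgam. The perturbation is where density of $S$ is essential: the set of admissible values for $d(a,b)$ that keep the configuration embeddable in $\s$ and affinely independent is an interval of positive length, so it meets $S$.

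The step I expect to be the main obstacle is precisely this amalgamation, because it is not purely combinatorial: I must argue that the constraint ``$\{0_{\ell_2}\}\cup\m{X}$ affinely independent and $\m{X}\hookrightarrow\s$'' is itself amalgamable, which requires showing that the geometric realization can always be achieved in the sphere with one degree of freedom left over, and that affine independence is a generic (open, dense) condition that survives a small perturbation into $S$. I would make this rigorous by reducing embeddability of a finite metric space into $\s$ to positive-definiteness of the associated Gram matrix of the vectors (equivalently, to conditions on the matrix of inner products $\langle x_i,x_j\rangle = 1-\tfrac12 d(x_i,x_j)^2$), noting that affine independence of $\{0_{\ell_2}\}\cup\m{X}$ corresponds to strict positive-definiteness of this Gram matrix. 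The admissible values of the one free parameter then form an open set cut out by a determinant (Schur complement) inequality, hence an interval, hence nonempty intersection with the dense set $S$. Finally, for the completion statement, I would observe that $\s_S$ is a dense subset of $\s$: every point of $\s$ can be approximated by points realizing distances in $S$ because $S$ is dense in $[0,2]$ and $\s_S$ is ultrahomogeneous and universal for $\E_S$; combined with the fact that $\s$ is itself the completion of any countable dense ultrahomogeneous subspace of the appropriate age, this yields that the completion of $\s_S$ is exactly $\s$.
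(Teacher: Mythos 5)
The paper does not actually prove this claim --- it simply cites \cite{NVT} and \cite{NVT'} --- so there is no in-paper argument to compare against; your proposal has to be judged on its own. The existence-and-uniqueness half of your plan is the standard route and is essentially sound. Heredity, closure under isometry, and countability of $\E_S$ up to isometry are indeed immediate, and your reduction of amalgamation to a Gram-matrix completion problem is the right mechanism: encoding a candidate space by $\langle x_i,x_j\rangle = 1-\tfrac{1}{2}d(x_i,x_j)^2$, affine independence of $\{0_{\ell_2}\}\cup\m{X}$ is precisely strict positive definiteness of this matrix (this is Schoenberg's criterion, Theorem \ref{thm:S}, which the paper itself invokes elsewhere); for two one-point extensions of $\m{X}_0$ the single free entry ranges over a nonempty open interval (place the components of $a$ and $b$ orthogonal to $\mathrm{span}(X_0)$ at a variable angle, or argue via the Schur complement), the map $t\mapsto\sqrt{2-2t}$ carries this to a nonempty open interval of admissible distances in $(0,2)$, and density of $S$ finishes; the triangle inequality is automatic from the Hilbert-space realization, full amalgamation follows from the one-point case by induction, and JEP from amalgamation over a single point. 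Fra\"{\i}ss\'e's theorem then gives existence and uniqueness of $\s_S$.

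The genuine gap is in the ``moreover'' half, the identification of the completion with $\s$, which you dispatch in three lines that are circular as written. The Fra\"{\i}ss\'e limit $\s_S$ is an abstract countable metric space; the assertion ``$\s_S$ is a dense subset of $\s$'' presupposes an isometric embedding $\s_S\hookrightarrow\s$ with dense image, and constructing such an embedding is exactly the content of the claim --- note that an arbitrary embedding (which does exist, point by point, since each one-point extension has strictly positive definite Gram matrix and $\ell_2$ has room) need not have dense image, and there is no a priori reason its closure is isometric to $\s$. Your appeal to ``the fact that $\s$ is the completion of any countable dense ultrahomogeneous subspace of the appropriate age'' assumes density, after which the conclusion is trivial, so it proves nothing. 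What is actually needed is an interleaving construction: enumerate a dense sequence $(u_n)$ in $\s$ together with the points of $\s_S$; at each stage, given a finite $F\subset\s_S$ already realized as $\tilde{F}\subset\s$, perturb the distance map of $u_n$ over $\tilde{F}$ to an $S$-valued map whose extended Gram matrix remains strictly positive definite (this openness of strict positive definiteness is exactly the paper's Claim \ref{claim:Schoenberg}), realize the perturbed map inside $\s_S$ by the extension property of the limit (Lemma \ref{prop:extension}), and check that its realization in $\s$ can be chosen within $\varepsilon$ of $u_n$ --- which requires a short geometric argument (matching the projection onto $\mathrm{span}(\tilde{F})$ and aligning the orthogonal component with that of $u_n$), not just the observation that the distance data are close. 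Until this back-and-forth is carried out, the completion statement remains unproven in your proposal.
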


\begin{proof}
See \cite{NVT} or \cite{NVT'}. 
\end{proof}

We show: 

\begin{thm}
\label{thm:ageind}
The space $\s _S$ is age-indivisible. 
\end{thm}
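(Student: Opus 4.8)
The plan is to reduce the statement to a single input from Euclidean Ramsey theory. Fix a finite metric subspace $\m{Y}$ of $\s _S$ and a partition $\s _S = C_1 \cup \cdots \cup C_r$ into finitely many classes (two classes suffice for the stated theorem, but nothing is gained by restricting to $r=2$). By definition of $\E _S$, the space $\m{Y}$ is isometric to a finite affinely independent set of points lying on the unit sphere $\s$ of $\ell _2$, that is, to a nondegenerate simplex inscribed in a unit sphere $S^{k-1}$ with $k = |Y|$ and with all edge lengths in $S$. I want to produce a copy of this simplex entirely contained in one of the classes $C_i$.

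The decisive tool is the theorem of Matou\v{s}ek and R\"odl asserting that every nondegenerate simplex is sphere-Ramsey. Applied to $\m{Y}$ and to $r$ colours, it furnishes an integer $N$ such that every colouring of the unit sphere $S^{N-1}$ in $\R^N$ with $r$ colours contains a monochromatic congruent copy of $\m{Y}$; since $\m{Y}$ is inscribed in the unit sphere, the monochromatic copy again lies on $S^{N-1}$, hence on the unit sphere of $\ell _2$ once $\R^N$ is viewed as a subspace of $\ell _2$. Thus, at the level of the continuous sphere, the result is exactly what Matou\v{s}ek and R\"odl provide, and no further Ramsey-theoretic work is needed.

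The real work is the transfer from the continuous sphere $S^{N-1} \subset \s$ to the countable dense subspace $\s _S$, and this is where I expect the main difficulty to lie. The given colouring colours only the points of $\s _S$, whereas Matou\v{s}ek and R\"odl require a colouring of every point of $S^{N-1}$; conversely, an exact monochromatic simplex produced on $S^{N-1}$ will in general have its vertices outside $\s _S$, so a naive pull-back followed by push-forward loses either the colour or the exactness of the distances. To reconcile the two, I would first exploit the fact that, because $\ell _2$ is infinite dimensional and $S$ is dense in $[0,2]$, one can place inside $\s _S$ a countable family $\{ C_\alpha : \alpha \in \N \}$ of \emph{exact} isometric copies of $\m{Y}$ whose positions form an arbitrarily fine net in the (compact) space of congruent copies of $\m{Y}$ sitting on $S^{N-1}$: each such copy has all its edge lengths in $S$ and, placed generically in fresh coordinates, remains affinely independent together with $0_{\ell _2}$, so every finite union of them still belongs to $\E _S$ and therefore embeds isometrically into $\s _S$. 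Using this family I would define an auxiliary $r$-colouring of $S^{N-1}$ that is \emph{locally constant} relative to the net, namely by colouring a point of the sphere according to the colour its nearest net vertex receives in $\s _S$. The theorem of Matou\v{s}ek and R\"odl then yields a monochromatic copy for this auxiliary colouring and, the net being fine, that copy is close enough to one of the prescribed exact copies $C_\alpha$ that the colour read off at the vertices of $C_\alpha$ agrees with it. The image of $C_\alpha$ in $\s _S$ is then an exact monochromatic copy of $\m{Y}$, which is what is required.

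The step I expect to be genuinely delicate is this last matching: ensuring that an abstract monochromatic copy produced on $S^{N-1}$ can be snapped onto a single prescribed exact copy $C_\alpha$ so that all $k$ vertices inherit the same colour simultaneously. This requires a careful quantitative choice of the net (its fineness in the space of copies versus the separation of the individual vertices) so that the nearest-vertex assignment is consistent across all vertices of the copy, and it is here that the density of $S$ and the fact that affine independence with $0_{\ell _2}$ is an open condition are used in an essential way. Once this combinatorial-geometric bookkeeping is in place, the age-indivisibility of $\s _S$ follows; I would expect the formal write-up to package the bookkeeping into a single lemma, so that the main argument reduces to one clean application of the theorem of Matou\v{s}ek and R\"odl.
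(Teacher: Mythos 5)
Your overall strategy (reduce everything to the Matou\v{s}ek--R\"odl sphere-Ramsey theorem) is the same as the paper's, but your transfer from the continuous sphere back to $\s_S$ has a genuine gap at exactly the step you yourself flag as delicate, and no ``careful quantitative choice of the net'' can close it. After applying Theorem~\ref{thm:MR} to your nearest-net-vertex colouring of $S^{N-1}$, what you get is a congruent copy $\{z_1,\dots,z_k\}$ of $\m{Y}$ such that, for each $i$, the net vertex nearest to $z_i$ carries the colour $c$. Nothing forces these nearest vertices to be the vertices of one single prescribed copy $C_\alpha$: the space of congruent copies of $\m{Y}$ on $S^{N-1}$ is a continuum acted on transitively by the orthogonal group, so any countable family $\{C_\alpha\}$ leaves a continuum of ``boundary'' positions at which the vertices of a copy snap to vertices belonging to \emph{different} $C_\alpha$'s, and Theorem~\ref{thm:MR} gives you no control whatsoever over where the monochromatic copy lands. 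All your argument yields is an approximately congruent monochromatic configuration in $\s_S$ (pairwise distances within $2\delta$ of those of $\m{Y}$), and once the colouring of $\s_S$ is fixed there is no mechanism for upgrading approximate monochromatic copies to exact ones --- the colour information you collected pertains to the wrong points. (A secondary, repairable issue: for your net to sit inside $\s_S$ at all, the cross-distances between distinct copies $C_\alpha$, $C_\beta$ must also lie in $S$; generic placement does not give this, though a recursive construction using the extension property of $\s_S$ would.)

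The paper's proof sidesteps this entirely by securing exactness \emph{before} any colouring enters, at the level of a finite Ramsey witness. It applies Theorem~\ref{thm:MR} not to $\m{Y}$ but to the shrunk space $(Y, d^{\m{Y}}-\varepsilon)$, obtaining a finite $\m{T}\subset\s$ of circumradius $r+\alpha$; the stretched space $\m{Z}_0=(T,d^{\m{T}}+\varepsilon)$ is Euclidean and affinely independent by Claim~\ref{claim:cir}, has circumradius $r+\alpha+\varepsilon<1$, and --- this is the point --- inside $\m{Z}_0$ every monochromatic copy of the shrunk space becomes an \emph{exact} copy of $\m{Y}$. Then the stability statement extracted from Schoenberg's criterion (Claim~\ref{claim:Schoenberg}), together with the density of $S$, lets one perturb the finitely many distances of $\m{Z}_0$ not in $S$ into $S$ without touching the copies of $\m{Y}$ (whose distances are already in $S$), and the circumradius bound plus Claim~\ref{claim:affind} guarantee affine independence with $0_{\ell_2}$, so the resulting $\m{Z}$ embeds in $\s_S$. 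Age-indivisibility follows by simply restricting the given partition of $\s_S$ to this finite subspace $\m{Z}$: no auxiliary colouring of the continuous sphere, and hence no snapping problem, ever arises. To salvage your route you would have to build the same $\varepsilon$-slack into the finite configuration before colouring, which effectively turns your argument into the paper's.
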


We also indicate why the space $\s _S$ may not be weakly indivisible. The proof of those results are provided in Subsection \ref{subsection:1} and Subsection \ref{subsection:2} respectively. 

\subsection{The space $\s _S$ is age-indivisible} \label{subsection:1}

Let $\m{Y}$ be a finite metric subspace of $\s_S$. We need to show: 

\begin{claimm}
\label{claim:ageind}
There is a finite metric subspace $\m{Z}$ of $\s_S$ such that for every partition $Z=B\cup R$, the space $\m{Y}$ embeds in $B$ or $R$.
\end{claimm}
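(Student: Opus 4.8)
The plan is to reduce the claim to a finite Ramsey statement about the class $\E_S$ and then transport it into $\s_S$ via the Fra\"{\i}ss\'e property. Since $\s_S$ is the unique countable ultrahomogeneous space with age $\E_S$, every member of $\E_S$ embeds isometrically into $\s_S$; so it suffices to produce a single finite metric space $\m{Z}\in\E_S$ with the property that every partition $Z=B\cup R$ yields a monochromatic isometric copy of $\m{Y}$, i.e.\ $\m{Z}\to(\m{Y})^1_2$. The required $\m{Z}$ will come from Euclidean Ramsey theory.

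First I would fix an isometric embedding of $\m{Y}$ into the unit sphere $\mathbb{S}^{d-1}$ of some $\ell_2^d$ with $\m{Y}$ affinely independent together with the origin $0_{\ell_2}$; this is possible precisely because $\m{Y}\in\E_S$. In this picture $\m{Y}$ is the vertex set of a nondegenerate simplex inscribed in the sphere, which is exactly the configuration handled by the Matou\v{s}ek--R\"odl theorem: every such inscribed simplex is sphere-Ramsey. Applying it with two colours supplies an ambient dimension $n$ such that every $2$-colouring of the unit sphere $\mathbb{S}^{n}$ contains a monochromatic congruent copy of $\m{Y}$. A routine compactness argument (the space of $2$-colourings of $\mathbb{S}^{n}$ is compact and ``contains no monochromatic copy of $\m{Y}$'' is a closed condition) then extracts a finite set $\m{W}\subseteq\mathbb{S}^{n}$ with the same arrow, $\m{W}\to(\m{Y})^1_2$.

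It remains to replace $\m{W}$ by a set that genuinely lies in $\E_S$, namely a finite, linearly independent family of unit vectors (equivalently, affinely independent together with the origin) with all pairwise distances in $S$. Two observations drive this. Every congruent copy of $\m{Y}$ in $\mathbb{S}^n$ is the image of the fixed embedded $\m{Y}$ under an orthogonal transformation of $\mathbb{R}^{n+1}$, hence is itself affinely independent with the origin and retains the $S$-valued distances of $\m{Y}$; so the monochromatic copies we seek are automatically members of $\E_S$, and the edges of $\m{W}$ lying inside some copy already carry $S$-values. For the remaining, auxiliary, distances I would pass to the Gram matrix of $\m{W}$ with respect to the origin and perturb it: holding fixed the entries that belong to edges of copies of $\m{Y}$ (keeping every such copy exactly isometric to $\m{Y}$) while nudging the auxiliary entries into the admissible dense set $\{1-s^2/2:s\in S\}$. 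Since positive definiteness of a Gram matrix is an open condition and $S$ is dense in $[0,2]$, and since raising the dimension $n$ gives room for full rank, this produces a linearly independent configuration $\m{Z}\in\E_S$ that carries at least the same copies of $\m{Y}$ as $\m{W}$; because adding copies can only help, $\m{Z}\to(\m{Y})^1_2$ still holds. Embedding $\m{Z}$ into $\s_S$ by ultrahomogeneity then finishes the claim.

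The main obstacle is exactly this last transport. The Matou\v{s}ek--R\"odl conclusion concerns congruent copies inside the \emph{full} sphere, whereas membership in $\E_S$ forces the ambient Ramsey set itself to be a single nondegenerate simplex with $S$-valued distances; and since an isometric copy of $\m{Y}$ requires \emph{exact} distances, the configuration cannot be perturbed freely without annihilating the very copies that make the arrow hold. The delicate point is therefore to secure affine independence with the origin (by enlarging the dimension) and $S$-valued distances (by density) \emph{simultaneously}, in a way that leaves each monochromatic copy an exact isometric copy of $\m{Y}$; the Gram-matrix perturbation above, in which copy-edges are frozen and only the genuinely free entries are rounded into $S$, is the mechanism I would use to overcome it.
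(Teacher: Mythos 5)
Your overall skeleton is the same as the paper's (Matou\v{s}ek--R\"odl plus a transfer into $\s_S$ by keeping copy-edges exact and rounding the remaining distances into $S$), but the final Gram-matrix perturbation step has a genuine gap, and it sits exactly at the point you yourself flag as delicate. The assertion that ``raising the dimension $n$ gives room for full rank'' is false for a fixed distance matrix: the affine rank of a point configuration (and the linear rank of the configuration together with $0_{\ell_2}$) is an isometry invariant, determined entirely by the pairwise distances, not by the ambient dimension. In your scheme the distances to the origin are all $1$ and every entry lying on an edge of a copy of $\m{Y}$ is frozen; the Matou\v{s}ek--R\"odl witness is in general a large, highly affinely dependent configuration saturated with overlapping copies, and nothing you say rules out a subset of it in which \emph{every} pair is covered by some copy-edge and which is affinely dependent together with the origin. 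In that situation there are no free entries to nudge, and no enlargement of the ambient space can restore independence. Even when the frozen graph is incomplete, perturbing only the free entries while holding the copy-edges to exact equality is a rigidity problem: openness of positive definiteness helps only if a nearby full-rank completion of the frozen pattern exists, which is precisely what is left unproved. (You also need circumradius $<1$ for the final configuration so that Claim \ref{claim:affind} applies; this is recoverable from the $r+\alpha$ bound in Theorem \ref{thm:MR} but is not tracked in your write-up.)

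The paper avoids this rigidity issue entirely with a uniform inflation trick, which is the mechanism your argument is missing. Using the stability statement of Claim \ref{claim:Schoenberg}, fix $\varepsilon>0$ with $r+2\varepsilon<1$ such that any symmetric $\varepsilon$-perturbation of $d^{\m{Y}}$ stays Euclidean and affinely independent; then apply Theorem \ref{thm:MR} not to $\m{Y}$ but to the \emph{shrunk} space $(Y,d^{\m{Y}}-\varepsilon)$, obtaining a witness $\m{T}$, and pass to $\m{Z}_0=(T,d^{\m{T}}+\varepsilon)$. Adding the constant $\varepsilon$ to \emph{all} distances simultaneously turns every monochromatic copy of the shrunk space into an exact isometric copy of $\m{Y}$ and, by Claim \ref{claim:cir} (push each point by $\sqrt{\varepsilon/2}$ in its own fresh orthogonal direction), forces the entire witness to be Euclidean and affinely independent with circumradius $<r+2\varepsilon<1$ --- no frozen-versus-free analysis is needed because every distance has been made ``slack'' by construction. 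After that, your rounding idea goes through exactly as the paper does it: the copy-edges already carry values of $d^{\m{Y}}\in S$, the remaining entries are perturbed into the dense set $S$ within the Schoenberg margin, and Claim \ref{claim:affind} yields affine independence with $0_{\ell_2}$, so $\m{Z}$ embeds in $\s_S$. Replacing your Gram-matrix perturbation by this $\pm\varepsilon$ device would repair the proof.
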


The main ingredient of the proof is the following deep result due to Matou\v{s}ek and R\"odl: 

\begin{thm}[Matou\v{s}ek-R\"odl \cite{MR}]
\label{thm:MR}
Let $\m{X}$ be an affinely independent finite metric subspace of $\s$ with circumradius $r$, and let $\alpha > 0$. Then there is a finite metric subspace $\m{Z}$ of $\s$ with circumradius $r+\alpha$ such that for every partition $Z=B\cup R$, the space $\m{X}$ embeds in $B$ or $R$.
\end{thm}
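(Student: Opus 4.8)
The plan is to prove the Euclidean--Ramsey phenomenon behind the statement: \emph{every non-degenerate finite simplex is sphere-Ramsey, and the radius of the host sphere can be kept within $\alpha$ of the circumradius of the simplex}. Since any finite metric space whose points lie on a Euclidean sphere of radius at most $1$ embeds isometrically into $\s$ (and note that a subset of $\s$ has circumradius $\leq 1$, so we assume $\alpha$ small enough that $r+\alpha \leq 1$, as the conclusion requires), it suffices to produce, for large $N$, a finite configuration $\m{Z}$ lying on the sphere $S_\rho^{N-1} \subset \R^N$ of radius $\rho = r+\alpha$ such that every $2$-colouring of $Z$ contains a monochromatic \emph{congruent} copy of $\m{X}$; one then arranges the centre $O$ of $S_\rho^{N-1}$ to lie in the affine hull of $Z$, so that $\m{Z}$ has circumcentre $O$ and circumradius exactly $\rho$, and embeds into $\s$. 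First I would normalise $\m{X}$ so that its circumcentre is the origin of its affine span $V \cong \R^d$ (with $d = |X|-1$ by affine independence), giving $\|x_i\| = r$ and target pairwise distances $\delta_{ij} = \|x_i - x_j\|$.

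The geometric heart of the construction is to exploit the strict inequality $\rho > r$. A congruent copy of $\m{X}$ placed on $S_\rho^{N-1}$ has its circumsphere sitting as a \emph{proper} cross-section of $S_\rho^{N-1}$, namely the intersection with an affine flat at distance $h = \sqrt{\rho^2 - r^2} > 0$ from the centre; this positive height $h$ is exactly the slack that provides room to place many copies in a structured way. Concretely I would work over the alphabet $A = \{0,1,\dots,d\}$, split $\R^N$ into $N$ orthogonal blocks, and assign to each word $w \in A^N$ a point built from the vectors $x_{w_j}$ together with a fixed height component in the orthogonal directions, normalised by $1/\sqrt{N}$ so that every such point lies on $S_\rho^{N-1}$. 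The design goal is that the combinatorial lines of the Hales--Jewett cube (resp.\ the diagonal lines of monochromatic combinatorial subspaces) map to configurations whose pairwise distances are exactly the $\delta_{ij}$, that is, to honest congruent copies of $\m{X}$.

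The Ramsey engine is then the Hales--Jewett theorem (in the Graham--Rothschild form for combinatorial subspaces): for $N$ large enough depending on $d$ and the number of colours, every $2$-colouring of $A^N$ admits a monochromatic combinatorial line, which under the encoding above yields the desired monochromatic congruent copy of $\m{X}$. Collecting the images of all words gives the finite host $\m{Z} \subset S_\rho^{N-1}$, and the colouring-freeness of the Hales--Jewett conclusion delivers the required partition property.

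The step I expect to be the main obstacle is \textbf{reconciling congruence with radius control}. The naive encoding, sending $w$ to $\frac{1}{\sqrt{N}}(x_{w_1},\dots,x_{w_N})$, does place every point on a sphere of radius $r$, but a monochromatic combinatorial line with $m$ wildcard coordinates then produces only a copy of $\m{X}$ \emph{scaled} by $\sqrt{m/N}$, hence similar but not congruent; and Hales--Jewett gives no control over $m$. Turning these similar copies into congruent ones of circumradius close to $r$ requires forcing the monochromatic line to use a prescribed large fraction of wildcard coordinates, equivalently producing copies at a single prescribed scale, so that pre-scaling $\m{X}$ by a known factor renders the output congruent while the host radius rises only from $r$ to $r+\alpha$. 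This is precisely where the positive slack $\alpha$ (the height $h>0$) is indispensable, and where the genuine difficulty of the Matou\v{s}ek--R\"odl argument lies: carrying it out needs a more delicate combinatorial--geometric Ramsey statement, or alternatively a concentration-of-measure argument on $S_\rho^{N-1}$ that places a random congruent copy and shows a monochromatic one must occur, rather than a direct appeal to Hales--Jewett.
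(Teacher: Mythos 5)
There is a genuine gap, and you have in fact located it yourself: your final paragraph concedes that the passage from \emph{similar} copies to \emph{congruent} copies at a prescribed scale is unresolved, and that concession is fatal, because that step is not a technical loose end but the entire mathematical content of the theorem. As you correctly observe, the natural Hales--Jewett encoding $w \mapsto \frac{1}{\sqrt{N}}(x_{w_1},\dots,x_{w_N})$ sends a monochromatic combinatorial line with wildcard set of size $m$ to a copy of $\m{X}$ dilated by $\sqrt{m/N}$, and the Hales--Jewett theorem (even in Graham--Rothschild form) gives no control whatsoever over $m$. No amount of pre-scaling fixes this, since the colouring is adversarial and may be chosen after the scale is fixed. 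What actually closes this gap in the literature is not a variant of Hales--Jewett but the Frankl--R\"odl machinery for Ramsey properties of simplices (partition theorems with quantitative control over the geometry of the monochromatic configuration, via their ``spread''-type arguments), which Matou\v{s}ek and R\"odl then refine substantially to keep the host configuration on a sphere whose radius exceeds the circumradius by only $\alpha$; the slack $h=\sqrt{\rho^2-r^2}>0$ that you identify is indeed where the room comes from, but exploiting it requires proving a controlled-scale Ramsey statement that your proposal only names as a desideratum. A proposal whose ``main obstacle'' paragraph is an accurate description of the unproved theorem is a plan, not a proof.

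For comparison: the paper itself offers no proof of this statement at all --- it imports it from Matou\v{s}ek--R\"odl \cite{MR} as a black box and explicitly flags it as a deep result of Euclidean Ramsey theory on which Theorem \ref{thm:ageind} rests. So your setup (normalising $\m{X}$ to circumcentre at the origin of its affine span, realising the host on $S^{N-1}_{\rho}$ with $\rho = r+\alpha \leq 1$ and embedding that sphere as a cross-section of $\s$, which is correct) reconstructs the right architecture for the known proof, but within the scope of this paper the honest move would be the paper's own: cite \cite{MR}, rather than attempt to reprove it.
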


What we need to prove is that in the case where $\m{X}=\m{Y}$, we may arrange $\m{Z}$ to be a subspace of $\s _S$ (that is, with distances in $S$ and $\{0_{\ell_2} \} \cup \m{Z}$ affinely independent). We will make use of the following facts along the way:

\begin{thm}[Schoenberg \cite{S}]

\label{thm:S}
Let $X=\{ x_k:1\leq k \leq n\}$ be a finite set and let $\delta: \funct{X^2}{\R}$ satisfying:  
\begin{enumerate}
	\item for every $x \in X$, $\delta(x,x)=0$, 
	\item for every $x, x'\in X$, $\delta(x,x)=0$ and $\delta(x',x)=\delta(x,x')$.  
\end{enumerate}

Then $(X, \delta)$ is isometric to a subset of $\ell_2$ iff 

\[ \max \left\{ \sum_{1\leq i<j \leq n} \delta(x_i,x_j)^2 \lambda_i \lambda_j : \sum_{k=1} ^n \lambda_k ^2 = 1 \ \ \textrm{and} \ \ \sum_{k=1} ^n \lambda_k = 0 \right\} \leq 0 \enspace .\] 

Moreover, $(X, \delta)$ is isometric to an affinely independent subset of $\ell_2$ iff the inequality is strict. 

\end{thm}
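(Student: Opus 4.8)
The plan is to recast the statement as a purely linear-algebraic fact about two quadratic forms attached to the matrix of squared distances, and then to move between the metric and the Euclidean pictures through a Gram matrix. Write $n = |X|$, let $D$ be the symmetric $n \times n$ matrix with entries $D_{ij} = \delta(x_i,x_j)^2$ (so $D_{ii}=0$), and observe that the quantity to be controlled is, up to the harmless factor $2$, the value of the quadratic form $u \mapsto u^{T} D u = 2\sum_{i<j}\delta(x_i,x_j)^2 u_i u_j$ on the hyperplane $H = \{u \in \R^n : \sum_k u_k = 0\}$ (I relabel the real variables of the statement as $u_1,\dots,u_n$ to keep them apart from the points $x_k$). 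The goal is then to show that ``$u^{T}Du \le 0$ on $H$'' is equivalent to embeddability in $\ell_2$, with the strict version matching affine independence.

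Necessity is a direct computation. If $x_k \mapsto v_k \in \ell_2$ is an isometric embedding, then $\delta(x_i,x_j)^2 = \|v_i\|^2 - 2\langle v_i, v_j\rangle + \|v_j\|^2$, and for $u \in H$ the two norm-squared terms vanish after summation because $\sum_k u_k = 0$, leaving
\[ u^{T} D u = -2 \sum_{i,j} \langle v_i, v_j\rangle u_i u_j = -2 \Bigl\| \sum_i u_i v_i \Bigr\|^2 \le 0. \]
This shows the form is $\le 0$ on $H$ and that it vanishes exactly when $\sum_i u_i v_i = 0$; I will reuse this last observation for the affine-independence refinement.

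For sufficiency I would reconstruct the points by the classical ``double centering'' trick. Fix the base point $x_1$ and define the symmetric matrix $G$ by $G_{ij} = \tfrac12(\delta(x_1,x_i)^2 + \delta(x_1,x_j)^2 - \delta(x_i,x_j)^2)$; its first row and column vanish, so $G$ is effectively an $(n-1)\times(n-1)$ matrix on the indices $2,\dots,n$. A one-line computation using $\sum_k u_k = 0$ gives $u^{T} G u = -\tfrac12 u^{T} D u$ for $u \in H$, so the hypothesis reads $u^{T} G u \ge 0$ on $H$. Because $G$ ignores the first coordinate, any vector can be adjusted in its first coordinate to land in $H$ without changing $u^{T}Gu$, so in fact $G$ is positive semidefinite on all of $\R^n$. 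Factoring the positive semidefinite matrix $G$ as a Gram matrix yields vectors $w_2,\dots,w_n \in \ell_2$ with $G_{ij} = \langle w_i, w_j\rangle$, and setting $v_1 = 0$, $v_i = w_i$ one checks directly that $\|v_i-v_j\|^2 = G_{ii} - 2G_{ij} + G_{jj} = \delta(x_i,x_j)^2$, so $x_k \mapsto v_k$ is the desired embedding. For the ``moreover'' clause, strictness of the inequality on $H$ is equivalent to $G$ being positive definite, i.e.\ to $w_2,\dots,w_n$ (equivalently $v_2-v_1,\dots,v_n-v_1$) being linearly independent, which is exactly affine independence of $\{v_1,\dots,v_n\}$; conversely any affinely independent realization produces, through the necessity computation above, the implication that no nonzero $u \in H$ can annihilate the form.

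The only genuinely delicate point is the passage from ``$G$ is positive semidefinite on the hyperplane $H$'' to ``$G$ is positive semidefinite on all of $\R^n$'', after which the Gram factorization does the rest; everything else is bookkeeping. This step is clean here precisely because centering at $x_1$ forces the first row and column of $G$ to vanish, so the constraint $\sum_k u_k = 0$ can always be satisfied by modifying the irrelevant first coordinate. Realizing the factorization over $\ell_2$ rather than over $\R^{n-1}$ needs only the standard fact that a positive semidefinite matrix is a Gram matrix, and tracking ranks throughout delivers the affine-independence equivalence for free.
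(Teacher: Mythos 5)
Your proof is correct, but note that the paper itself contains no proof of this statement: Theorem~\ref{thm:S} is quoted as a black box from Schoenberg \cite{S}, so there is no internal argument to compare against, and what you have supplied is a complete, self-contained proof of the cited result. Your route is the standard one (essentially Schoenberg's original linear-algebra argument): necessity by expanding $\|v_i - v_j\|^2$ and killing the norm-squared terms with $\sum_k u_k = 0$; sufficiency by double centering at the base point $x_1$, the identity $u^{T}Gu = -\tfrac{1}{2}u^{T}Du$ on the hyperplane $H$, the observation that the vanishing first row and column of $G$ let you satisfy the constraint $\sum_k u_k = 0$ by adjusting the irrelevant coordinate $u_1$ (this is indeed the one delicate step, and you handle it correctly), and then a Gram factorization of the positive semidefinite block. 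The verifications all check out: $G_{ii} = \delta(x_1,x_i)^2$, hence $G_{ii} + G_{jj} - 2G_{ij} = \delta(x_i,x_j)^2$; and in the ``moreover'' clause, strictness on $H$ is equivalent to positive definiteness of the $(n-1)\times(n-1)$ block via the lifting of a nonzero $u' \in \R^{n-1}$ to a nonzero $u \in H$, while the converse correctly comes down to the fact that for $u \in H$ one has $\sum_i u_i v_i = \sum_i u_i (v_i - v_1)$, so a nonzero annihilating vector is precisely an affine dependence among the $v_i$.

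Two small caveats, both traceable to the paper's statement rather than to your argument: the hypotheses as printed are garbled (condition (i) is duplicated inside condition (ii), and $|G|$ should read $n = |X|$, with the real variables in the maximum unfortunately sharing the name $x_k$ with the points — your relabelling to $u_k$ is the right repair); and you should state explicitly that $\delta \geqslant 0$ is assumed, since the quadratic-form criterion only sees $\delta^2$ while an isometric realization forces nonnegative values — with $\delta \geqslant 0$ understood, as the statement evidently intends, your proof is complete.
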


\begin{claimm}
\label{claim:Schoenberg}
Let $\m{X}$ be a finite affinely independent metric subspace of $\s$ with circumradius $r$. Then there is $\varepsilon >0$ such that for every $\delta: \funct{X^2}{\R}$ satisfying  
\begin{enumerate}
	\item for every $x, x'\in X$, $\delta(x,x)=0$ and $\delta(x',x)=\delta(x,x')$,  
	\item $|\delta^2 - (d^{\m{X}})^2| < \varepsilon^2$, 
\end{enumerate}
the space $(X, \delta)$ is an affinely independent metric subspace of $\s$. 
\end{claimm}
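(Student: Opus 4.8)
The plan is to deduce the claim from Schoenberg's criterion (Theorem \ref{thm:S}) together with a continuity (openness) argument, after first encoding the two conditions that define membership in $\E_S$ --- namely that $X$ embeds in the unit sphere $\s$ and that $\{0_{\ell_2}\}\cup X$ is affinely independent --- as a single ``flat'' affine-independence condition to which Theorem \ref{thm:S} applies verbatim. To this end I would adjoin a formal extra point $\ast$, destined to play the role of $0_{\ell_2}$, and for any symmetric $\eta$ with zero diagonal on $X$ define an extension $\eta^{+}$ on $X\cup\{\ast\}$ by setting $\eta^{+}(\ast,x)=1$ for every $x\in X$ and letting $\eta^{+}$ agree with $\eta$ on $X$. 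The key observation, which I would record first, is the equivalence: $(X,\eta)$ is an affinely independent metric subspace of $\s$ if and only if $(X\cup\{\ast\},\eta^{+})$ is isometric to an affinely independent subset of $\ell_2$. Indeed, putting the points of $X$ on $\s$ amounts to placing all of them at distance $1$ from one common point, and affine independence of $\{0_{\ell_2}\}\cup X$ is precisely the affine independence of the augmented configuration, once the image of $\ast$ is translated to the origin.

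With this reformulation in hand, enumerate $X\cup\{\ast\}$ as $x_1,\dots,x_{n+1}$ (so $n=|X|$) and consider, for symmetric $\eta$, the functional $M(\eta)=\max\{\sum_{1\le i<j\le n+1}\eta(x_i,x_j)^2 v_i v_j : \sum_{k=1}^{n+1}v_k^2=1,\ \sum_{k=1}^{n+1}v_k=0\}$. Since $\m{X}$ is an affinely independent subspace of $\s$, the augmented space $(X\cup\{\ast\},(d^{\m{X}})^{+})$ is an affinely independent subset of $\ell_2$, so the strict part of Theorem \ref{thm:S} yields $M((d^{\m{X}})^{+})<0$. The functional $\eta\mapsto M(\eta)$ is continuous, because the maximand is a polynomial in $\eta$ and in $v$ and the maximum is taken over the fixed compact set $\{v:\sum_k v_k^2=1,\ \sum_k v_k=0\}$; thus $|M(\eta)-M(\eta')|$ is bounded by the uniform distance of the two maximands and tends to $0$ as $\eta'\to\eta$. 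Because the passage $\delta\mapsto\delta^{+}$ changes only the $X$-internal entries (the distances to $\ast$ remain equal to $1$), I can pick $\varepsilon>0$ small enough that $|\delta-d^{\m{X}}|<\varepsilon$ forces $M(\delta^{+})<0$; shrinking $\varepsilon$ below $\min_{x\ne x'}d^{\m{X}}(x,x')$ as well ensures $\delta(x,x')>0$ for distinct $x,x'$. The strict part of Theorem \ref{thm:S} then makes $(X\cup\{\ast\},\delta^{+})$ an affinely independent subset of $\ell_2$, so $(X,\delta)$ is automatically a metric space and, by the equivalence above, an affinely independent subspace of $\s$.

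The step I expect to be the main obstacle is the equivalence established in the first paragraph, rather than any of the subsequent estimates. Theorem \ref{thm:S} as stated only detects isometric embeddability into flat $\ell_2$, whereas membership in $\E_S$ demands simultaneously a spherical realization and affine independence relative to the specific point $0_{\ell_2}$; the augmentation-by-$\ast$ device is exactly what reconciles the two, and some care is needed to verify that the translation sending the image of $\ast$ to $0_{\ell_2}$ turns the unit-distance constraints $\delta^{+}(\ast,x)=1$ into ``$X\subset\s$'' and turns the flat affine independence of $\{\ast\}\cup X$ into affine independence of $\{0_{\ell_2}\}\cup X$. Note that the circumradius $r$ from the hypothesis plays no role in this particular argument; it is recorded only because the claim is applied in tandem with the Matou\v{s}ek--R\"odl theorem, where circumradii must be tracked. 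Once the bookkeeping of the equivalence is in place, what remains is just the openness of the strict Schoenberg inequality under the continuous functional $M$.
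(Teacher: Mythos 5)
Your core mechanism is the same as the paper's: the paper's entire proof is the one-line observation that affine independence in $\ell_2$ is characterized by the strict Schoenberg inequality (Theorem \ref{thm:S}) and that the functional $M \mapsto Q_M$ is continuous, so strictness survives small symmetric perturbations of the distance matrix. What you add is the augmentation by the formal point $\ast$ at distance $1$ from every point of $X$, which encodes the constraint ``$X \subset \s$'' inside the flat Euclidean criterion. The paper leaves this spherical part implicit: its one-liner literally only yields that $(X,\delta)$ is Euclidean and affinely independent, and indeed that is all that gets invoked later (in the proof of Claim \ref{claim:ageind} the position on $\s$ is recovered separately through circumradius bookkeeping). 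Your version is more careful on exactly this point, and it buys something extra: it shows that the perturbed space not only embeds in $\s$ but does so with $\{0_{\ell_2}\}\cup (X,\delta)$ affinely independent, i.e., with circumradius $<1$, which is the property the application actually needs to track.

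However, one step is wrong as written, and it is precisely the one you flagged as the main obstacle, though not for the reason you expected. In the forward direction of your equivalence you assert that, since $\m{X}$ is an affinely independent subspace of $\s$, the augmented space $(X\cup\{\ast\},(d^{\m{X}})^{+})$ is realized by an affinely independent set (namely $\{0_{\ell_2}\}\cup X$); but by Claim \ref{claim:affind} this holds if and only if $r<1$. Your closing remark that the circumradius ``plays no role'' is therefore exactly backwards: $r<1$ is the hypothesis your argument needs. If $r=1$, the affine span of $X$ contains $0_{\ell_2}$, the augmented configuration is affinely dependent in every realization (the affine rank is determined by the distance matrix), hence $M((d^{\m{X}})^{+})=0$ rather than $<0$, and your openness argument cannot start. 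In fairness, the claim itself fails at $r=1$: two antipodal points of $\s$ form an affinely independent subspace with circumradius $1$, and perturbing their distance to $2+t$ yields a Euclidean, affinely independent two-point space that cannot embed in $\s$, whose diameter is $2$. So the statement implicitly presupposes $r<1$, which holds in every application in the paper since $\m{Y}\subset\s_S$ forces it via Claim \ref{claim:affind}. With the hypothesis $r<1$ made explicit, your proof is correct, and is in fact a sharper argument than the one the paper records.
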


\begin{proof}
Direct from Theorem \ref{thm:S} and from the fact that the map $M\mapsto Q_M$ is continuous, where for a matrix $M=(m_{ij})_{1\leq i, j \leq n}$, \[ Q_M = \max \left\{ \sum_{1\leq i<j \leq n} m_{ij}\lambda_i \lambda_j : \sum_{k=1} ^n \lambda_k ^2 = 1 \ \ \textrm{and} \ \ \sum_{k=1} ^n \lambda_k = 0 \right\}. \qedhere \] 

\end{proof}

\begin{claimm}
\label{claim:cir}
Let $\m{X}$ be a finite metric subspace of $\s$ with circumradius $r$. Let $\varepsilon > 0$. Then $(X, \sqrt{(d^{\m{X}})^2+\varepsilon^2})$ is Euclidean, affinely independent with circumradius at most $r+\varepsilon$. 
\end{claimm}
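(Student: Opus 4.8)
The plan is to prove Claim~\ref{claim:cir} by analyzing the effect of the uniform shift $d^{\m{X}} \mapsto d^{\m{X}} + \varepsilon$ on both the Euclidean-embeddability criterion and the circumradius. There are two assertions to establish: first, that $(X, d^{\m{X}}+\varepsilon)$ is Euclidean and affinely independent, and second, that its circumradius is at most $r+\varepsilon$. I would handle these separately.

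For the first assertion, I would invoke Schoenberg's criterion (Theorem~\ref{thm:S}). Write $\delta = d^{\m{X}}+\varepsilon$ and $d = d^{\m{X}}$, and consider a tuple $(x_1,\ldots,x_n)$ with $\sum_k x_k^2 = 1$ and $\sum_k x_k = 0$. The key computation is to expand $\delta(x_i,x_j)^2 = (d(x_i,x_j)+\varepsilon)^2 = d(x_i,x_j)^2 + 2\varepsilon\, d(x_i,x_j) + \varepsilon^2$ and sum against $x_i x_j$. The constant term $\varepsilon^2 \sum_{i<j} x_i x_j$ vanishes because $2\sum_{i<j} x_i x_j = (\sum_k x_k)^2 - \sum_k x_k^2 = -1$, so that term contributes $-\varepsilon^2/2 \leq 0$. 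Since $\m{X}$ is already affinely independent in $\s$, the $d$-part gives a strictly negative quantity by the strict inequality in Theorem~\ref{thm:S}. The remaining cross term $2\varepsilon \sum_{i<j} d(x_i,x_j)\, x_i x_j$ must be controlled; the cleanest route is to observe that $(X,d)$ Euclidean forces $\sum_{i<j} d(x_i,x_j)\, x_i x_j \le 0$ as well, or simply to note that the sum of a strictly negative term and two nonpositive terms is strictly negative, giving $Q < 0$ and hence affine independence. This establishes that $(X,\delta)$ embeds as an affinely independent subset of $\ell_2$.

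For the circumradius bound, I would argue directly in $\ell_2$. Let $c$ be a circumcenter of the embedded copy of $\m{X}$, so every point of $\m{X}$ lies at distance at most $r$ from $c$. The intuition is that inflating all pairwise distances by a fixed $\varepsilon$ is realized, up to isometry, by a configuration that can be circumscribed by a ball of radius at most $r+\varepsilon$. One concrete implementation is to exhibit an explicit embedding of $(X,\delta)$ into a Hilbert space: take the embedding of $(X,d)$ and adjoin, for each point, an orthogonal coordinate that accounts for the added $\varepsilon$; then estimate the distance from the new configuration to a suitably chosen center. Alternatively, one argues abstractly that the circumradius is a continuous, monotone-under-distance-scaling functional and bounds the increment by $\varepsilon$.

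The main obstacle I anticipate is the circumradius estimate rather than the Euclidean/affine-independence part, since the latter follows almost mechanically from Schoenberg's quadratic-form criterion once the algebraic identity $2\sum_{i<j} x_i x_j = -1$ is used. The circumradius claim is more delicate because one must produce or control an actual center for the inflated metric, and the relationship between a uniform additive perturbation of the distances and the geometry of the circumscribing ball is not purely formal. I expect the right move is to find an explicit Hilbert-space realization of $(X, d^{\m{X}}+\varepsilon)$ built from the realization of $(X,d^{\m{X}})$, for which a center and its radius can be computed or bounded by $r+\varepsilon$ directly.
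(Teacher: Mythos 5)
There is a genuine gap, and it sits exactly where you predicted: the circumradius half is never proved, and your proposed ``concrete implementation'' cannot work as described for the metric $d^{\m{X}}+\varepsilon$ read literally. Adjoining to each point an orthogonal coordinate of size $h$ (in pairwise orthogonal directions) changes squared distances by the \emph{constant} $2h^2$, i.e.\ it realizes the metric $\sqrt{(d^{\m{X}})^2+2h^2}$; but $(d^{\m{X}}(x,y)+\varepsilon)^2-(d^{\m{X}}(x,y))^2=2\varepsilon\, d^{\m{X}}(x,y)+\varepsilon^2$ \emph{varies with the pair}, so no choice of orthogonal bumps gives an isometric copy of $(X,d^{\m{X}}+\varepsilon)$. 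Your fallback (``circumradius is continuous and monotone, bound the increment by $\varepsilon$'') is not an argument. A correct completion does exist along your lines: by duality for the minimum enclosing ball, $r(X)^2=\max\bigl\{\sum_{i<j}\lambda_i\lambda_j\, d_{ij}^2 : \lambda_i\geq 0,\ \sum_i\lambda_i=1\bigr\}$; expanding $(d_{ij}+\varepsilon)^2$ and using $d_{ij}\leq 2r$ together with $\sum_{i<j}\lambda_i\lambda_j\leq \tfrac12$ yields $r'^2\leq r^2+2\varepsilon r+\tfrac{\varepsilon^2}{2}<(r+\varepsilon)^2$. As written, though, the second assertion of the claim is unproven.

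In the Schoenberg half there are two further problems, both repairable. First, you write ``Since $\m{X}$ is already affinely independent in $\s$'' --- the claim does \emph{not} assume this, and in the intended application ($\m{X}$ is the Matou\v{s}ek--R\"odl space $\m{T}$, for which no affine independence is guaranteed) it may fail; producing affine independence is precisely the point of the claim. Fortunately your own computation saves the argument: the constant term contributes $-\varepsilon^2/2<0$ strictly, so you only need the $d^2$-part to be $\leq 0$ (nonstrict Schoenberg, valid since $\m{X}$ is Euclidean) and the cross term to be $\leq 0$. Second, that cross-term bound --- $\sum_{i<j}d^{\m{X}}(x_i,x_j)\,x_ix_j\leq 0$ for mean-zero tuples, i.e.\ the \emph{first} power of a Euclidean metric is conditionally negative definite (equivalently, $(X,\sqrt{d^{\m{X}}})$ embeds in $\ell_2$) --- is true but is itself a nontrivial classical theorem of Schoenberg on metric transforms; you assert it in passing, and it needs a citation or proof, not a ``simply note.''

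For comparison, the paper's proof is a three-line construction: let $V$ be the affine span of $X$, choose pairwise orthogonal vectors $e_x\in V^{\perp}$, and set $\tilde{x}=x+\sqrt{\varepsilon/2}\,e_x$; affine independence is then immediate and the configuration lies in the ball of radius $r+\varepsilon$ about the old circumcenter. But note $\|\tilde{x}-\tilde{y}\|^2=d^{\m{X}}(x,y)^2+\varepsilon$, so the paper is implicitly reading $d^{\m{X}}+\varepsilon$ as a shift of \emph{squared} distances; under the literal pointwise reading $d^{\m{X}}(x,y)+\varepsilon$, the paper's construction has the very defect you identified, and it is your Schoenberg route --- corrected as above and supplemented by the dual circumradius bound --- that actually proves the literal statement. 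So your structural instincts were sound, but the proposal as it stands proves neither half completely.
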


\begin{proof}
Let $V$ be the affine space spanned by $X$. Choose $(e_x)_{x\in X}$ a family of pairwise orthogonal unit vectors in $V^{\perp}$. For $x\in X$, set $\tilde{x}=x+\varepsilon/\sqrt{2} \ e_x$. Then the set $\{\tilde{x} : x \in X\}$ is affinely independent and is isometric to $(X, \sqrt{(d^{\m{X}})^2+\varepsilon^2})$. Its circumradius is at most $r+\varepsilon$ because it is contained in the ball centered at the circumcenter of $X$ and with radius $r+\varepsilon$.
\end{proof}

\begin{claimm}
\label{claim:affind}
Let $\m{X}$ be an affinely independent subspace of $\s$. Then $\m{X}\cup\{ 0_{\ell_2}\}$ is affinely independent iff the circumradius of $\m{X}$ is $<1$.
\end{claimm}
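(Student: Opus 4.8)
The plan is to restate the claim in purely affine-geometric terms and then read off the equivalence from a single application of the Pythagorean theorem. Since $\m{X}$ is already assumed affinely independent, adjoining the point $0_{\ell_2}$ preserves affine independence if and only if $0_{\ell_2}$ does not lie in the affine hull $V := \mathrm{aff}(\m{X})$. Thus the whole statement reduces to proving that
\[ 0_{\ell_2} \notin V \iff r < 1, \]
where $r$ denotes the circumradius of $\m{X}$.

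To do so I would first recall the geometry of the circumcenter. Because $\m{X} = \{x_1, \dots, x_n\}$ is affinely independent, there is a unique point $c \in V$ equidistant from all the $x_i$, and $r = \|c - x_i\|$ is exactly this common distance. Let $V_0 = \mathrm{span}(x_j - x_1 : 2 \le j \le n)$ be the direction space of $V$. The set of all points of $\ell_2$ equidistant from every $x_i$ is the affine subspace $L = c + V_0^{\perp}$, and since $V_0 \cap V_0^{\perp} = \{0_{\ell_2}\}$ one has $L \cap V = \{c\}$. The key observation is that every $x_i$ lies on the unit sphere, so $\|x_i - 0_{\ell_2}\| = 1$ for all $i$; hence $0_{\ell_2}$ is itself equidistant from the $x_i$ and therefore $0_{\ell_2} \in L$.

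Now I would write $0_{\ell_2} = c + (0_{\ell_2} - c)$ with $0_{\ell_2} - c \in V_0^{\perp}$ and $c - x_i \in V_0$, so these two vectors are orthogonal. Pythagoras then gives, for each $i$,
\[ 1 = \|x_i - 0_{\ell_2}\|^2 = \|c - x_i\|^2 + \|0_{\ell_2} - c\|^2 = r^2 + \|0_{\ell_2} - c\|^2. \]
In particular $r \le 1$ always, and $\|0_{\ell_2} - c\|^2 = 1 - r^2$. Since $0_{\ell_2} \in L$ and $L \cap V = \{c\}$, we have $0_{\ell_2} \in V$ if and only if $0_{\ell_2} = c$, i.e. if and only if $\|0_{\ell_2} - c\| = 0$, i.e. if and only if $r = 1$. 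Combined with $r \le 1$, this yields precisely $0_{\ell_2} \notin V \iff r < 1$, which is the desired equivalence.

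The argument is essentially forced, so there is no serious obstacle; the only points needing care are the two structural facts about the circumcenter — that the point $c$ equidistant from the $x_i$ lies in $V$, and that the equidistant locus $L$ meets $V$ in exactly the single point $c$ — both of which rest on the affine independence of $\m{X}$, together with the verification that the decomposition $0_{\ell_2} = c + (0_{\ell_2} - c)$ is genuinely orthogonal, which is exactly the statement that $0_{\ell_2}$ is equidistant from all the $x_i$. Once these are in place, the equivalence drops straight out of the single Pythagorean identity $1 = r^2 + \|0_{\ell_2} - c\|^2$, which also records the useful auxiliary fact that the circumradius of any subspace of $\s$ never exceeds $1$.
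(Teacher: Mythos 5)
Your proof is correct and takes essentially the same route as the paper: the paper observes in one line that $\s\cap V$ is the circumscribed sphere of $X$ in its affine hull $V$ and has radius $<1$ exactly when $0_{\ell_2}\notin V$, and your argument via the equidistant locus and the Pythagorean identity $1=r^2+\|c\|^2$ (with $c$ the projection of $0_{\ell_2}$ onto $V$) is just the explicit verification of that assertion. No gap; you merely spell out what the paper leaves implicit, including the correct reading of ``circumradius'' as the radius of the circumscribed sphere of the affinely independent set.
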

  
\begin{proof}

Let $V$ be the affine space spanned by $X$. Then the set $\s\cap V$ is the circumscribed sphere of $X$ in $V$. It has radius $<1$ iff $0_{\ell_2}$ does not belong to $V$. \qedhere \end{proof}

\begin{proof}[Proof of Claim \ref{claim:ageind}]

First, we show that there is an affinely independent finite metric subspace $\m{Z}_0$ of $\s$ with circumradius $<1$ such that for every partition $Z_0=B\cup R$, $\m{Y}$ embeds in $B$ or $R$: 

Let $r$ denote the circumradius of $\m{Y}$. Because $\m{Y}$ is a subspace of $\s _S$, the space $\m{Y}\cup\{ 0_{\ell_2}\}$ is affinely independent and by Claim \ref{claim:affind}, we have $r<1$. By Claim \ref{claim:Schoenberg}, fix $\varepsilon > 0$ such that $r+2\varepsilon < 1$ and such that for every map $\delta: \funct{Y^2}{\R}$ satisfying 
\begin{enumerate}
	\item for every $y, y'\in Y$, $\delta(y,y)=0$ and $\delta(y',y)=\delta(y,y')$,
	\item $|\delta^2 - (d^{\m{Y}})^2| < \varepsilon^2$,
\end{enumerate}
the space $(Y, \delta)$ is still Euclidean and affinely independent. Fix $\alpha>0$ such that $\alpha~<~\varepsilon$. By choice of $\alpha$, the space $(Y, \sqrt{(d^{\m{Y}})^2-\alpha^2})$ is still Euclidean and affinely independent. It should be clear that its circumradius is at most $r$. Apply Theorem \ref{thm:MR} to produce a finite metric subspace $\m{T}$ of $\s$ with circumradius $r+\alpha$ such that for every partition $T=B\cup R$, the space $(Y, \sqrt{(d^{\m{Y}})^2-\alpha^2})$ embeds in $B$ or $R$. Set $\m{Z}_0=(T, \sqrt{(d^{\m{T}})^2+\alpha^2})$. We claim that $\m{Z}_0$ is as required.

Indeed, by Claim \ref{claim:cir}, $\m{Z}_0$ is Euclidean, affinely independent, and its circumradius is at most $ (r + \alpha) + \alpha < r + 2 \varepsilon < 1$. Next, if $Z_0=B\cup R$, this partition induces a partition $T=B\cup R$. By construction of $\m{T}$, there is a subspace $\mc{Y}$ of $\m{T}$ isometric to $(Y, \sqrt{(d^{\m{Y}})^2-\alpha^2})$ contained in $B$ or $R$. Note that in $\m{Z}_0$, the metric subspace supported by $\tilde{Y}$ is isometric to \[ \left(Y, \sqrt{\left(\sqrt{(d^{\m{Y}})^2-\alpha^2}\right)^2+\alpha^2 }\right) = \left(Y, \sqrt{(d^{\m{Y}})^2-\alpha^2+\alpha^2 }\right) = (Y, d^{\m{Y}}) = \m{Y}.\] 


Consider the space $\m{Z}_0$ we just constructed. Using Claim \ref{claim:Schoenberg} as well as the denseness of $S$, we may modify slightly all the distances in $\m{Z}_0$ that are not in $S$ and turn $\m{Z}_0$ into an affinely independent subspace $\m{Z}$ of $\s$ with distances in $S$ and circumradius $<1$. By Claim \ref{claim:affind}, the space $\{0_{\ell_2} \} \cup \m{Z}$ is affinely independent. Therefore, $\m{Z}$ embeds in $\s_S$. Finally, note that since all the distances of $\m{Z}_0$ that were already in $S$ did not get changed, the copies of $\m{Y}$ in $\m{Z}_0$ remain unaltered when passing to $\m{Z}$. It follows that for every partition $Z=B\cup R$, the space $\m{Y}$ embeds in $B$ or $R$. \qedhere \end{proof}

\subsection{The space $\s _S$ may not be weakly indivisible} \label{subsection:2}

The starting point of this section is the following theorem: 

\begin{thm}[Odell-Schlumprecht \cite{OS}]
\label{thm:OS'}
There is a partition $\s=B\cup R$ and $\varepsilon>0$ such that 
\begin{enumerate}
	\item For every linear subspace $V$ of $\ell_2$ with $\dim V = \infty$, $\s \cap V \not\subset (B)_{\varepsilon}$. 
	\item For every linear subspace $V$ of $\ell_2$ with $\dim V = \infty$, $\s \cap V \not\subset (R)_{\varepsilon}$. 
\end{enumerate}
\end{thm}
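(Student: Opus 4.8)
The statement is a metric reformulation of the \emph{arbitrary distortability} of $\ell_2$, which is precisely the content of the theorem of Odell and Schlumprecht; so the plan is to deduce it from that theorem. The key point is to convert a statement about the oscillation of an equivalent norm into a statement about the $\varepsilon$-neighborhoods of a two-colouring of $\s$.

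First I would invoke the distortion theorem in its strong, two-sided form: there is an equivalent norm $\|\cdot\|_{\ast}$ on $\ell_2$, say with $\|x\| \le \|x\|_{\ast} \le K\|x\|$ for all $x$, together with reals $0 < s < t$, such that for \emph{every} infinite-dimensional linear subspace $V$ of $\ell_2$ the sphere $\s \cap V$ contains a vector $x$ with $\|x\|_{\ast} \le s$ and a vector $y$ with $\|y\|_{\ast} \ge t$. Granting this, the construction of the partition is immediate: I fix $c$ with $s < c < t$ and set
\[ B = \{ x \in \s : \|x\|_{\ast} < c \}, \qquad R = \{ x \in \s : \|x\|_{\ast} \ge c \}. \]

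Next I would exploit that $\phi := \|\cdot\|_{\ast}$, restricted to $\s$, is $K$-Lipschitz for the metric induced by $\|\cdot\|$, since $|\phi(x) - \phi(x')| \le \|x - x'\|_{\ast} \le K\|x - x'\|$. I choose $\varepsilon > 0$ so small that $K\varepsilon < \min(c - s,\, t - c)$. Now fix an infinite-dimensional $V$ and pick $x, y \in \s \cap V$ as above. Any point $x'$ with $\|x - x'\| \le \varepsilon$ satisfies $\phi(x') \le s + K\varepsilon < c$, hence lies in $B$; thus no point of $R$ is within $\varepsilon$ of $x$, so $x \notin (R)_{\varepsilon}$ and $\s \cap V \not\subset (R)_{\varepsilon}$, giving (2). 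Symmetrically, any $y'$ with $\|y - y'\| \le \varepsilon$ has $\phi(y') \ge t - K\varepsilon > c$, so $y' \in R$; hence $y \notin (B)_{\varepsilon}$ and $\s \cap V \not\subset (B)_{\varepsilon}$, giving (1).

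The only genuine difficulty lies in the first step, namely securing the two-sided form with the \emph{fixed} levels $s < t$ valid uniformly over all infinite-dimensional subspaces. The plain distortion theorem controls only the \emph{ratio} $\sup_{u,w \in \s \cap V} \|u\|_{\ast}/\|w\|_{\ast}$ on each $V$, which a priori leaves the actual values free to drift with $V$ and so need not straddle any global threshold $c$. Upgrading this to absolute levels is exactly where the depth of the Odell--Schlumprecht construction (together with a stabilization argument for $\phi$ on the asymptotic structure of $\ell_2$) is needed; once it is in hand, the remaining passage to $\varepsilon$-thick monochromatic neighborhoods is the routine Lipschitz argument above.
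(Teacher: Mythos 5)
The first thing to say is that the paper contains no proof of Theorem~\ref{thm:OS'} for you to be compared against: the statement is imported verbatim from Odell--Schlumprecht \cite{OS}, with only the parenthetical remark (made when the paper discusses the strengthening in Theorem~\ref{thm:OS}) that in \cite{OS} the result is first proved in Schlumprecht's space and then transferred to $\ell_2$. So your proposal is not an alternative to an argument in the paper but a reconstruction of the folklore passage from the distortion theorem to the partition statement, and that passage is correct as you give it: with an equivalent norm $\|\cdot\|\le\|\cdot\|_{\ast}\le K\|\cdot\|$ and fixed levels $s<t$ witnessed on $\s\cap V$ for every infinite-dimensional $V$, the threshold sets $B=\{x\in\s:\|x\|_{\ast}<c\}$, $R=\s\setminus B$ with $K\varepsilon<\min(c-s,t-c)$ do yield (i) and (ii) by the Lipschitz estimate, exactly as you argue. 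Your one inaccuracy is in locating the difficulty: the upgrade from the ratio form of distortion to fixed levels is not where the depth of the Odell--Schlumprecht construction lies; granted arbitrary distortability of $\ell_2$ (the genuinely deep input), it is a short standard stabilization. Concretely, for infinite-dimensional $V\subset\ell_2$ put $\beta(V)=\inf\{\sup_{\s\cap W}\|\cdot\|_{\ast}:W\subset V\ \text{infinite-dimensional}\}$; this is bounded and does not decrease as $V$ shrinks, so one may choose $V_0$ with $\beta(V_0)\ge\sup_V\beta(V)-\eta$. Then every infinite-dimensional $V\subset V_0$ contains $y\in\s\cap V$ with $\|y\|_{\ast}\ge\beta(V_0)-\eta$ (since $\sup_{\s\cap V}\|\cdot\|_{\ast}\ge\beta(V)\ge\beta(V_0)$), and also some $W\subset V$ with $\sup_{\s\cap W}\|\cdot\|_{\ast}\le\beta(V_0)+2\eta$, which by $\lambda$-distortion inside $W$ yields $x\in\s\cap V$ with $\|x\|_{\ast}\le(\beta(V_0)+2\eta)/\lambda+\eta$; since $\beta(V_0)>0$ and $\lambda>1$, taking $\eta$ small gives fixed $s<t$ valid for all subspaces of $V_0$, and since every infinite-dimensional closed subspace of $\ell_2$ is isometric to $\ell_2$, pulling $\|\cdot\|_{\ast}$ back along an isometry onto $V_0$ makes the levels valid for all infinite-dimensional subspaces, as your scheme requires. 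With that paragraph supplied, your proof is complete modulo exactly the same external input the paper itself takes on faith, and it has the merit of making the reduction explicit where the paper gives only a citation.
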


In response to an inquiry of the authors, Thomas Schlumprecht \cite{OS'} indicated that the method that was used to prove Theorem \ref{thm:OS'} in \cite{OS} (where the statement is proved first in another Banach space known as the Schlumprecht space, and then transferred to $\ell_2$), can be adapted to show that $\dim V = \infty$ may be replaced by $\dim V =2$ in (i). However, he indicated recently that some obstruction had appeared. Nevertheless, we would like to present here how the aforementioned strengthening of Theorem \ref{thm:OS'} implies that $\s_S$ is not weakly indivisible. 

\begin{thm}
\label{thm:OS}
Assume that there is a partition $\s=B\cup R$ and $\varepsilon>0$ such that 
\begin{enumerate}
	\item for every linear subspace $V$ of $\ell_2$ with $\dim V = 2$, $\s \cap V \not\subset (B)_{\varepsilon}$, 
	\item for every linear subspace $V$ of $\ell_2$ with $\dim V = \infty$, $\s \cap V \not\subset (R)_{\varepsilon}$. 
\end{enumerate}

Then $\s_S$ is not weakly indivisible. 
\end{thm}

Consider the partition of $\s$ provided by Theorem \ref{thm:OS}. It should be clear that it induces a partition of $\s_S$. 

\begin{claimm}
\label{claim:notweak}
$\s_S = B \cup R$ witnesses that $\s_S$ is not weakly indivisible. 
\end{claimm}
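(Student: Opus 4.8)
The plan is to use the partition $\s = B \cup R$ furnished by Theorem \ref{thm:OS}, which induces the partition $\s_S = (B \cap \s_S) \cup (R \cap \s_S)$ appearing in the statement; I abusively keep the names $B$ and $R$ for the two induced pieces. Unwinding the definition of weak indivisibility, to witness its failure it suffices to exhibit a single finite $\m{Y} \in \E_S$ that does not embed in $B$, together with the fact that $\s_S$ itself does not embed in $R$. I will extract the finite obstruction from clause (i) and rule out the embedding of $\s_S$ into $R$ from clause (ii).

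First I would dispose of the assertion that $\s_S$ does not embed in $R$. Suppose $\psi : \s_S \to \s$ were an isometric embedding with $\psi(\s_S) \subset R$. Since $\s_S$ is dense in its completion $\s$ and $\s$ is complete, $\psi$ extends to an isometric embedding $\bar\psi : \s \to \s$. Because distances between unit vectors determine inner products (one has $\langle x,y\rangle = 1 - \tfrac{1}{2}d(x,y)^2$), the map $\bar\psi$ preserves inner products and is therefore the restriction to $\s$ of a linear isometry $U$ of $\ell_2$; hence $\bar\psi(\s) = \s \cap V$ where $V := \overline{U(\ell_2)}$ is an infinite-dimensional closed subspace. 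Now $\psi(\s_S)$ is dense in $\s \cap V$ and contained in $R$, so $\s \cap V \subset \overline{R} \subset (R)_{\varepsilon}$, contradicting clause (ii). Thus $\s_S$ does not embed in $R$.

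The finite obstruction is where the work lies. Fix a two-dimensional linear plane $V_0 \subset \ell_2$ and let $C = \s \cap V_0$ be the corresponding great circle. Choose a net $p_1, \dots, p_N \in C$ so fine that $C \subset (\{p_k\})_{\varepsilon/4}$. These points, being coplanar with the origin, are not affinely independent together with $0_{\ell_2}$, so I would perturb them: writing $\tilde p_k = \sqrt{1-\gamma^2}\,p_k + \gamma e_k$ for a small $\gamma > 0$ and pairwise orthogonal unit vectors $e_k \perp V_0$, the $\tilde p_k$ become linearly independent unit vectors lying within $\varepsilon/8$ of the $p_k$ (so $\{0_{\ell_2}\} \cup \{\tilde p_k\}$ is affinely independent, equivalently of circumradius $<1$ by Claim \ref{claim:affind}). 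Using Claim \ref{claim:Schoenberg} together with the density of $S$ in $[0,2]$, I would then round this configuration slightly so that all pairwise distances fall in $S$ while affine independence and $\varepsilon/4$-proximity to the $p_k$ are preserved; the resulting space $\m{Y} = \{y_1,\dots,y_N\}$ lies in $\E_S$ and hence embeds in $\s_S$. To see $\m{Y}$ does not embed in $B$, suppose $\{q_1, \dots, q_N\} \subset B$ is an isometric copy via $y_k \mapsto q_k$. By the inner-product reconstruction there is a linear isometry $U$ of $\ell_2$ with $U y_k = q_k$; setting $C' := \s \cap U(V_0)$ we have $C' = UC \subset (\{U p_k\})_{\varepsilon/4}$ while $\|U p_k - q_k\| = \|p_k - y_k\| < \varepsilon/4$, whence $C' \subset (B)_{\varepsilon/2} \subset (B)_{\varepsilon}$, contradicting clause (i) for the two-dimensional plane $U(V_0)$.

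The main obstacle is precisely this construction of $\m{Y}$: I must simultaneously $\varepsilon$-cover an entire great circle — which on its own forces circumradius $1$ and hence destroys affine independence with the origin — and keep $\{0_{\ell_2}\} \cup \m{Y}$ affinely independent with distances in $S$ so that $\m{Y} \in \E_S$. The perturbation into extra orthogonal dimensions is what reconciles these two demands, and the delicate point is to verify that the perturbed-and-rounded configuration still $\varepsilon$-covers \emph{some} great circle after an \emph{arbitrary} isometric copy is taken; this rests on the rigidity of finite Euclidean configurations (any copy is a single global rotation, by the inner-product reconstruction) combined with the openness and continuity supplied by Claim \ref{claim:Schoenberg}. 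Once this is in place, combining the two parts shows that $\s_S = B \cup R$ witnesses the failure of weak indivisibility.
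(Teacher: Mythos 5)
Your proposal is correct in substance and shares the paper's skeleton: clause (i) of Theorem \ref{thm:OS} is defeated by a finite $\varepsilon$-net of a great circle together with the rigidity fact that any isometric copy of a finite subset of $\s$ is the image of the original under a global linear isometry of $\ell_2$, and clause (ii) blocks copies of $\s _S$ in $R$ because the closure of such a copy is $\s \cap V$ for some infinite-dimensional closed subspace $V$. Where you diverge is in the two auxiliary steps. For the finite obstruction, the paper simply chooses the net points inside $\s _S$ itself, which is dense in $\s$: since every finite subspace of $\s _S$ automatically lies in $\E _S$ (distances in $S$, affine independence with $0_{\ell_2}$), your entire perturbation-and-rounding construction --- the orthogonal push-off $\tilde p_k$, the invocation of Claim \ref{claim:Schoenberg}, the density of $S$ --- is unnecessary. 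Your version also leans on a point that Claim \ref{claim:Schoenberg} does not actually supply: it guarantees that the rounded distance matrix is \emph{realizable} as an affinely independent subset of $\s$, but neither that the realization can be taken pointwise $\varepsilon/4$-close to the $p_k$ (needed for your estimate $\|Up_k - q_k\| < \varepsilon/4$), nor that the circumradius stays $<1$ so that $\m{Y} \in \E _S$; both assertions are true (continuity of the realization at a nondegenerate configuration, continuity of the circumradius), but each needs its own argument, and this delicate point --- which you rightly flag --- is precisely what the paper's density trick dissolves. For the non-embedding into $R$, you prove the paper's auxiliary claim ($\bar{Y} = V \cap \s$) by a genuinely different route: the identity $\langle x,y\rangle = 1 - \tfrac{1}{2}d(x,y)^2$, extension of the embedding to the completion, and the standard fact that an inner-product-preserving map of the sphere is the restriction of a linear isometry. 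The paper instead argues metrically, characterizing geodesic midpoints and antipodes intrinsically and closing $Y$ under great circles; your linear-algebraic argument is shorter and arguably cleaner, while the paper's is self-contained at the level of metric geometry.
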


The proof makes use of the following fact, which we prove for completeness: 

\begin{claimm}
Let $Y \subset \s$ be isometric to $\s$. Then there is a closed linear subspace $V$ of $\ell_2$ with $\dim V = \infty$ such that $Y = V \cap \s$. 
\end{claimm}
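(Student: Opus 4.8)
The plan is to upgrade the \emph{metric} isometry between $Y$ and $\s$ into a \emph{linear} isometry of $\ell_2$, exploiting the fact that on the unit sphere the ambient chordal metric records exactly the inner product. Write $A = \bar{Y}$ for the closure of $Y$ in $\s$. Since $\s$ is complete and closed in $\ell_2$, the set $A$ is the completion of $Y$; as $Y$ is isometric to the complete space $\s$, the hypothesized isometry extends uniquely to a surjective isometry $\phi : \funct{A}{\s}$.

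The first key step is to observe that $\phi$ preserves inner products. Indeed, for any $x, y \in A$ we have $\|x\| = \|y\| = \|\phi(x)\| = \|\phi(y)\| = 1$, so the identity $\langle u, v \rangle = 1 - \tfrac{1}{2}\|u-v\|^2$, valid for unit vectors, combined with $\|\phi(x) - \phi(y)\| = \|x - y\|$, yields $\langle \phi(x), \phi(y)\rangle = \langle x, y\rangle$ for all $x,y \in A$. The standard consequence is that $\phi$ extends to a linear isometry: one defines $\Phi$ on the linear span of $A$ by $\Phi(\sum_i c_i x_i) = \sum_i c_i \phi(x_i)$, and both well-definedness and the isometry property follow at once from the computation $\|\sum_i c_i \phi(x_i)\|^2 = \sum_{i,j} c_i c_j \langle x_i, x_j\rangle = \|\sum_i c_i x_i\|^2$. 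This $\Phi$ then extends continuously to a linear isometry on the closed linear span $V := \overline{\mathrm{span}}(A)$.

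Next I would verify that $\Phi : \funct{V}{\ell_2}$ is surjective. Its image is a closed linear subspace containing $\Phi(A) = \phi(A) = \s$, hence containing $\overline{\mathrm{span}}(\s) = \ell_2$; thus $\Phi$ is a surjective linear isometry, and in particular $\dim V = \infty$ since $V$ is linearly isometric to $\ell_2$. Finally I would establish $A = V \cap \s$. The inclusion $A \subset V \cap \s$ is immediate. Conversely, if $w \in V \cap \s$ then $\|\Phi(w)\| = \|w\| = 1$, so $\Phi(w) \in \s = \phi(A)$; choosing $a \in A$ with $\phi(a) = \Phi(w) = \Phi(a)$ and invoking injectivity of $\Phi$ forces $w = a \in A$. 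This proves the claim with $V = \overline{\mathrm{span}}(\bar{Y})$.

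The only genuinely delicate point is the passage from the purely metric hypothesis to inner-product preservation, together with the verification that the linear extension $\Phi$ is well defined on $\mathrm{span}(A)$; both rest on the elementary but essential remark that on $\s$ the ambient distance and the inner product carry the same information. Everything after that is routine Hilbert-space linear algebra.
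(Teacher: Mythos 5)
Your proof is correct, but it takes a genuinely different route from the paper's. The paper argues entirely inside the sphere: it shows by induction on $n$ that every normalized linear combination $\frac{\sum_{i=1}^n \lambda_i y_i}{\left\|\sum_{i=1}^n \lambda_i y_i\right\|}$ of points of $Y$ again lies in $Y$, the base case $n=2$ resting on the fact that the geodesic midpoint of two non-antipodal points, and the antipode of a point, are metrically definable in $\s$ and hence must be reproduced inside any isometric copy; iterated middle-point arguments then yield whole geodesic segments and great circles, giving that the cone over $Y$ equals the closed linear span. You instead upgrade the metric isometry to a linear one via the identity $\langle u,v\rangle = 1 - \tfrac{1}{2}\|u-v\|^2$ for unit vectors, which shows the isometry preserves inner products; the Gram-matrix computation then makes the linear extension $\Phi$ well defined and isometric on $V = \overline{\mathrm{span}}(\bar{Y})$, and surjectivity plus the identification $\bar{Y} = V\cap\s$ follow routinely (your assertion that the image of $\Phi$ is closed deserves one word of justification: $V$ is complete and $\Phi$ is an isometric embedding, so the image is complete, hence closed). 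Your route is shorter and arguably more robust: it replaces the paper's somewhat sketchy ``usual middle-point-type argument'' by a one-line polarization identity, and it proves strictly more, namely that the given isometry of $Y$ onto $\s$ is the restriction of a surjective linear isometry $\funct{V}{\ell_2}$ --- in effect the Hilbert-space instance of extending sphere isometries to linear maps. The paper's argument, in exchange, is more intrinsically geometric, making explicit which metrically definable configurations (midpoints, antipodes, great circles) force the conclusion. One cosmetic remark: since $Y$ is isometric to the complete space $\s$, it is already closed in $\ell_2$, so $\bar{Y}=Y$ and your extension-to-the-completion step is vacuous (the paper notes this closedness too); it does no harm.
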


\begin{proof}
Consider $V$ the closed linear span of $Y$ in $\ell_2$. Consider also the set $W = \{\lambda y : \lambda \in \R, \ y \in Y\}$. We will be done if we show $V=W$. Clearly, $W\subset V$. For the reverse inclusion, observe that because $Y$ is closed (it is isometric to a complete metric space), the set $W$ is closed. Therefore, it is enough to show that all the finite linear combinations of elements of $V$ that have norm $1$ are in $Y$, i.e. for every $y_1,\ldots,y_n \in Y$ and $\lambda_1, \ldots, \lambda_n \in \R$ such that $\sum_{i=1}^n \lambda_i y_i \neq 0_{\ell_2}$, \[ \frac{\sum_{i=1}^n \lambda_i y_i}{\left\|\sum_{i=1}^n \lambda_i y_i\right\|} \in Y.\]

We proceed by induction on $n$. For $n=2$, we first consider the case $\lambda_1=\lambda_2=1$. Note that $y_1$ and $y_2$ cannot be antipodal (otherwise $y_1+y_2=0_{\ell_2}$), and that $\frac{y_1+y_2}{\left\|y_1+y_2\right\|}$ can be characterized metrically in terms of $y_1$ and $y_2$. For example, it it is the unique geodesic middle point of $y_1$ and $y_2$ in the intrinsic metric on $\s$. Since the intrinsic metric can be defined in terms of the usual Hilbertian metric on $\s$, this point must belong to $Y$. By a usual middle-point-type argument, it follows that the entire geodesic segment between $y_1$ and $y_2$ is contained in $Y$. Using then that $Y$ is closed under antipodality (because $Y$ being isometric to $\s$ any $y\in Y$ must have a point at distance $2$), as well as a middle-point-type argument again, the entire great circle through $y_1$ and $y_2$ is contained in $Y$. That finishes the case $n=2$. Assume that the property is proved up to $n\geq 2$. Fix $y_1,\ldots,y_{n+1} \in Y$ and $\lambda_1, \ldots, \lambda_{n+1} \in \R$. Then writing \[z = \frac{\sum_{i=1}^n \lambda_i y_i}{\left\|\sum_{i=1}^n \lambda_i y_i \right\|} \enspace, \] the vector 
\[ \frac{\sum_{i=1}^{n+1} \lambda_i y_i}{\left\|\sum_{i=1}^{n+1} \lambda_i y_i\right\|} \] is a linear combination of $z$ and $y_{n+1}$ with norm 1. Therefore, it is of the form \[ \frac{\alpha z+\beta y_{n+1}}{\left\|\alpha z+\beta y_{n+1}\right\|}.\]

By induction hypothesis, $z$ is in $Y$. So again by induction hypothesis (case $n=2$), \[ \frac{\alpha z+\beta y_{n+1}}{\left\|\alpha z+\beta y_{n+1}\right\|} \in Y.\]

Therefore, \[ \frac{\sum_{i=1}^{n+1} \lambda_i y_i}{\left\|\sum_{i=1}^{n+1} \lambda_i y_i\right\|} \in Y. \qedhere\]

\end{proof}

\begin{proof}[Proof of Claim \ref{claim:notweak}]
Let $W$ be a linear subspace of $\ell_2$ with $\dim W=2$. By compactness of $\s\cap W$ and denseness of $\s_S$ in $\s$, there is $X\subset \s_S$ finite such that $\s\cap W \subset (X)_{\varepsilon}$. Let $\m{X}$ denote the metric subspace of $\s_S$ supported by the set $X$. Then $\m{X}$ does not embed in $B$ because otherwise, there would be a linear subspace $V$ of $\ell_2$ with $\dim V = 2$ such that $\s \cap V \subset (B)_{\varepsilon}$, violating (i) of Theorem \ref{thm:OS}. On the other hand, $\s_S$ cannot embed in $R$: let $Y\subset \s_S$ be isometric to $\s_S$. Then in $\s$, the closure $\bar{Y}$ of $Y$ is isometric to $\s$. By Claim \ref{claim:notweak}, there is a closed linear subspace $V$ of $\ell_2$ with $\dim V = \infty$ such that $\bar{Y} = V \cap \s$. By (ii) of Theorem \ref{thm:OS}, $\bar{Y} \not\subset (R)_{\varepsilon}$. Therefore $\bar{Y}\not\subset R$.  \qedhere 
\end{proof}

\end{document}